\newcommand{\fdiff}[4]{\ensuremath{ \upsilon^{#4 }_{ #3 }  #1  \left(  #2 \right)  }}
\newcommand{\fdiffplus}[3]{ \fdiff {#1}{#2}{+}{#3} }
\newcommand{\fracvar}[4]{\ensuremath{ \upsilon_{ #3 }^{ #4} \left[ #1 \right] \left(  #2 \right)   }}
\newcommand{\llim}[3]{\ensuremath{ \lim\limits_{ #1 \rightarrow #2} #3 }}
\newcommand{\fclass}[2]{\ensuremath{  \mathbb{#1}^{\, #2} }}
\newcommand{\fcclass}[2]{\ensuremath{  \mathcal{#1}^{\, #2} }}
\newcommand{\gvarplus}[2]{ \fracvar {#1}{#2}{g}{\epsilon+ }}
\newcommand{\gvarpm}[2]{ \fracvar {#1}{#2}{g }{\epsilon\pm}}
\newcommand{\gendiff}[4]{\ensuremath{\mathcal{D}_{#1}^{#2} {#3} \left( #4 \right)  }}
\newcommand{\gdiffplus}[2]{ \gendiff {g}{+}{#1}{#2} }
\newcommand{\gdiffmin}[2]{ \gendiff {g}{-}{#1}{#2} }
\newcommand{\gdiffpm}[2]{ \gendiff {g}{\pm}{#1}{#2} }
\newcommand{\wdiffpm}[2]{ \gendiff {\omega}{\pm}{#1}{#2} }
\newcommand{\holder}[1]{\fclass{H}{#1} }
\newcommand{\osc}[4]{\ensuremath{ \mathrm{osc}_{ #3 }^{ #4} [ #1 ] \left(  #2 \right)   }}
\newcommand{\oscplus}[2]{ \osc {#1}{#2}{\epsilon}{+} }
\newcommand{\oscmin}[2]{ \osc {#1}{#2}{\epsilon}{-} }
\newcommand{\oscpm}[2]{ \osc {#1}{#2}{\epsilon}{\pm} }
\newcommand{\osci}[1]{ \ensuremath{\underset{#1}{\mathrm{osc}}\,  } }
\newcommand\smallO{
	\mathchoice
	{{\scriptstyle\mathcal{O}}}
	{{\scriptstyle\mathcal{O}}}
	{{\scriptscriptstyle\mathcal{O}}}
	{\scalebox{.7}{$\scriptscriptstyle\mathcal{O}$}}
}
\newcommand{\bigoh}[1]{ \ensuremath{ \smallO \left(  #1 \right) }   }
\newcommand{\bigohx}[1]{ \ensuremath{ \smallO_{  #1} }   }
\newcommand{\deltaop}[4]{\ensuremath{ \Delta_{ #3 }^{ #4} \left[ #1 \right] \left(  #2 \right)   }}
\newcommand{\deltaplus}[2]{ \deltaop {#1}{#2}{\epsilon}{+} }
\newcommand{\deltamin}[2]{ \deltaop {#1}{#2}{\epsilon}{-} }
\newcommand{\deltapm}[2]{ \deltaop {#1}{#2}{\epsilon}{\pm} }
\newcommand{\rref}[1]{ \ensuremath{\mathrm{\ref{#1}} }}
\newcommand{\modcont}[2]{\ensuremath{\omega_{#1} \left( #2 \right)   }}
\newcommand{\predf}[2]{\ensuremath{\mathcal{ #1} \left[ #2 \right]  }}
\newcommand{\partd}{\ensuremath{\mathcal{ P} }}
\newcommand{\tth}{\textsuperscript{th}\;}
\newtheorem{theorem}{Theorem}
\newtheorem{lemma}{Lemma}
\newtheorem{corollary}{Corollary}
\newtheorem{proposition}{Proposition}
\newtheorem{condition}{Condition}
\newtheorem{definition}{Definition}
\newtheorem{remark}{Remark}
\newtheorem{example}{Example}
\newcommand{\ubar}[1]{\underaccent{\bar}{#1}}
\newcommand{\bigohone}[1]{ \ensuremath{ \smallO_{  #1 }  }   }
\newcommand{\funct}[3]{ \ensuremath{ #1 : \mathbb{ #2} \mapsto \mathbb{ #3}    }}
\begin{document}

 	
 	\title{Local generalizations of the derivatives on the real line}
 	
 	\author{Dimiter Prodanov}
	\address{EHS, Kapeldreef 75, IMEC, 3001, Leuven, Belgium}




  

\begin{abstract}
		From physical perspective, derivatives can be viewed as mathematical idealizations of the linear growth. 
		The linear growth condition has special properties, which make it preferred.
		The manuscript investigates the general properties of the local generalizations of derivatives assuming the usual topology of the real line.  
		The concept of derivative is generalized in terms of the class of the modulus of continuity of the primitive function.
		This definition is suitable for applications involving continuous but possibly non-absolutely continuous functions of a real variable. 
		The main application of the approach is the generalization of the Lebesgue monotone differentiation theorem.
		On the second place, the conditions of continuity of generalized derivative are also demonstrated.	
\end{abstract}
	

\maketitle

Keywords: non-differentiable functions ; singular functions ; moduli of continuity; H\"older classes

  1) Environment, Health and Safety, IMEC, Belgium 2) MMSDP, IICT, BAS, Bulgaria

Correspondence Address: IMEC vzw, Kapeldreef 75, 3001 Leuven, Belgium 
	\section{Introduction}
	\label{sec:intro} 
	
	Since the time of Newton, it is accepted that celestial mechanics and physical phenomena are, by and large, described by smooth and continuous functions.
	The second law of Newton demands that the velocity is a differentiable function of time. 
	This ensures mathematical modelling in terms of differential equations, and hence (almost everywhere) differentiable functions. 
	Ampere even tried to prove that all functions are almost everywhere differentiable. 
	Now we know that this attempt was doomed to fail.

	Various non-differentiable functions have been constructed in the XIX\tth  century and regarded with a mixture of wonder and horror.	
	The interest in fractal and non-differentiable functions was rekindled with the works of Mandelbrot in fractals \cite{Mandelbrot1982}. 
	For example, Fonf et al. (1999)  have established that there is a closed, infinite-dimensional subspace of $ C[0, 1]$ consisting of nowhere differentiable functions\cite{Fonf1999}. 
	This existence result was extended with constructive proofs in \cite{Girgensohn2001, Berezhnoi2003}. 
	
	Scientific developments in the last 50 years indicate that
	the use of non-differentiable functions can not be avoided when modelling nature. 
	For instance, it is easy to establish that stochastic paths of the classical Wiener process are non-differentiable.
	Applications of this process are ubiquitous in physics, biology and economy. 
	On a second place, some physical theories also consider non-differentiable functions. 
	The stochastic interpretation of Quantum Mechanics, introduced by Nelson  \cite{Nelson1966}, assumes a reversible sub-quantum Brownian motion (i.e. reversible Wiener process), having non-differentiable trajectories. 
	In a closely related manner, almost all, in the measure sense, paths in the formulation of the Feynman path-integral are non-differentiable \cite{Feynman1948}.  
	The deterministic approach of scale relativity theory, introduced by Nottale \cite{Nottale2010} also assumes non-differentiability of the fundamental space-time manifold. 
	The Ornstein--Uhlenbeck (OU) process was introduced in the kinetic theory of gasses \cite{Uhlenbeck1930}. 
	In this process, the particle velocities are non-differentiable.  
	The OU process arises as the scaling limit of the Ehrenfest urn model which describes the diffusion of particles through a permeable membrane.

 	Purely mathematically, the derivatives can be generalized in several ways. 
 	Derivatives can be defined in the usual way as limits of difference quotients on the accumulation sets of points \cite[ch 3, p. 105]{Milanov1977}.
 	This approach can be applied also to functions defined on fractal sets \cite{Yang2015}. On the other hand, the question of continuity on intervals of so-defined functions requires further specification. 
 	
 	If continuity is perceived as an essential property such generalization leads to various integro-differential operators.
 	The best known examples here are the Riemann-Liouville and Caputo operators.
 	However, such operators lead to non-local (interval) functions.
 	Application of a subsequent limiting localization operation can lead to a local operator.
 	An example of this is the local fractional derivative introduced by Kolwankar and Gangal \cite{Kolwankar1997}:
 	\[
  \mathcal{D}_{KG}\, f (x) :=	\llim{x}{a}{}\frac{1}{\Gamma(1- \beta)}  \frac{d}{dx}  \int_{a}^{x}\frac{  f (t ) }{{\left( x-t\right) }^{\beta }}dt 
 	\]
 	However, such localization can only lead to trivially continuous functions -- that is -- the result of the localization is zero where the derivative function is continuous \cite{Prodanov2018}. 
	
	Point-wise, the derivatives can be generalized by formal "fractionalization" -- i.e. by the substitution $\epsilon \to \epsilon^\beta$  leading to 
	\[
  \fdiffplus{ f}{x}{\beta}:=\llim{\epsilon}{0}{\frac{f(x+ \epsilon) - f(x) }{\epsilon ^\beta}} 
	\] 
	The quantity in this expression is called \textit{fractional velocity}.
	Such an approach has been considered for the first time by the mathematicians Paul du Bois-Reymond and Georg Faber in their studies of the point-wise differentiability of functions \cite{BoisReymond1875, Faber1909}. 
	In the late  XX\tth century, the physicist Guy Cherbit introduced the same quantity  under the name \textsl{fractional velocity} by analogy with the Hausdorff dimension as a tool to study the fractal phenomena and physical processes for which  instantaneous velocity was not well-defined \cite{Cherbit1991}. 
	The properties of fractional velocity have been extensively studied in \cite{Adda2001} and \cite{Prodanov2017}. 
	The special choice of the function $\epsilon^\beta$ can be justified from the theory of the  fractional calculus as the limit of the regularized Riemann-Liouville differ-integral (see above). 
	  
	As can be expected, the overlap of the definitions of the Cherebit's fractional velocity and the Kolwankar-Gangal local fractional derivative is not complete. 
	The precise equivalence conditions have been established elsewhere  \cite{Adda2013, Prodanov2018}. 
	Both definitions are closely linked with conditions for the growth of the functions.
	Notably, Kolwankar-Gangal fractional derivatives are sensitive to the critical local H\"older exponents, while the fractional velocities are sensitive to the critical point-wise H\"older exponents and there is no complete equivalence between those quantities \cite{Kolwankar2001}.  
	
	In contrast to physical applications, mathematically, there is no reason to limit the choice of the function in the denominator of the difference quotient to a power function. 
	In such way, more diverse limit objects generalizing derivatives can be studied.
	Such is the objective of the present paper. 
	Here, derivatives are generalized in terms of the class of the \textit{modulus of continuity} of the primitive function.
	Such definition focuses on applications involving continuous but possibly non-absolutely continuous functions of a real variable. 
	
	The paper is structured  as follows:
	Section \ref{sec:nullset} discusses some properties of totally disconnected sets.
	Section \ref{sec:osc} introduces point-wise oscillation of functions. 	
	Section \ref{sec:contmod} characterizes some of the properties of the moduli of continuity.
	Section \ref{sec:genderivomega} introduces the concept of generalized $\omega$-derivatives, defined from the maximal modulus of continuity. 
	Section \ref{sec:contderiv} discusses the continuity sets of derivatives form the perspective of the theory developed in Sec. \ref{sec:osc} and \ref{sec:contmod}. 
	Section \ref{sec:genderiv} introduces the concept of modular derivatives. 

	Notational conventions (see Appendix \ref{sec:definitions}) follow previously published works and are repeated here for convenience \cite{Prodanov2017,Prodanov2018, Prodanov2019}. 
	The notation $|I|$ for an interval $I$ will mean its length.
	 
	\section{Totally disconnected sets}\label{sec:nullset}	
	 The following definition is given in Bartle (2001)\cite[Part 1, Ch. 2]{Bartle2001} and Silva (2007) \cite[Ch. 2]{Silva2007}:
	 \begin{definition}[Null sets]\label{def:nullsetf}
	  A \textbf{null set} $Z \subset \fclass{R}{}$ (or a set of measure 0) is called a set, such that for every $0<\epsilon <1$ there is a countable collection of sub-intervals $\left\lbrace I_k \right\rbrace^\infty_{k=1}  $,  such that
	 \[
	 	Z \subseteq \bigcup\limits_{k=1}^{\infty} I_k , \quad  \sum\limits_{k=1}^{\infty} |I_k| \leq \epsilon 
	 \]
	 where $|.|$ is the interval length. 
	 Then we write $|Z|=0$.
	 \end{definition} 
 
 	\begin{remark}
 		The next statement is a generalization of a well-known property of  countable sets (see for example \cite[Ch. 2]{Silva2007}). The result is given here for completeness of the subsequent presentation. 
 	 	\end{remark}

	 \begin{definition}[Totally disconnected space]
	A metric space \textit{M} is totally disconnected if every non-empty connected subset
	of \textit{M} is a singleton \cite[p. 210]{Willard2004}.
	That is, for every $S \subset M,$ \textit{S} non-empty and connected
	implies $ \exists p \in M$ with $S = \{p\}$.
	 \end{definition}

	 \begin{theorem}[Null set disconnectedness]\label{th:disconect}
	 	Suppose that $E$ is a null set.  Then $E$ is totally disconnected.
	 	Conversely, suppose that $E$ is totally disconnected and countable.
	 	Then $E$ is a null set.
	 \end{theorem}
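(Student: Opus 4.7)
The theorem splits naturally into two directions, and in fact the total disconnectedness hypothesis in the converse is redundant: any connected subset of $\mathbb{R}$ that contains two distinct points is a non-degenerate interval, hence uncountable, so any countable subset of $\mathbb{R}$ is automatically totally disconnected. Thus only the null versus countable statements require real work.

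For the forward direction, I would suppose $E$ is null, let $S \subseteq E$ be non-empty and connected, and argue by contradiction. Since connected subsets of $\mathbb{R}$ are intervals, if $S$ contained two distinct points $a < b$ we would have $[a,b] \subseteq E$. Fix $\epsilon$ with $0 < \epsilon < \min(1,(b-a)/2)$, and by Definition \ref{def:nullsetf} choose sub-intervals $\{I_k\}$ covering $E$ with $\sum_k |I_k| \le \epsilon$. Enlarge each $I_k$ to an open interval $J_k \supseteq I_k$ of length at most $|I_k| + \epsilon/2^k$; this yields an open cover of the compact set $[a,b]$ with total length at most $2\epsilon$. Heine--Borel extracts a finite subcover, and the standard estimate $b - a \le \sum_k |J_k|$ for a finite open cover of an interval gives $b - a \le 2\epsilon < b - a$, a contradiction. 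Hence $S$ is a singleton, and $E$ is totally disconnected.

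For the converse, enumerate $E = \{x_k\}_{k \ge 1}$ (or as a finite list). Given $0 < \epsilon < 1$, take $I_k := \bigl[x_k - \epsilon/2^{k+2},\; x_k + \epsilon/2^{k+2}\bigr]$; then $E \subseteq \bigcup_k I_k$ and $\sum_k |I_k| = \epsilon/2 < \epsilon$, so $E$ satisfies Definition \ref{def:nullsetf}.

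The main obstacle, and the only non-routine step, is the compactness reduction in the forward direction: the covering provided by Definition \ref{def:nullsetf} need not consist of open intervals, so one must first pad each $I_k$ into a slightly larger open interval $J_k$ (absorbing the extra length into a geometric series) before invoking Heine--Borel. This is the standard device for showing that a non-degenerate interval is not a null set; once it is in place, everything else is bookkeeping.
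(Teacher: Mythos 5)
Your proof is correct, and both directions follow the same overall strategy as the paper (contradiction via a non-degenerate subinterval for the forward claim; a geometric covering of an enumeration for the converse), but you handle the crucial step of the forward direction more carefully than the paper does. The paper simply asserts that a connected subset containing two points yields an $\epsilon$ with $\epsilon < |Z| \leq |E|$, ``which is a contradiction'' --- implicitly invoking monotonicity of measure and the fact that a non-degenerate interval is not null, neither of which is justified from Definition \ref{def:nullsetf} alone. Your padding of the covering intervals into open intervals $J_k$ with the extra length absorbed into a geometric series, followed by Heine--Borel and the finite-subcover length estimate, is exactly the missing argument; it is the standard and correct way to show $[a,b]$ is not null, and it makes the forward direction airtight where the paper's version is only a sketch. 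Your converse is also cleaner: the paper's construction fixes interval lengths $|I_k| = 1/2^k$ summing to an arbitrary $h$ via a somewhat confusing disjointness setup, whereas you simply enumerate the countable set and cover it with intervals of total length $\epsilon/2$. Your observation that the total-disconnectedness hypothesis in the converse is redundant (any countable subset of $\fclass{R}{}$ is automatically totally disconnected, since non-degenerate connected subsets of the line are uncountable intervals) is a genuine sharpening that the paper does not make.
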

	 \begin{proof}
	 	\begin{description}
	 		
 		\item[Forward statement]  
	 	Suppose that $Z \subset E$ is connected and open. 
	 	Then there exist 3 numbers $x_1 < z < x_2$, such that $[x_1, x_2] \subset Z$.
	 	Then $|[x_1, x_2] | = x_2 - x_1 >0$.
	 	Therefore, $\exists\epsilon$, such that $ 0< \epsilon \leq z - x_1 < x_2 - x_1$; so that
	 	$\epsilon < |Z| \leq | E|$, which is a contradiction.
	 	Therefore, $x_2=x_1$ and hence $Z$ is singleton. 
	 	Therefore, by induction $E$ is totally disconnected.  
	 	
	 	\item[Converse statement]
	 	
	 	The countability requirement in the statement of the theorem comes from the fact that there are sets that are totally disconnected, uncountable and non-null  \cite{Kantor2015, Flood2011} (see   example \ref{ex:SVCset}).
	 	Since $E$ is totally disconnected for every $z, w \in E$, trivially, there is a number $h$, such that  
	 	$[z-h/2, z+ h/2] \cap [w-h/2, w+ h/2] =\emptyset  $.
	 	Therefore, there is a  collection of such intervals,
	 	$ \{I_k\}_{k=1}^{\infty}$ 
	 	 $$I_k=[z_k -h/2^{k+1},   z_k +h/2^{k+1} ]$$
	 	 of length $|I_k|=1/2^k$.
	 	Therefore,
	 	$$\sum\limits_{k=1}^{\infty} |I_k| = h$$
	 	 for any such a number $h$.
	 	 Since $h$ can be chosen arbitrarily small the claim follows.
	 	 
	 	 	\end{description}
	 \end{proof}
	It should be noted that not all totally disconnected sets are null.
	There are totally disconnected uncountable sets of positive measure.   
	For example, the  Smith--Volterra--Cantor set is of Lebesgue measure $1/2$. 
 	\begin{example}\label{ex:SVCset}
 		The construction of the Smith--Volterra--Cantor set is given as follows \cite[p.15]{Kantor2015}:
 		The set is constructed by iteratively removing certain intervals from the unit interval $I_0=[0, 1]$.
 		At each step $k$, the length that is removed $p_{k+1}= p_k/4$ from the middle of each of the remaining intervals.
 		That is, starting from $I_0$  and $p_0=1/4$
 	    on every step
 		\begin{flalign*}
 		I_k=[u, v]  & \longrightarrow I_{k+1}^l= \left[ u, (u+v)/2-p_{k}/2  \right], \quad
 		I_{k+1}^r= \left[  (u+v)/2+p_{k}/2, v  \right] \\
 		p_k & \longrightarrow p_{k+1}= p_k /4
		\end{flalign*}
		For example, 
		 \[
		 \begin{array}{ll}
		 k = 1 : &
		 I_1=\left[ 0,\frac{3}{8}\right], \ I_2 = \left[ \frac{5}{8},1\right] \\
		 ~\\
		 k = 2 : &
		 I_{21}=\left[0,\frac{5}{32}\right], \ I_{22}=\left[\frac{7}{32},\frac{3}{8}\right],
		 I_{23}=\left[\frac{5}{8},\frac{25}{32}\right], \ 
		 I_{24}=\left[\frac{27}{32},1\right] \\
		 ~\\
		 		k = 3 : & 
		 I_{31}=\left[0,\frac{9}{128} \right], \ I_{32}=\left[\frac{11}{128},\frac{5}{32}\right],
		 \ I_{33}=\left[\frac{7}{32},\frac{37}{128}\right], \ I_{34}=\left[\frac{39}{128},\frac{3}{8}\right], \\
		 ~\\
		 &\ I_{35} =\left[\frac{5}{8},\frac{89}{128}\right],\ I_{36}=\left[\frac{91}{128},\frac{25}{32}\right], \ 
		 I_{37}=\left[\frac{27}{32},\frac{117}{128}\right], \ 
		 I_{38}=\left[\frac{119}{128},1\right]	
		 \end{array}
		 \]
	  During the process, disjoint intervals of total length
	  \[
	  L=\sum\limits_{k=0}^{\infty} \frac{1}{4 \, .\, 2^k } = \frac{1}{2}
	  \]
	  are removed so that the resulting set is of measure $1/2$.
	  The Smith--Volterra--Cantor set is closed as it is an intersection of closed sets.
	  Furthermore, at step $n$ the length of each closed subinterval is $l_n=\frac{1}{2} \left(l_{n-1}- p_{n-1} \right) $.
	  Starting from $l_0=1$ one gets  
	  $$
	  l_n=\frac{1}{2}\left(\frac{1}{2^{n}}+\frac{1}{4^{n}} \right) 
	  $$
	  Therefore, by the Nested Interval theorem the SVC set is totally disconnected and contains no intervals.
	  The SVC set was used as an example in \cite{Prodanov2019}.
 	\end{example}
 	The set presented in the above example can be used to construct a singular function, resembling by some of its properties the famous "Devil's staircase" function (see  \cite{Prodanov2019}). 
 
	 \section{Interval and Point-wise Oscillation of Functions}
	 \label{sec:osc}
	 
	 The concept of point-wise oscillation can be used to characterize the set of continuity of a function.
	 This can be done in a way similar to the approach presented as theorem 3.5.2 in Trench \cite{Trench2013}[p. 173]. 
	 This is the so-called Oscillation lemma \cite{Prodanov2017,Prodanov2019}. Since it was published before, the statement of the lemma is relegated to an Appendix. 
	 \begin{definition}[Oscillation]
	 	\label{def:limosc1}
	 	Define the oscillation of the function \textit{f} in the interval $J=[a, b]$ as
	 	\[
	 	\osci{J} f:= \sup_{J} f - \inf_{J} f
	 	\]
	\end{definition}
	 \begin{definition}[Directed Oscillation]
	 	\label{def:limosc}
	Define the directed oscillations as 
   	(i) the forward oscillation:
	\begin{flalign*}
		\oscplus{f}{x}  : =  & \sup_{[x , x + \epsilon]} { f} -	\inf_{[x , x + \epsilon]} { f}, \quad  I=[x , x + \epsilon]
	\end{flalign*}
	and the backward oscillation:
	\begin{flalign*}
			\oscmin{f}{x}   : =  & \sup_{[x - \epsilon , x ]} {f} - \inf_{[x -   \epsilon, x ]} { f}, \quad  I=[ x - \epsilon, x]
	\end{flalign*}
	Finally, define the limits, if such exist as finite numbers, as
	\begin{flalign*}
	\mathrm{osc^{+}} [f] (x) : = &  \llim{|I|}{0}{} 
	\left( \sup_{I} f - \inf_{I} f \right), \quad  I=[x , x + \epsilon] \\
			\mathrm{osc^{-}} [f] (x) : = &    \llim{|I|}{0}{} 
	\left( \sup_{I} f - \inf_{I} f\right), \quad  I=[ x - \epsilon, x]
	\end{flalign*}
	according to previously introduced notation \cite{Prodanov2017,Prodanov2018}.
	 \end{definition}

	The Oscillation lemma is of a fundamental importance for it opens up the possibility to characterize the discontinuity of functions in terms of their oscillation at a given point.
	The oscillation of a function can be viewed in two ways: 
	as a functional having the interval of study fixed; 
	or, alternatively, as a function of the interval having the function under study fixed.
	There is no ambiguity as in fact both aspects are complementary as will be demonstrated.
	
	\begin{definition}[super/sub-additivity on an interval]\label{def:supadd}
		A function $f$ is called sub-additive on the interval $I=[x, x+ \epsilon]$ if
		\[
		f(x+ a) +f(x+b) \geq f(x+ a+b), \quad a,b \in [0, \epsilon] 
		\]
		The converse holds for super-additivity 
		\[
		f(x+ a) +f(x+b) \leq f(x+ a+b) 
		\]
	\end{definition}
	\begin{example}
		$f(x)=x^3$ is sub-additive in $ (-\infty, 0)$ and super-additive in  $(0, \infty)$.
		Let $a,b>0$. Then
		\[
		a^3 +b^3 \leq (a+b)^3
		\]
		since
		\[
		0 \leq 3\, ab (a+b)
		\]
		and hence super-additivity follows on the positive real axis.  
	\end{example}
	The above definition allows one to establish some properties of the oscillation.
	The subsequent lemma is useful for that purpose:
	
	\begin{lemma}\label{th:superaddtive}
		Let $f$ be a non-decreasing function on $I=[x, x+ \epsilon]$.
		If $f$ is also super-additive on $I$ then 
		\[
		\sup_{ A} f + \sup_{ B} f \leq \sup_{ I} f
		\]
		for $   A=[x,x +a] \subset I $, $B=[x,x +b] \subset I$, $A \cup B= I$.
		
		Conversely, if $f$ is increasing and sub-additive on $I$ then 
		\[
		\sup_{ A} f + \sup_{ B} f \geq \sup_{ I} f
		\]
	\end{lemma}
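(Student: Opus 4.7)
My plan is to reduce the statement about suprema to a statement about values of $f$ by exploiting monotonicity, and then quote the super-additivity (respectively sub-additivity) hypothesis directly at the endpoints that arise. The argument is short; the real work is correctly parsing the geometric configuration of $A$, $B$, and $I$.

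First, I would invoke the non-decreasing hypothesis to observe that on any subinterval $J=[x,x+t]\subseteq I$ one has $\sup_J f = f(x+t)$. Applied to $A$, $B$, and $I$, this gives $\sup_A f = f(x+a)$, $\sup_B f = f(x+b)$, and $\sup_I f = f(x+\epsilon)$. This single identity is the only use of monotonicity, and it converts the inequality to be proved into a relation between three pointwise values of $f$.

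Second, I would appeal to the decomposition $A \cup B = I$ to pin down the relationship between $a$, $b$, and $\epsilon$. Reading $B$ as the right complement of $A$ inside $I$ (so that $a+b=\epsilon$ and the two subintervals abut at $x+a$), the super-additivity hypothesis applied to the increments $a$ and $b$ yields
\begin{equation*}
f(x+a) + f(x+b) \;\leq\; f(x+a+b) \;=\; f(x+\epsilon).
\end{equation*}
Substituting the identities from the previous step gives $\sup_A f + \sup_B f \leq \sup_I f$, which is the claim. The sub-additive direction is verbatim identical with every inequality reversed, together with the observation that an increasing $f$ still satisfies $\sup_J f = f(\text{right endpoint of }J)$.

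The main obstacle, as I see it, is essentially bookkeeping: one must align the points at which super-additivity is evaluated with the right endpoints of $A$ and $B$ in a manner consistent with the condition $A\cup B = I$. Because super-additivity requires $a+b$ to lie in $[0,\epsilon]$, the crux is to read the union hypothesis as $a+b=\epsilon$ (with $B$ understood as the complementary segment to $A$), after which the monotonicity of $f$ collapses each supremum to a single value of $f$ and the conclusion is immediate. No estimate on the oscillation, and no limiting argument, is needed.
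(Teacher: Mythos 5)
Your proof is correct and follows essentially the same route as the paper's: monotonicity collapses each supremum to the value of $f$ at the right endpoint of its interval, and super-additivity (resp.\ sub-additivity) applied to the increments $a$ and $b$ with $a+b=\epsilon$ gives the inequality directly. The paper's own proof dresses this up with auxiliary points $a',b',c'$ at which the suprema are attained, but after simplification it reduces to the same two lines, and your reading of the hypothesis $A\cup B=I$ as $|A|+|B|=|I|$ matches the paper's implicit convention $I=[x,x+a+b]$.
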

	\begin{proof}
		Let $I=[x, x+a+b]$, $A=[x,x +a]$, $B=[x,x +b]$.
		Let $f$ be super-additive and non-decreasing on $I$.
		Consider the following table of values
		\[
		\begin{matrix}{}
		a^\prime \leq & a \leq & b^\prime < & b  < & c^\prime < & a+b \\
		\sup_{ A} f = f(a^\prime) \leq & f(a) \leq & \sup_{ B} f =f (b^\prime) \leq  & f (b) \leq & \sup_{ I} f = f(c^\prime) \leq & f(a+b)
		\end{matrix}
		\]
		By the non-decreasing property
		\[
		M:= f(a^\prime) + f (b^\prime) - f(c^\prime) \leq f(c^\prime)
		\]
		Then also
		\[
		M \leq f(a) + f (b) - f(c^\prime) \leq  f(a + b) - f(c^\prime) \leq 0
		\]
		by super-additivity. 
		Therefore, 
		\[
		\sup_{ A} f + \sup_{ B} f \leq \sup_{ I} f
		\]
		
		Let $f$ be sub-additive and increasing on $I$.
		It follows also that
		\[
		M \geq f (a^\prime+b^\prime) - f (c^\prime) 
		\]
		However, since the function is bounded $a=a^\prime$, $b=b^\prime$, $c^\prime= a+b$. Therefore, $M \geq 0$ and
		\[
		\sup_{ A} f + \sup_{ B} f \geq \sup_{ I} f
		\]	
	\end{proof}
	
	\begin{example}\label{ex:1}
		Let $f(x)= x^2$ and $I=[0, a+b]$, $0<a<b$.
		Then $\sup_{ A} f=a^2$, $\sup_{ B}=b^2$, $\sup_{ I} f = (a+b)^2$ and
		\[
		a^2+b^2 \leq (a+b)^2
		\]
		which is true.
		
		Let $f(x)= \sqrt{x}$. Then $\sup_{ A} f=\sqrt{a}$, $\sup_{ B}=\sqrt{b}$, $\sup_{ I} f = \sqrt{a+b}$ and
		\[
		\sqrt{a} + \sqrt{b} \geq \sqrt{a+b}
		\]
		which is true.
	\end{example}
	\begin{lemma}\label{th:convsuperadd}
		Consider a function, which is sub-additive on $I=[x,x +\epsilon]$. 
		Then $f$ is concave on $I$. That is, for $0\leq \lambda \leq 1$
		\[
		f (\lambda a+ \left(1-  \lambda \right)  b) \geq \lambda \; f (a) + \left(  1 - \lambda \right)   f (b)
		\]
		holds for any $ a+b \leq \epsilon,a,b>0$.
		Conversely, if $f$ is super-additive on $I$ then it is convex:
		\[
		f (\lambda a+ \left(1-  \lambda \right)  b) \leq \lambda \; f (a) + \left(  1 - \lambda \right)   f (b)
		\]
	\end{lemma}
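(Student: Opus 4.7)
The plan is to deduce the concavity inequality in the sub-additive case by a two-stage strategy: establish Jensen's inequality at the midpoint $\lambda=1/2$, iterate to every dyadic rational in $[0,1]$, and finally pass to an arbitrary $\lambda$ through a regularity argument. The convex (super-additive) case will follow by the same scheme with every inequality reversed.

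I would first pass to the shifted function $g(t):=f(x+t)$ on $[0,\epsilon]$, which recasts the sub-additivity hypothesis as $g(a+b)\le g(a)+g(b)$ and the concavity conclusion as $g(\lambda a+(1-\lambda)b)\ge \lambda g(a)+(1-\lambda)g(b)$ for $a+b\le \epsilon$. This isolates the algebra from the base point and puts both hypothesis and conclusion in their usual single-variable form on $[0,\epsilon]$, allowing the standard Jensen-style machinery to be applied.

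The midpoint step is the crux. Setting $c=(a+b)/2$, sub-additivity supplies two facts pointing in opposite directions: applied to $(c,c)$ it gives $g(2c)\le 2g(c)$, and applied to $(a,b)$ it gives $g(a+b)\le g(a)+g(b)$. These yield $g(c)\ge g(a+b)/2$ and $g(a+b)/2\le (g(a)+g(b))/2$, which do not directly combine to the target bound. Consequently the midpoint concavity must draw on an additional property---typically the monotonicity of $g$ coupled with $g(0)=0$ or a non-negativity hypothesis---which is natural in the oscillation-theoretic context of Lemma~\ref{th:superaddtive} and for the moduli of continuity to which the lemma will later be applied. I would make these implicit assumptions explicit before completing the comparison, likely by exploiting $g(b)-g(a)\le g(b-a)$ together with $2g((b-a)/2)\ge g(b-a)$ to control the forward increment $g(c)-g(a)$ from below by $(g(b)-g(a))/2$.

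Once midpoint concavity is in hand, iterating on dyadic rationals $k/2^n$ produces Jensen's inequality at a dense set of $\lambda$, and extension to arbitrary $\lambda\in[0,1]$ follows by monotonicity of $g$ (or by continuity of the underlying modulus of continuity, if that framework is in force). The super-additive case runs the same scheme verbatim with every inequality reversed, and the main obstacle is again the midpoint step, which requires the dual implicit regularity. Pinning down the precise auxiliary hypothesis on $f$ that rescues the midpoint inequality from the ambiguous combination above is thus the central task of the proof.
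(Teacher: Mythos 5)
Your proposal does not prove the lemma: the midpoint inequality, which you correctly identify as the crux, is left open, and the specific rescue you sketch cannot close. From sub-additivity you get $g(b)-g(a)\le g(b-a)$ and $g(b-a)\le 2g((b-a)/2)$, hence $(g(b)-g(a))/2\le g((b-a)/2)$; but to conclude $g(c)-g(a)\ge (g(b)-g(a))/2$ for $c=(a+b)/2=a+(b-a)/2$ you would then need $g(a+(b-a)/2)\ge g(a)+g((b-a)/2)$, which is \emph{super}-additivity of the increment --- the reverse of your hypothesis. Sub-additivity only yields the upper bound $g(c)\le g(a)+g((b-a)/2)$, so the chain points the wrong way and the dyadic/midpoint route stalls at its very first step; the ``implicit auxiliary hypothesis'' you defer to is never exhibited. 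Your instinct that something extra is needed is sound, and in fact no such hypothesis of the kind you name (monotonicity, $g(0)=0$, non-negativity) suffices: $f(t)=t+|\sin(Nt)|/N$ is continuous, non-decreasing, non-negative, vanishes at $0$ and is sub-additive on $[0,\epsilon]$, yet for $N>2\pi/\epsilon$ it fails midpoint concavity at $a=\pi/N-\delta$, $b=\pi/N+\delta$ (with $a+b\le\epsilon$), since $f(\pi/N)=\pi/N<\pi/N+\sin(N\delta)/N=\tfrac12\bigl(f(a)+f(b)\bigr)$. So the implication cannot be recovered from sub-additivity plus these regularity conditions, and a proof along your lines cannot be completed as described.

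For comparison, the paper takes a different route: it first derives the homogeneity-type inequality $f(qa)\le q\,f(a)$ for rational $q\ge 1$ by induction from $f(ka)\le k\,f(a)$, extends it to real scalars, and then obtains concavity by contradiction, decomposing $a=\lambda(a+b)$ and $b=(1-\lambda)(a+b)$ with $\lambda=a/(a+b)$ and arguing that failure of concavity would force $f(a)+f(b)\le f(a+b)$. That is a genuinely different decomposition from your midpoint/dyadic scheme (it never passes through midpoint concavity at all), although it too leans on an unproved passage from rational to real scalars and on applying the scaling inequality for $\lambda\le 1$; if you pursue this lemma further, that is the argument you would need to reconstruct or repair rather than the Jensen iteration.
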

	\begin{proof}
		We prove first the sub-additive case.
		Consider the integer $k \geq 1$. Then for some real $a$ it follows by induction that
		$f( k a ) \leq k \; f(a)  $. 
		Further, suppose that $a= b/k$ for some $b$. Then
		$
		f(  b )/k \leq  f(b/k) 
		$
		so that combining with the previous inequality it follows that for a rational number $q=p/k \geq 1$ :
		$f( q a ) \leq q \; f(a)  $.	
		Let $r=1/q$ and $b =a/r$; then 
		$  f( q a ) \leq q \; f(a) \Longrightarrow  r \; f( r ) \leq   f( b \, r)$ for $ r  \leq 1  $.
		Since $a$ is arbitrary  then  the inequality is valid for any $b>0$.
		
		Letting $a= \lambda/q$, $\lambda \in \fclass{R}{}$ it follows that
		$f(  \lambda )  \leq \lambda/a \; f( a)  $ for $\lambda \geq 1$.
		Since now both variables are real-valued the entire domain becomes real. 
		
		The concavity of $f$ is established as follows:
		Let $ \lambda = a/ (a+b) \leq 1$ and the opposite be assumed true.
		\begin{multline*}
		f \left( a \right) +   f \left( b \right) =   f \left(    \frac{a }{a+b}  \left( a+b \right) \right)  + f  \left( \frac{b  }{a+b} \left( a+b \right) \right) =  \\
		f  \left( \lambda (a+b)\right)  + f  \left( (1-\lambda) (a+b) \right) 
		\end{multline*}
		So that
		\[
		f  \left( \lambda (a+b) \right)  + f  \left( (1-\lambda) (a+b) \right)  \leq \lambda \; f \left(   a+   b \right) + \left(1- \lambda \right) f\left(   a+   b \right)= f\left(   a+   b \right)
		\]
		Therefore, 
		$f \left( a \right) +   f \left( b \right) \leq f\left( a+   b \right)  $, which is a contradiction to the initial hypothesis. Therefore, $f$ is concave on $I$. 
		
		The super-additive case can be proven in the same way and holds by duality. 
	\end{proof}
	The next point is to establish the properties of oscillation.
	The reasoning is symmetric with regard to sub-additive and super-additive functions.	
	\begin{lemma}\label{th:oscsupadd}
		Suppose that \textit{f} is non-negative and super-additive on $I=[x,x+\epsilon]$ and  $\inf_{ I}=0$.
		Then
		\[
		\osci{a}f (x) + \osci{b} f (x) \leq\osci{a+b \leq \epsilon} f
		\]
	\end{lemma}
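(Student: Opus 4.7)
My plan is to exploit monotonicity hidden inside the super-additive hypothesis, so that the three oscillations collapse to values of $f$ at right endpoints, after which the defining inequality of super-additivity delivers the conclusion directly. I read $\osci{a} f(x)$ as $\sup_{[x,x+a]} f - \inf_{[x,x+a]} f$ in accord with Definition \ref{def:limosc}, and $\osci{a+b \leq \epsilon} f$ as the same quantity taken over $[x,x+a+b]$ with $a+b \leq \epsilon$.

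First, I would extract that $f(x) = 0$. Setting $a=0$ in the super-additivity condition $f(x+a) + f(x+b) \leq f(x+a+b)$ gives $f(x) + f(x+b) \leq f(x+b)$, hence $f(x) \leq 0$; combined with $f \geq 0$ this forces $f(x) = 0$, which is consistent with (and identifies the location of) the hypothesis $\inf_{I} f = 0$.

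Next, I would establish that $f$ is non-decreasing on $I$. For $0 \leq s \leq t \leq \epsilon$, apply super-additivity with the splitting $t = s + (t-s)$ to get $f(x+t) \geq f(x+s) + f(x+(t-s)) \geq f(x+s)$, using $f \geq 0$ for the last inequality. Together with $f(x)=0$, monotonicity yields $\inf_{[x,x+a]} f = \inf_{[x,x+b]} f = \inf_{[x,x+a+b]} f = 0$, and the corresponding suprema equal $f(x+a)$, $f(x+b)$, $f(x+a+b)$ respectively.

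Finally, the inequality reduces to $f(x+a) + f(x+b) \leq f(x+a+b)$, which is precisely the super-additivity assumption applied with parameters $a,b$ satisfying $a+b \leq \epsilon$. I do not anticipate a serious obstacle here; the only delicate point is bookkeeping the reduction of infima to $f(x)$, which in turn depends on recognizing that super-additivity plus non-negativity automatically forces both $f(x) = 0$ and monotonicity. Once those two facts are in hand, the oscillation inequality is a literal restatement of the super-additive hypothesis.
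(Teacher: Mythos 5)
Your proof is correct, and it takes a cleaner route than the paper's. The paper argues directly with suprema and infima over nested intervals: it invokes the inequality $\sup_A f + \sup_B f \leq \sup_I f$ from Lemma \ref{th:superaddtive} and then disposes of the infimum terms via $\inf_I f - \inf_A f - \inf_B f \leq 0$, which follows from non-negativity and $\inf_I f = 0$. You instead first extract two consequences of the hypotheses --- that $f(x)=0$ (by taking $a=0$ in the super-additivity inequality) and that $f$ is non-decreasing on $I$ (by splitting $t = s + (t-s)$ and using $f \geq 0$) --- after which every oscillation collapses to the value of $f$ at the right endpoint and the claimed inequality is literally the super-additivity condition for the pair $(a,b)$. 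What your version buys is twofold: it avoids any appeal to Lemma \ref{th:superaddtive}, and it repairs a small logical gap in the paper, since that lemma's supremum inequality is proved under a non-decreasing hypothesis that Lemma \ref{th:oscsupadd} never states explicitly; you show it is in fact forced by non-negativity plus super-additivity. The paper's formulation is slightly more general in spirit (it manipulates $\sup$ and $\inf$ without assuming they are attained at endpoints), but under the stated hypotheses the two arguments prove the same statement and yours is the more self-contained.
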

	\begin{proof}
		Let $A \subset B \subset I$.
		Suppose that \textit{f} is non-negative and super-additive.
		\[
		\osci{A} f + \osci{B} f -\osci{I} f = \sup_{ A} f + \sup_{ B} f -\sup_{ I} f  -\inf_{ A} f - \inf_{ B} f + \inf_{I} f \leq
		\inf_{I} f -\inf_{ A} f - \inf_{ B} f \leq 0
		\]
		Therefore,
		\[
		\osci{A} f + \osci{B} f \leq \osci{I} f
		\]
	\end{proof}
	\begin{lemma}\label{th:oscsuperadd}
		Suppose that \textit{f} is increasing and sub-additive on $I=[x,x+\epsilon]$ and  $\inf_{ I}=0$.
		Then
		\[
		\osci{a}f (x) + \osci{b} f (x) \geq \osci{a+b \leq \epsilon} f
		\]
	\end{lemma}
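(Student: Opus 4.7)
The plan is to mirror the proof of Lemma \ref{th:oscsupadd} while invoking the sub-additive branch of Lemma \ref{th:superaddtive} in place of its super-additive branch. Concretely, set $A=[x,x+a]$ and $B=[x,x+b]$ with $a+b\leq\epsilon$, so that $A,B\subset I=[x,x+\epsilon]$ and both $A$ and $B$ share the left endpoint $x$ with $I$.

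First I would expand the target quantity as
\[
\osci{A} f + \osci{B} f - \osci{I} f \;=\; \bigl(\sup_{A} f + \sup_{B} f - \sup_{I} f\bigr) \;-\; \bigl(\inf_{A} f + \inf_{B} f - \inf_{I} f\bigr).
\]
For the supremum part, since $f$ is increasing and sub-additive on $I$, the sub-additive conclusion of Lemma \ref{th:superaddtive} gives $\sup_{A} f + \sup_{B} f \geq \sup_{I} f$, so the first parenthesis is non-negative.

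For the infimum part, here is where the hypothesis $\inf_{I} f = 0$ is used together with monotonicity: because $f$ is increasing on $I$ and $A,B,I$ all have left endpoint $x$, one has $\inf_{A} f = \inf_{B} f = \inf_{I} f = f(x) = 0$, so the second parenthesis vanishes. Combining the two observations yields $\osci{A} f + \osci{B} f - \osci{I} f \geq 0$, which is the claim with $a+b\leq\epsilon$.

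There is no real obstacle, since the two sub-additive ingredients (the sup inequality of Lemma \ref{th:superaddtive} and the coincidence of the infima under monotonicity and $\inf_{I} f=0$) are already in place; the only point deserving care is justifying that $\inf_{A} f = \inf_{B} f = 0$, which uses both of the hypotheses in an essential way. In particular, without monotonicity one could not deduce the infima are achieved at the common left endpoint $x$, and without $\inf_{I} f = 0$ one could not conclude the infima actually vanish, so the two hypotheses cannot be dropped from the statement.
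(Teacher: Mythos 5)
Your proof is correct and follows essentially the same route as the paper's: both split $\osci{A}f+\osci{B}f-\osci{I}f$ into a supremum part controlled by the sub-additive (increasing) branch of Lemma \ref{th:superaddtive} and an infimum part handled via the hypothesis $\inf_{I} f=0$. If anything, your treatment of the infimum term is the more careful one --- the paper only records $N:=\inf_{I} f-\inf_{A} f-\inf_{B} f\leq 0$, which by itself would not close the argument, whereas you observe that monotonicity forces $\inf_{A} f=\inf_{B} f=\inf_{I} f=f(x)=0$ so that the term vanishes exactly.
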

	\begin{proof}
		Let $A \subset B \subset I$.
		Suppose that \textit{f} is increasing and sub-additive.
		\begin{multline*}
		M:=\osci{A}f + \osci{B} f -\osci{I} f = \sup_{ A} f + \sup_{ B} f -\sup_{ I} f  -\inf_{ A} f - \inf_{ B} f + \inf_{I} f \geq \\
		\inf_{I} f -\inf_{ A} f - \inf_{ B} f =: N
		\end{multline*}
		However, $N \leq 0$, since $\inf_{I} f \leq \inf_{ A} f + \inf_{ B} f$ by hypothesis.
		Therefore, if $\inf_{ I}=0$
		\[
		\osci{A}f + \osci{B} f -\osci{I} f \geq 0
		\]
	\end{proof}
	From this last result it is clear that the condition $\inf_{ I} f=0$ is not  attainable in general.
	Moreover, the use of the \textit{infimum} brings also another point in the interval into consideration,
	for trivially
	\[
		\osc{f}{x}{\epsilon}{ +} = \sup_{u ,v \, \in I, u \neq v} \left| f(u) - f(v) \right| ;
	\]
	therefore, the \textit{oscillation}, defined as above, includes information about two points \textit{u} and \textit{v} in some relation to the point of interest \textit{x}.
	Therefore, another function will be introduced that maintains the desirable properties established in Lemma \ref{th:superaddtive}.

	\subsection{The Point Oscillation Function}\label{sec:oscfunct}
	
	\begin{definition}\label{def:oscfunct}
		Consider a bounded function $f$ defined on a compact interval $I$. 
		Define the left (resp. right ) point oscillation functions as
		\[
		\omega_x^{\pm} (\epsilon): = \left\lbrace 
		\begin{array}{ll}
	 		 \sup_{I} | f(x+ \epsilon) - f (x)| & I=[x , x + \epsilon] \\
			 \sup_{I} | f (x) -f(x- \epsilon)| &  I =[ x - \epsilon, x]   
		\end{array} \right. 
		\]
	\end{definition}
	In such way the directionality information is preserved. 
	It can be established that these quantities are majorized by the oscillation. 
	\begin{proposition}[Majorization of the point oscillation]\label{prop:oscmajor}
		\[
		\osci{I} f \geq \omega^{\pm}_x (\epsilon) \geq |\deltapm{f}{x}|, \quad I=[x, x \pm \epsilon ]
		\]
		$\omega_x$ is a non-decreasing non-negative function. 
	\end{proposition}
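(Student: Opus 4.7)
The plan is to peel off the three assertions of Proposition \ref{prop:oscmajor} one by one, using only Definition \ref{def:oscfunct} and Definition \ref{def:limosc1} and interpreting $\omega^{\pm}_x(\epsilon)$ as the supremum of $|f(y)-f(x)|$ over $y \in I$ (since the point $x$ is fixed, only the other endpoint varies over the interval, and the sup is nontrivial).

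First I would handle the right-hand inequality $\omega^{\pm}_x(\epsilon) \geq |\deltapm{f}{x}|$. Since $x \pm \epsilon \in I$ in each of the two cases, the particular value $|f(x \pm \epsilon) - f(x)|$ is one of the terms over which the supremum in $\omega^{\pm}_x$ is taken; this immediately yields the bound. Next I would prove the left-hand inequality $\osci{I} f \geq \omega^{\pm}_x(\epsilon)$. By Definition \ref{def:limosc1} and the elementary identity
\[
\osci{I} f = \sup_{u,v \in I} |f(u) - f(v)|,
\]
which holds for any bounded real-valued function on $I$, one may specialize $v = x$ (which is an endpoint of $I$, hence lies in $I$) to get
\[
\osci{I} f \;\geq\; \sup_{u \in I} |f(u) - f(x)| \;=\; \omega^{\pm}_x(\epsilon),
\]
and the chain of inequalities is complete.

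For the final assertion, non-negativity is automatic from the absolute value, and monotonicity follows from the monotonicity of the supremum under set inclusion: if $0 < \epsilon_1 < \epsilon_2$, then $[x, x+\epsilon_1] \subseteq [x, x+\epsilon_2]$ (respectively $[x-\epsilon_1, x] \subseteq [x-\epsilon_2, x]$), so the supremum defining $\omega^{\pm}_x(\epsilon_2)$ is taken over a larger set than that defining $\omega^{\pm}_x(\epsilon_1)$, yielding $\omega^{\pm}_x(\epsilon_1) \leq \omega^{\pm}_x(\epsilon_2)$.

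Since all three claims reduce to elementary monotonicity properties of the supremum and of set inclusion, there is no real obstacle. The one point that warrants care is verifying that $x$ belongs to both interval forms $[x, x+\epsilon]$ and $[x-\epsilon, x]$, so that the specialization $v = x$ is legitimate in the oscillation bound; this is evident from Definition \ref{def:oscfunct}.
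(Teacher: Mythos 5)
Your proof is correct and follows essentially the same route as the paper's: all three claims are reduced to elementary monotonicity properties of suprema, with the right-hand inequality obtained by exhibiting $|\deltapm{f}{x}|$ as one term of the supremum and the monotonicity obtained from set inclusion. If anything, your treatment of the left-hand inequality, via $\osci{I} f = \sup_{u,v \in I}\left|f(u)-f(v)\right| \geq \sup_{u \in I}\left|f(u)-f(x)\right|$, is slightly more careful than the paper's, which writes $\osci{I} f = \sup_{I} f - f(x) + \left(f(x)-\inf_{I} f\right)$ under a positivity assumption and identifies $\sup_{I} f - f(x)$ with $\omega^{\pm}_x(\epsilon)$ --- an identification that silently assumes the supremum of $|f(\cdot)-f(x)|$ is attained on the upper side, whereas your specialization $v=x$ covers both cases without any extra hypothesis on $f$.
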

	\begin{proof}
		Suppose that $f$ is positive in \textit{I}.
		\[
		\osci{I} f= \sup_{ I} f - f(x) + \underbrace{f(x) - \inf_{ I} f}_{\geq 0} \geq \sup_{ I} f - f(x)= \omega^{\pm}_x (\epsilon)
 		\]
 		The second inequality is trivial. 
 		The third assertion follows from the properties of the supremum:
 		For $ A \subset B $ implies 
 		$
 		\sup_{ A}f \leq \sup_{ B} f
 		$. 
	\end{proof}
	
	\begin{lemma}[Second Oscillation Lemma]\label{corr:contosc}
		Consider a function \textit{f} continuous in the compact interval $I$ of length \textit{h}.
	 	Then  $ f \cong \predf{C}{I}  \Longleftrightarrow \omega_x (h) \cong \predf{C}{0, h} $ and  $\omega_x (0)=0 $.
	\end{lemma}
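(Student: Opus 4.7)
The plan is to prove each implication separately, making use of the monotonicity and sandwich bounds for $\omega_x$ from Proposition \ref{prop:oscmajor}. The forward direction rests on uniform continuity of $f$ on the compact set $I$; the reverse direction is driven by the lower bound $\omega_x(\epsilon) \geq |f(x+\epsilon) - f(x)|$.

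For the forward direction, assume $f \in \predf{C}{I}$. The equality $\omega_x(0) = 0$ is immediate since the underlying interval collapses to the singleton $\{x\}$. Because $\omega_x$ is non-decreasing on $[0, h]$, continuity reduces to the absence of one-sided jumps. Fixing $\epsilon_0 \in [0, h]$ and $\eta > 0$, uniform continuity of $f$ on $I$ yields $\delta > 0$ with $|f(u) - f(v)| < \eta$ whenever $|u - v| < \delta$. Any point $y$ in the annex $[x + \epsilon_0, x + \epsilon_0 + \delta']$ with $\delta' < \delta$ satisfies $|f(y) - f(x)| \leq |f(x + \epsilon_0) - f(x)| + \eta \leq \omega_x(\epsilon_0) + \eta$, so that $\omega_x(\epsilon_0 + \delta') - \omega_x(\epsilon_0) \leq \eta$. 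This gives right-continuity; the left-continuity estimate is symmetric.

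For the reverse direction, assume $\omega_x \in \predf{C}{0, h}$ with $\omega_x(0) = 0$. Continuity at $0$ then yields $\omega_x(\epsilon) \to 0$ as $\epsilon \to 0^+$, and the lower bound from Proposition \ref{prop:oscmajor} forces $|f(x + \epsilon) - f(x)| \to 0$, so $f$ is right-continuous at $x$. To upgrade this to continuity on the entire interval $I$, I would apply the same hypothesis at every base point $y \in I$: a putative discontinuity of $f$ at some $y_0 \in I$ would make $\omega_{y_0}$ fail to be continuous at $0$, contradicting the assumption when read uniformly in the base point.

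The main obstacle lies in the reverse direction: the point oscillation $\omega_x$ can plateau once $f$ has realized its extreme values on $[x, x + \epsilon_0]$, so a discontinuity of $f$ at a point far from $x$ need not produce a visible jump in $\omega_x$ as a function of $\epsilon$ alone. Closing this gap requires interpreting the hypothesis uniformly across base points, or, equivalently, combining the forward and backward oscillations $\omega_y^{+}$ and $\omega_y^{-}$ at the candidate discontinuity $y = y_0$ so that continuity is witnessed from both sides.
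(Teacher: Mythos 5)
Your proposal is correct in substance, and in the forward direction it is actually more complete than the paper's own argument. The paper disposes of the forward implication in one line by invoking Proposition \ref{prop:oscmajor} and the First Oscillation Lemma \ref{th:osc}, which on their own only control $\omega_x(\epsilon)$ near $\epsilon=0$; your uniform-continuity estimate, bounding $\omega_x(\epsilon_0+\delta')-\omega_x(\epsilon_0)\leq\eta$ by comparing each point of the annex $[x+\epsilon_0,\,x+\epsilon_0+\delta']$ against $x+\epsilon_0$ and combining with monotonicity, is exactly the missing detail needed to get continuity of $\omega_x$ on all of $[0,h]$ rather than merely at the origin. In the reverse direction you and the paper do essentially the same thing: use the lower bound $\omega_x(\epsilon)\geq|f(x\pm\epsilon)-f(x)|$ from Proposition \ref{prop:oscmajor} to force one-sided continuity of $f$ at the base point, and note that the paper's own conclusion there is only that ``$f$ is continuous in $x$.''

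The obstacle you flag in the reverse direction is genuine and is not an artifact of your write-up. The plateau phenomenon is a real counterexample to the literal statement for a single fixed base point: if $f$ increases from $f(x)$ to its maximal deviation on $[x,x+h/2]$ and then has a jump somewhere in $(x+h/2,\,x+h]$ while remaining inside the band already attained, then $\epsilon\mapsto\omega_x(\epsilon)$ is continuous on $[0,h]$ with $\omega_x(0)=0$, yet $f\not\cong\predf{C}{I}$. So continuity of $\omega_x$ in $\epsilon$ alone cannot imply continuity of $f$ on all of $I$, and the paper's proof does not close this gap either. Your proposed repair --- reading the hypothesis at every base point $y\in I$, so that a discontinuity of $f$ at $y_0$ is witnessed by $\omega_{y_0}^{\pm}$ failing to vanish at $0$ (Corollary \ref{corr:osc}) --- is the correct way to make the equivalence true, and it is the reading under which the lemma is subsequently used.
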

	\begin{proof}
		Forward implication:
		The proof follows directly from Prop \ref{prop:oscmajor} by application of the First Oscillation Lemma (\ref{th:osc}).
		Furthermore, since $f(x)=a$ is fixed then if $f$ is continuous so is  $f - a$. 
		
		Reverse implication:
		Consider the right-continuous case. 
		By hypothesis
		\[
		\llim{\epsilon}{0}{\omega^{+}(\epsilon)}= \limsup\limits_{\epsilon \rightarrow 0} | f(x+\epsilon) - f(x) |=0
		\]
		However, 
		\[
		 \limsup\limits_{\epsilon \rightarrow 0} | f(x+\epsilon) - f(x) | \geq 
		  \liminf\limits_{\epsilon \rightarrow 0} | f(x+\epsilon) - f(x) | \Longrightarrow 
		  \liminf\limits_{\epsilon \rightarrow 0} | f(x+\epsilon) - f(x) |=0
		\]
		by majorization. 
		Therefore, both limits coincide and the function $f$ is right-continuous about $x$.
		Let 
		\[
		| \omega^{+}_x (\epsilon) - \omega^{+}_x (\epsilon^\prime) | \leq \mu
		\]
		where $\mu$ is arbitrary but fixed.
		$\sup_{I} f = f(x^\prime)$, $\sup_{I^\prime} f = f(x^{\prime\prime})$, 
		Then for $I^\prime=[x, x + \epsilon^\prime]$
		\[
			| \omega^{+}_x (\epsilon) - \omega^{+}_x (\epsilon^\prime) | = | \sup_{I} f - \sup_{ I^\prime}f |
			=|f(x^\prime) -f(x^{\prime\prime}) | \leq \mu
		\]
		However, $|x^\prime-x^{\prime\prime}| \leq \min (\epsilon, \epsilon^\prime):=\delta$.
		Since $\epsilon$ and $\epsilon^\prime$ can be made arbitrary small then $f$ is continuous in x. 
		The left- continuous case can be proven by the substitution $\epsilon \rightarrow - \epsilon$.
	\end{proof}
	 \begin{corollary}\label{corr:osc}
		The following two statements are equivalent
		\[
		\llim{\epsilon}{0}{	\omega^{\pm}_x(\epsilon)} >0 \Longleftrightarrow \llim{ \epsilon}{0}{ f(x \pm \epsilon) \neq f(x)}
		\]
	\end{corollary}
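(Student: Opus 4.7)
The plan is to prove the corollary as a single-point, contrapositive reformulation of the Second Oscillation Lemma, leaning on the majorization inequality from Proposition \ref{prop:oscmajor}. Since $\omega^{\pm}_x$ is non-negative and non-decreasing in $\epsilon$ (by Proposition \ref{prop:oscmajor}), the one-sided limit $\llim{\epsilon}{0}{\omega^{\pm}_x(\epsilon)}$ exists in $[0,\infty)$ as an infimum, so the inequality $>0$ is precisely the negation of $\lim_{\epsilon\to 0^+}\omega^{\pm}_x(\epsilon)=0$. The right-hand statement $\llim{\epsilon}{0}{f(x\pm\epsilon)\neq f(x)}$ I read as the failure of the one-sided limit of $f$ at $x$ to equal $f(x)$. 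Proving the contrapositive of each direction independently then yields the stated equivalence.

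First I would dispatch the direction $\lim \omega^{\pm}_x(\epsilon)=0 \Longrightarrow \lim f(x\pm\epsilon)=f(x)$. This is immediate from the chain of inequalities $\omega^{\pm}_x(\epsilon)\geq |\deltapm{f}{x}| = |f(x\pm\epsilon)-f(x)|\geq 0$ given by Proposition \ref{prop:oscmajor}, via the squeeze theorem.

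Next I would establish the converse $\lim f(x\pm\epsilon)=f(x) \Longrightarrow \lim \omega^{\pm}_x(\epsilon)=0$. Given any $\eta>0$, the one-sided continuity hypothesis supplies $\delta>0$ with $|f(x\pm t)-f(x)|<\eta$ for every $t\in (0,\delta]$; taking the supremum over such $t$ gives $\omega^{\pm}_x(\delta)\leq \eta$, and monotonicity propagates $\omega^{\pm}_x(\epsilon)\leq \eta$ to every $0<\epsilon\leq \delta$. Since $\eta$ is arbitrary, $\omega^{\pm}_x(\epsilon)\to 0$.

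Contraposing the two implications yields the equivalence in the statement. No substantive obstacle is expected: the argument is essentially a localization of Lemma \ref{corr:contosc} to a single point, and the only mild subtlety is the convention on interpreting the somewhat compressed limit notation on the right-hand side of the equivalence.
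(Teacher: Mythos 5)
Your proposal is correct and matches the paper's treatment: the corollary is stated there without a separate proof, as the direct negation of the Second Oscillation Lemma (Lemma \ref{corr:contosc}), and your contrapositive argument — the majorization inequality of Proposition \ref{prop:oscmajor} with the squeeze theorem in one direction, and the supremum-plus-monotonicity estimate in the other — supplies exactly the details that the paper leaves implicit. The only point of care is the one you already flag: the right-hand side must be read as ``the one-sided limit of $f$ at $x$ fails to exist or fails to equal $f(x)$'' so that the two contrapositives together are exhaustive.
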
	

	 \begin{theorem}[Properties of $\omega_x (\epsilon)$ for sub-additive functions]\label{th:subaddosc}
	
	 	Consider a \textbf{sub-additive} bounded function  \textit{f} on   $I=[x, x + \epsilon]$. 
	 	Consider any two compact nested sub-intervals  $I_a= [x, x+a ) \subseteq \ I_b =[x, x +b ) \subseteq I $, such that $ a+b \leq \epsilon $ and assume  that $\omega_x(a)\neq 0$ and  $\omega_x(b)\neq 0$.
	 	Then the triangle inequality holds: 
	 	\begin{equation}\label{eq:subadd}
	 	\omega_x(a + b ) \leq \omega_x(a)  + \omega_x(b)
	 	\end{equation}
	 	Under the same hypothesis, for a real $\lambda  \geq 1$
		\begin{equation}\label{eq:subadd1}
		\omega_x( \lambda a ) \leq \lambda \; \omega_x(a)   
		\end{equation}
		\begin{equation}\label{eq:subadd2}
		\omega_x ( a+ \lambda b) \leq \omega_x (a) + \lambda \; \omega_x (b)
		\end{equation}
 		For a real $\lambda$, such that $0 \leq \lambda  \leq 1$,
		 \begin{equation}\label{eq:subadd11}
			 \omega_x( \lambda a ) \geq \lambda \; \omega_x(a)   
		 \end{equation}
		 \begin{equation}\label{eq:subadd21}
			 \omega_x ( a+ \lambda b) \geq \omega_x (a) + \lambda \; \omega_x (b)
		 \end{equation}
	Moreover, $\omega_x$ is concave.
\end{theorem}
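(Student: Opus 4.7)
The plan is to prove the sub-additivity inequality (\ref{eq:subadd}) as the principal step; the remaining inequalities and the concavity statement then follow either by iteration or by applying Lemma \ref{th:convsuperadd} to $\omega_x$ itself. By Lemma \ref{th:convsuperadd}, the sub-additivity of $f$ already implies that $f$ is concave on $I$, or equivalently that $f$ has non-increasing forward differences; this is the property that will transfer to $\omega_x$.

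For (\ref{eq:subadd}) I would fix any $t\in[0,a+b]$ that (near-)attains $\omega_x(a+b)$. If $t\leq b$ then $|f(x+t)-f(x)|\leq\omega_x(b)$ and we are done. Otherwise write $t=b+s$ with $s\in(0,a]$ and apply the triangle inequality
\[
|f(x+b+s)-f(x)|\leq |f(x+b+s)-f(x+b)|+|f(x+b)-f(x)|.
\]
The second summand is bounded by $\omega_x(b)$. For the first, the non-increasing-differences property of concave $f$ together with the nested structure $I_a\subseteq I_b$ yields that the increment of $f$ on $[x+b,x+b+s]$ is majorized in magnitude by the increment on $[x,x+s]$, hence by $\omega_x(s)\leq\omega_x(a)$. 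Passing to the supremum over $t$ gives (\ref{eq:subadd}).

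Inequalities (\ref{eq:subadd1}) and (\ref{eq:subadd2}) follow by iterating (\ref{eq:subadd}) along the lines of Lemma \ref{th:convsuperadd}: first induct on integer $n$ to obtain $\omega_x(na)\leq n\,\omega_x(a)$, then pass to rationals $q=p/k$ by writing $\omega_x(a)=\omega_x(k\cdot a/k)\leq k\,\omega_x(a/k)$, and finally to real $\lambda\geq 1$ by using the monotonicity of $\omega_x$ from Prop.\ \ref{prop:oscmajor}. For the reverse inequalities (\ref{eq:subadd11})--(\ref{eq:subadd21}) and the concavity of $\omega_x$, observe that $\omega_x$ is non-decreasing, satisfies $\omega_x(0)=0$, and has just been shown sub-additive; a second application of Lemma \ref{th:convsuperadd}, now to $\omega_x$ itself, therefore gives its concavity, and $\omega_x(\lambda a)\geq\lambda\,\omega_x(a)$ for $\lambda\in[0,1]$ is then a direct consequence via $\omega_x(\lambda a)=\omega_x(\lambda a+(1-\lambda)\cdot 0)\geq\lambda\,\omega_x(a)+(1-\lambda)\,\omega_x(0)$.

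The main obstacle is the case split in the proof of (\ref{eq:subadd}): one must argue that the absolute-value structure of $\omega_x$ is compatible with the concavity inequality for $f$ at the level of sub-intervals rather than points. This is precisely where the non-degeneracy hypothesis $\omega_x(a),\omega_x(b)\neq 0$ and the nested interval structure $I_a\subseteq I_b$ enter essentially rather than cosmetically, by ruling out the pathological configurations where the supremum defining $\omega_x$ is attained on the ``wrong'' side of the point $x$.
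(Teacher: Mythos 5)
Your overall architecture differs from the paper's: the paper obtains (\ref{eq:subadd}) by citing Lemma \ref{th:superaddtive} (a comparison of $\sup_A f + \sup_B f$ with $\sup_I f$ over nested intervals), and then handles the scaling inequalities and the concavity of $\omega_x$ by applying Lemma \ref{th:convsuperadd} together with a case analysis on whether $f$ is increasing or merely non-negative and non-decreasing. You instead prove (\ref{eq:subadd}) by a pointwise decomposition at a near-maximizing $t$, and then feed the resulting sub-additivity of $\omega_x$ back into Lemma \ref{th:convsuperadd} applied to $\omega_x$ itself. The second half of your plan is sound and arguably cleaner than the paper's case split, but it is entirely predicated on the first half, and that is where there is a genuine gap.

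The unproved step is the claim that $|f(x+b+s)-f(x+b)|\le \omega_x(a)$. Concavity of $f$ gives only the signed inequality $f(x+b+s)-f(x+b)\le f(x+s)-f(x)$; it does not bound the \emph{magnitude} of the left-hand increment. A concave $f$ may rise slightly on $[x,x+a]$ and then fall steeply on $[x+b,x+b+s]$, in which case $|f(x+b+s)-f(x+b)|$ can exceed both $|f(x+s)-f(x)|$ and $\omega_x(a)$. The hypotheses you invoke to rescue this --- $\omega_x(a)\neq 0$, $\omega_x(b)\neq 0$ and the nesting $I_a\subseteq I_b$ --- do not exclude that configuration, and you never say how they would; you only assert that ``this is precisely where they enter essentially.'' What actually closes the step is monotonicity: if $f$ is non-decreasing, both increments are non-negative and the signed inequality becomes the magnitude inequality. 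The paper's proof tacitly works under exactly that extra assumption (its argument begins ``Suppose that $f$ is increasing'' and ``Suppose that $f$ is non-negative and non-decreasing''). So either import monotonicity explicitly, after which your decomposition does go through, or the principal inequality (\ref{eq:subadd}) remains unestablished and everything downstream of it --- the iteration to rational and real $\lambda$, the reverse inequalities, and the concavity of $\omega_x$ --- falls with it.
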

	 \begin{proof}
	 	Inequality \ref{eq:subadd} follows from Lemma  \ref{th:superaddtive}.
	 	
	 	For the other inequalities the approach is more nuanced. 
	 	Suppose that $f$ is increasing. Then inequalities \ref{eq:subadd1} -- \ref{eq:subadd21} and concavity follow from  Lemma \ref{th:convsuperadd}.
	 	
	 	Suppose that $f$ is non-negative and non-decreasing. Consider the following table of values
	 	\[
	 	\begin{matrix}{}
	 	a^\prime \leq & a \leq & b^\prime < & b  < & c^\prime < & a+b \\
	 	\sup_{ A} f = f(a^\prime) \leq & f(a) \leq & \sup_{ B} f =f (b^\prime) \leq  & f (b) \leq & \sup_{ I} f = f(c^\prime) \leq & f(a+b)
	 	\end{matrix}
	 	\]
	 	Let
	 	\[
	 	M:= \omega_x (a^\prime) + \omega_x (b^\prime) - \omega_x (c^\prime) = 
	 	f(a^\prime) + f(b^\prime) - f(c^\prime)- f(x) 
	 	\]
	 	Then there are two cases to consider:
		Let $f(x) \leq f(a^\prime) \leq f(b^\prime) = f(c^\prime) $.
		Then $M = f(a^\prime) - f(x) \geq 0$. Therefore,  Lemma \ref{th:convsuperadd} can be applied as well.
	 	
	 	Let $f(x) \leq f(a^\prime) = f(b^\prime) \leq f(c^\prime) $.
	 	Then 
	 	\[
	 	M = 2f(a^\prime) - f(c^\prime) - f(x) \leq f(a^\prime) - f(c^\prime)  \leq 0
	 	\]
	 	However, in this case $\omega_x(a)=0$ so it must be excluded for the assertion to hold.
	
	 	Inequality 5 follows from the concavity. 
	\end{proof}

	Conversely, for a \textbf{super-additive} bounded function \textit{f} on \textit{I} an analogous result can be stated.
	\begin{theorem}[Properties of $\omega_x (\epsilon)$ for super-additive functions]\label{th:subaddosc2}
	Consider a \textbf{super-additive} bounded function  \textit{f} on   $I=[x, x + \epsilon]$. 
	Consider any two compact nested sub-intervals  $I_a= [x, x+a ) \subseteq \ I_b =[x, x +b ) \subseteq I $, such that $ a+b \leq \epsilon $ and assume  $\omega_x(a)\neq 0$ and  $\omega_x(b)\neq 0$.
	Then
	\begin{equation}\label{eq:subaddb}
	\omega_x(a + b ) \geq \omega_x(a)  + \omega_x(b)
	\end{equation}
	For a real $\lambda  \leq 1$
	\begin{equation}\label{eq:subaddb1}
	\omega_x( \lambda a ) \geq \lambda \; \omega_x(a)   
	\end{equation}
	\begin{equation}\label{eq:subaddb2}
	\omega_x ( a+ \lambda b) \geq \omega_x (a) + \lambda \; \omega_x (b)
	\end{equation}
	For a real $\lambda$, such that $0 \leq \lambda  \leq 1$,
	\begin{equation}\label{eq:subaddb11}
	\omega_x( \lambda a ) \leq \lambda \; \omega_x(a)   
	\end{equation}
	\begin{equation}\label{eq:subaddb21}
	\omega_x ( a+ \lambda b) \leq \omega_x (a) + \lambda \; \omega_x (b)
	\end{equation}
	Moreover, $\omega_x$ is convex.
	\end{theorem}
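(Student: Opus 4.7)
The plan is to mirror the proof of Theorem \ref{th:subaddosc} with every inequality reversed, exploiting the fact that Lemmas \ref{th:superaddtive} and \ref{th:convsuperadd} are stated symmetrically in the sub-additive and super-additive cases. The overall structure is identical because the hypotheses and conclusions of those lemmas differ only in the direction of the inequality, so the whole earlier argument ``dualizes'' essentially verbatim.

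Concretely, I would first derive \eqref{eq:subaddb} from the super-additive branch of Lemma \ref{th:superaddtive}, which asserts that $\sup_A f + \sup_B f \leq \sup_I f$ for a non-decreasing super-additive $f$; rewriting each supremum as $\omega_x$ plus the fixed quantity $f(x)$ then yields $\omega_x(a) + \omega_x(b) \leq \omega_x(a+b)$. For the remaining inequalities \eqref{eq:subaddb1}--\eqref{eq:subaddb21} and the convexity claim in the strictly increasing case, I would invoke Lemma \ref{th:convsuperadd} to conclude that $f$ is convex on $I$; since $\omega_x$ differs from $f$ only by a horizontal shift and the additive constant $-f(x)$ it inherits convexity, and \eqref{eq:subaddb1}, \eqref{eq:subaddb11} follow as the standard scaling consequences of convexity (obtained by applying the convexity inequality between the endpoints $0$ and $a$ with $\lambda \geq 1$ or $0 \leq \lambda \leq 1$ respectively), while \eqref{eq:subaddb2} and \eqref{eq:subaddb21} are produced by combining these with \eqref{eq:subaddb}.

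For the merely non-negative, non-decreasing case I would replay the table-of-values bookkeeping from the sub-additive proof: set $M := \omega_x(a') + \omega_x(b') - \omega_x(c')$ and analyze the two configurations $f(x) \leq f(a') \leq f(b') = f(c')$ and $f(x) \leq f(a') = f(b') \leq f(c')$. The sign of $M$ flips compared with the sub-additive case, so one admissible configuration yields $M \leq 0$ and the other is precisely the branch in which $\omega_x(a) = 0$, which is excluded by hypothesis. The main delicate point, and the most likely source of error, will be checking that the $f(x)$ offsets in Definition \ref{def:supadd} interact correctly with the suprema in $\omega_x$ so that the rewriting in the first step is legitimate; once that bookkeeping is verified, no obstacle arises beyond those already resolved in Theorem \ref{th:subaddosc}.
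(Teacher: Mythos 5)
Your proposal is correct and follows essentially the same route as the paper: the paper's own proof consists precisely of deducing inequality \ref{eq:subaddb} from Lemma \ref{th:superaddtive} and inequalities \ref{eq:subaddb1}--\ref{eq:subaddb21} together with convexity from Lemma \ref{th:convsuperadd}, which is exactly your plan. The extra detail you supply (dualizing the table-of-values bookkeeping from Theorem \ref{th:subaddosc} and flagging the $f(x)$ offset in passing from $\sup f$ to $\omega_x$) is elaboration the paper omits, not a different argument.
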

	\begin{proof}
		 Inequality \ref{eq:subaddb} follows from Lemma  \ref{th:superaddtive}.
		 Inequalities \ref{eq:subaddb1} -- \ref{eq:subaddb21} and convexity follow from  Lemma \ref{th:convsuperadd}.
	\end{proof}
	                                                                         
	The next definition is given by Bartle \cite{Bartle2001}[Part 1, Ch. 7, p. 103].
	\begin{definition}[Function of Bounded Variation ]\label{def:BV}
		Consider the interval $I=[a,b]$. 
		A partition of $I$ is a set of $n+1$ numbers $ \partd [I]: = (a < x_1 \ldots x_{n-1} < b ) $.
		The function  \funct{f}{R}{R} is said to be of bounded variation on $I$ if and only if
		there is a constant $M > 0$ such that
		\[
		V_\partd[I] := (\partd ) \sum\limits_{i=1}^{n+1} | f(x_{i} - x_{i-1})| \leq M
		\]
		The total variation of the function is defined as
		$$
		Var(f, I ):= \sup_{\partd} V_\partd [I]
		$$
		where the supremum is taken in all partitions \partd. 
		The class of function of bonded variation in a compact interval $I$ will be denoted as  $BV[I]$. 
	\end{definition}

	Under this definition the following proposition can be stated:   
	\begin{proposition}
		If $f$ is either monotonously increasing or decreasing on $I=[x, x+ \epsilon]$ then
		$	\omega_x (\epsilon)= |f(x+\epsilon) - f(x)| = |\Delta_{\epsilon}^{+} f(x) |$.
		If $f$ is either monotonously increasing or decreasing on $I=[ x- \epsilon, x]$ then
		$	\omega_x (\epsilon)= |f(x-\epsilon) - f(x)| = |\Delta_{\epsilon}^{-} f (x) |$.
		Under the same hypotheses $	\omega_x \cong BV[I]$.
	\end{proposition}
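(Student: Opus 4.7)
The plan is to reduce the supremum that defines $\omega_x^{\pm}(\epsilon)$ to a single endpoint evaluation by exploiting monotonicity, and then derive bounded variation as a corollary of the fact that $\omega_x$ inherits monotonicity in its argument $\epsilon$.

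First I would unpack Definition~\ref{def:oscfunct}: reading it as $\omega_x^{+}(\epsilon) = \sup_{y \in [x,x+\epsilon]} |f(y) - f(x)|$, the claim becomes a statement about where the supremum is attained. Suppose $f$ is non-decreasing on $I=[x,x+\epsilon]$. Then for every $y \in I$, $f(y) \geq f(x)$, so $|f(y)-f(x)| = f(y)-f(x)$, and by monotonicity this is maximized at $y = x+\epsilon$, yielding $\omega_x^{+}(\epsilon) = f(x+\epsilon) - f(x) = |\Delta^{+}_{\epsilon} f(x)|$. If $f$ is non-increasing on $I$, the sign flips: $|f(y)-f(x)| = f(x)-f(y)$ is maximized again at $y = x+\epsilon$, giving $\omega_x^{+}(\epsilon) = f(x) - f(x+\epsilon) = |\Delta^{+}_{\epsilon} f(x)|$. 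The backward case is symmetric, using the substitution $\epsilon \mapsto -\epsilon$ on $I = [x-\epsilon,x]$ and the analogous monotonicity argument to conclude $\omega_x^{-}(\epsilon) = |\Delta^{-}_{\epsilon} f(x)|$.

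For the bounded variation claim, I would fix the direction (say forward) and view $\omega_x$ as a function of the interval length $\epsilon \in [0, h]$, where $h = |I|$. Since $\omega_x$ is already known to be non-decreasing and non-negative by Proposition~\ref{prop:oscmajor}, and since we have just identified it with $|f(x+\epsilon) - f(x)|$ when $f$ is monotone, $\omega_x$ is itself monotone on $[0, h]$. Any monotone function on a compact interval is of bounded variation: for any partition $\partd = (0 < \epsilon_1 < \cdots < \epsilon_n < h)$, the sum $\sum |\omega_x(\epsilon_i) - \omega_x(\epsilon_{i-1})|$ telescopes to $\omega_x(h) - \omega_x(0) = \omega_x(h)$, which is finite and independent of the partition, so $\mathrm{Var}(\omega_x, [0,h]) = \omega_x(h) < \infty$.

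I expect no serious obstacle: the only delicate point is reading the supremum in Definition~\ref{def:oscfunct} in the natural sense as a supremum over the interior variable of the interval, since otherwise the expression $|f(x+\epsilon) - f(x)|$ would already be a constant and the $\sup$ would be vacuous. Once that reading is fixed, monotonicity of $f$ collapses the sup to an endpoint value, and the bounded variation conclusion is then immediate from the telescoping argument.
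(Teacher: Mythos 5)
Your proof is correct and, for the bounded-variation claim, takes essentially the same route as the paper: the paper also forms a partition $0 < a_1 < \cdots < a_n$ and observes that the variation sum telescopes to $\omega_x(a_n) - \omega_x(a_1)$ by monotonicity of $\omega_x$ in $\epsilon$. You additionally spell out the endpoint-attainment argument showing $\omega_x(\epsilon) = |\Delta_\epsilon^{\pm} f(x)|$ under monotonicity of $f$ (including the correct resolution of the ambiguous supremum in Definition~\ref{def:oscfunct}), a step the paper's proof leaves entirely implicit, so your version is if anything the more complete one.
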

	\begin{proof}
	Consider an increasing collection of intervals $ U(n) =\left\lbrace [x, x +a_k ] \right\rbrace^n_{k=1}   $ 
	such that $a_1 < \dots < a_n$.
	Then these form a partition $\partd [x, x+ a_n]$ over $I= \bigcup_{k=1}^n [x, x +a_k ] $. 
	Then $Var[\omega_x, I] = \omega_x(a_n) - \omega_x(a_1)$, which is bounded.
	\end{proof}

	The next theorem is a consequence of the  Darboux--Froda theorem. 
	The Darboux--Froda theorem states that the set of discontinuities of a monotone function is at most countable. 
	Hence, by Th. \ref{th:disconect} it is also totally disconnected. 
	In fact, the latter inference can be strengthened to arbitrary functions as Th. \ref{th:discont}.

	\begin{theorem}[Continuity set of oscillation]\label{th:oscont}
	Consider a bounded function $f$ defined on a compact interval $I=[x, x+h]$ and let it be given. 
	The discontinuity set $\Delta_\omega [I]$ of the oscillation function $\omega_x$ is a null set. 
	If $f$ is strictly increasing (respectively decreasing) on $I$
	then the continuity set of the oscillation $\omega_x $ can be written as
	\[
	\mathcal{C}_\omega [I] = \bigcup_{k=1}^\infty (a_k, b_k), \quad  b_k \leq a_{k+1} 
	\]
	\end{theorem}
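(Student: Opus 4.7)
The plan is to split the theorem into its two constituent claims and argue each from the monotonicity of $\omega_x$ established earlier in the section. For the first claim, the key ingredient is Proposition~\ref{prop:oscmajor}, which asserts that $\omega_x$ is non-decreasing as a function of $\epsilon$ on $[0,h]$. By the Darboux--Froda theorem cited immediately before the statement, the discontinuity set of any monotone real function on a compact interval is at most countable. I would then observe that any countable subset of $\mathbb{R}$ is trivially totally disconnected — between any two distinct reals one can always choose a separating point outside a given countable set — and apply the converse direction of Theorem~\ref{th:disconect} to conclude that $\Delta_\omega[I]$ is null.

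For the second claim, specialize to $f$ strictly increasing on $I$ and invoke the Proposition immediately preceding the theorem, which gives $\omega_x(\epsilon) = f(x+\epsilon) - f(x)$. Hence $\omega_x$ inherits the strict monotonicity of $f$, and a value $\epsilon \in [0,h]$ is a continuity point of $\omega_x$ if and only if $f$ is continuous at $x+\epsilon$ (the left case being handled symmetrically by $\epsilon \mapsto -\epsilon$). The continuity set $\mathcal{C}_\omega[I]$ is therefore the complement in $[0,h]$ of the (at-most countable, hence null) discontinuity set of $f$ shifted by $-x$. The plan is then to invoke the standard structure theorem for open subsets of $\mathbb{R}$: every such set decomposes as a disjoint union of at most countably many maximal open intervals, which can be relabelled in increasing order as $(a_k,b_k)$ with $b_k \leq a_{k+1}$, yielding exactly the displayed decomposition.

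The main obstacle I anticipate lies in ensuring that $\mathcal{C}_\omega[I]$ is genuinely open. For a monotone function with a dense accumulation of jump discontinuities (for instance, jumps located at $\{1/n\}_{n\geq 1}$ accumulating at $0$, where $0$ itself can remain a continuity point), the continuity set can fail to be open at such accumulation points. The delicate step is therefore either to exploit additional structure beyond strict monotonicity of $f$ — perhaps by combining the BV property from the preceding Proposition with the null-set conclusion of the first part — or to interpret the identity $\mathcal{C}_\omega[I] = \bigcup_k (a_k,b_k)$ as referring to the interior of the continuity set, where the countable-disjoint-open-interval decomposition is automatic from the topology of $\mathbb{R}$. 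Everything else in the proof is a straightforward bookkeeping of the ingredients already supplied by Proposition~\ref{prop:oscmajor}, Theorem~\ref{th:disconect}, and the monotone-$f$ identity for $\omega_x$.
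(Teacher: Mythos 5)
Your route to the first claim is essentially the paper's own. The paper does not cite Darboux--Froda but reproves it inline: each jump of the non-decreasing function $\omega_x$ (Prop.~\ref{prop:oscmajor}) determines an interval $[L_1,L_2]$ in the range, these intervals are pairwise disjoint by monotonicity, each contains a rational, hence there are at most countably many jumps and the discontinuity set is null. Your shortcut through the cited Darboux--Froda theorem plus the converse half of Theorem~\ref{th:disconect} is the same idea with the bookkeeping delegated, and is correct.

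For the second claim, the obstacle you flag is genuine, and the paper's proof does not close it either: it simply asserts that the continuity set is ``$J_h\setminus\{h\}$, which is an open countable interval,'' which is justified only when the discontinuities of $\omega_x$ do not accumulate inside $(0,h]$. Since the theorem assumes $f$ bounded and strictly monotone but not continuous, your counterexample pattern applies: take $f$ strictly increasing with jumps at $x+\tfrac12+\tfrac1n$ for $n\ge 2$ and $f$ continuous at $x+\tfrac12$; then $\epsilon=\tfrac12$ belongs to $\mathcal{C}_\omega[I]$ but is not interior to it, so $\mathcal{C}_\omega[I]$ cannot equal any union of open intervals. Your proposed repair --- reading the displayed identity as the canonical decomposition of the \emph{interior} of the continuity set, where the structure theorem for open subsets of $\mathbb{R}$ applies automatically --- is the minimal way to make the statement literally true; the alternative is to strengthen the hypothesis (finitely many jumps, or $f$ continuous, in which case $\mathcal{C}_\omega[I]$ is all of $(0,h)$ and the decomposition is trivial). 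In short: same approach as the paper on both parts, and you have correctly located the one step the paper leaves unjustified.
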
	
	\begin{proof}
		Consider the interval $I= [x, x+ h]$ with length $h= |I| $  and denote the left-open interval $J_h=(0, h]$ then $\exists q \in \fclass{Q}{} \cap J_h $.
		Therefore, there is a map $J_h \mapsto q$.
		Since $\omega_x (h)$ is non-decreasing, it has only jump discontinuities (since bounded, increasing sequences of numbers have limits).
		Indeed, by the LUB and GLB properties, if $\omega_x$ is (right-)continuous about $h$
		\begin{multline*}
			\left| \sup_{ \epsilon}   \omega_x (h + \epsilon) - L_1 \right|  \leq \mu/2, \quad \epsilon:: \mu \\
			\left| \inf_{ \epsilon}  \omega_x (h + \epsilon) - L_2 \right|  \leq \mu/2 \longrightarrow \\
	 	  		\left| \sup_{ \epsilon} 	  \omega_x (h + \epsilon) - \inf_{ \epsilon} 	\omega_x (h + \epsilon) - L_1 + L_2 \right| =
	 \left|  \omega_x (h + \epsilon) -  	\omega_x (h) -	  L_1 + L_2 \right|  \leq \mu 	 	   
		\end{multline*}
		by the non-decreasing property (Prop. \ref{prop:oscmajor}).  
		On the other hand,
		\[
		\left|  \omega_x (h + \epsilon) -  	\omega_x (h) \right| =
		\left| \sup_{ h+ \epsilon} f - \sup_{ h} f \right| \leq \left| \sup_{ h} f - f(x)\right| =\omega_{x}  (h)  
		\]
	    Therefore, if $h \notin \mathcal{C}_\omega$, that is, if $L_1\neq L_2$ 
	    by the Second Oscillation Lemma  there is some number $x^\prime$, such that 
		\[
		\omega_{x^\prime} ( h)   \geq L_1 - L_2 > 0
		\]
		and there is a jump discontinuity at $h$.
		In such a case, $\exists p \in \fclass{Q}{} \cap [L_1, L_2] $
		so that $[L_1, L_2] = L_p $ can be labelled for a uniquely chosen rational $p$.
		 Let $d_p =|L_2-L_2| $.  
		 If the number $d_p$ is the same for all $x \in I$ then we are done. 
		 On the other hand, suppose that $d_p$ varies in function of $x$.
		Since $\omega_{x}$ is non-decreasing then for another $p^\prime  \neq p\longrightarrow  L_p \cap  L_p^\prime= \emptyset $.
		Therefore, there is an isomorphism $p \longleftrightarrow h$.
		Therefore, the set of continuity of  $\omega_{x}$ is $J_h \setminus \{h\} $ which is an open countable interval.
		 Hence, the second claim follows. 
	\end{proof}

	Having established these properties, we will characterize the set of discontinuities of a function using the following definition:

	\begin{definition}\label{def:discont}
		Define the set of discontinuity for the function $F$ in the compact interval $I$ as
		\[
		\Delta [F, I]: = \left\lbrace x:  \osc{F}{x}{}{\pm} >0, \ x \in I \right\rbrace 
		\] 
		or if the context is known $\Delta [F, I] \equiv \Delta [ I] $. 
		In particular, under this definition $\osc{F}{x}{}{}= \infty$ is admissible.
	\end{definition}	

	\begin{theorem}[Disconnected discontinuity set]\label{th:discont}
	 Consider a bounded function $F$ defined on the compact interval $I$.
	 Then its set of discontinuity $\Delta [F, I]$ is totally disconnected in $I$. 
	\end{theorem}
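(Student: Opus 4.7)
The plan is to reduce the claim to showing that $\Delta[F, I]$ has empty interior, since for a subset of $\fclass{R}{}$ total disconnectedness is equivalent to containing no non-degenerate interval. I would stratify the discontinuity set by the level of oscillation: for each positive integer $n$ set
$\Delta_n := \left\lbrace x \in I : \osc{F}{x}{}{\pm} \geq 1/n \right\rbrace$,
so that $\Delta[F, I] = \bigcup_{n \geq 1} \Delta_n$. The strategy is then to show that each $\Delta_n$ is closed and nowhere dense in $I$, after which a single application of the Baire category theorem to the complete metric space $I$ gives that $\Delta[F, I]$ is meager, and hence has empty interior.

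Closedness of $\Delta_n$ follows from the upper semicontinuity of the pointwise oscillation. By Proposition \ref{prop:oscmajor} the interval oscillation $\osc{F}{x}{\epsilon}{\pm}$ is monotone non-decreasing in $\epsilon$, so its $\epsilon \to 0$ limit coincides with the infimum of a decreasing family of upper-semicontinuous functions of $x$; the level sets $\left\lbrace x : \osc{F}{x}{}{\pm} \geq 1/n \right\rbrace$ are therefore closed in $I$.

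The main technical step, and the principal obstacle, is to show each $\Delta_n$ has empty interior. Suppose for contradiction that some $\Delta_N$ contains a subinterval $J = (c, d) \subseteq I$. At every $x \in J$ the pointwise oscillation is at least $1/N$, hence arbitrarily close to each $x$ one finds witness points $y_x^-, y_x^+$ with $F(y_x^+) - F(y_x^-) \geq 1/(N+1)$. I would then adapt the rational-labelling trick from the final paragraph of the proof of Theorem \ref{th:oscont}: attach to each $x \in J$ a rational $q_x$ inside the image gap $[F(y_x^-), F(y_x^+)]$, localising the choice of witnesses on nested neighbourhoods of $x$ so that the gaps assigned to distinct points of $J$ are mutually disjoint. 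Since $F$ is bounded, all labels $q_x$ lie in $\fclass{Q}{} \cap [\inf_I F, \sup_I F]$, yielding an injection from the uncountable $J$ into a countable set — a contradiction. The delicate part is securing disjointness of the image gaps: for non-monotone $F$ no global monotonicity is available, so one must work on ever-shrinking neighbourhoods and invoke a nested-intervals separation, mirroring the rational/interval book-keeping used in the final paragraph of the proof of Theorem \ref{th:oscont}.
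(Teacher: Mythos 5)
Your reduction of total disconnectedness to empty interior is valid for subsets of $\fclass{R}{}$, and the decomposition $\Delta[F,I]=\bigcup_{n}\Delta_n$ with each $\Delta_n$ closed (by upper semicontinuity of the oscillation) is the standard, correct way to see that a discontinuity set is $F_\sigma$. But the step you flag as ``delicate'' --- that each $\Delta_n$ is nowhere dense --- is not merely delicate: it is false for a general bounded function, and no book-keeping can rescue it. Take $F=\mathbf{1}_{\fclass{Q}{}\cap[0,1]}$ on $I=[0,1]$: $F$ is bounded, $\osc{F}{x}{}{\pm}=1$ at every $x$, so $\Delta_1=I$ and the discontinuity set is all of $I$. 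The obstruction is exactly the one you name: your injection into $\fclass{Q}{}$ requires the image gaps $[F(y_x^-),F(y_x^+)]$ attached to distinct points to be pairwise disjoint, and that disjointness is a consequence of monotonicity --- it is the engine of the Darboux--Froda theorem --- not something that can be manufactured by shrinking neighbourhoods. For a non-monotone $F$ the gaps can all coincide, as they do in the example. Indeed, every $F_\sigma$ subset of $I$, including $I$ itself, is the discontinuity set of some bounded function, so the statement you are asked to prove admits no proof without an extra hypothesis.

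For comparison, the paper's own argument runs into the same wall but hides it: it constructs nested intervals and then announces that ``we consider only two cases: increasing and decreasing,'' which is precisely where an unstated monotonicity (or local monotonicity) assumption enters; it then concludes by asserting that a union of totally disconnected sets is totally disconnected, which is also untrue in general ($\fclass{Q}{}$ and $\fclass{R}{}\setminus\fclass{Q}{}$ are each totally disconnected, yet their union is $\fclass{R}{}$). So your attempt and the paper's founder on the same missing hypothesis, but you located the failure point more honestly: under monotonicity, or bounded variation via the Jordan decomposition, your rational-labelling scheme does close the argument (the jumps are disjoint subintervals of the bounded range, hence countably many, hence the discontinuity set is countable and, by Theorem~\ref{th:disconect}, totally disconnected); without it, neither the labelling trick nor the theorem survives.
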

	\begin{proof}
		Consider a decreasing collection of closed nested intervals from a partition of the interval $I_1=[x, x+ h]$ ($h>0$, not necessarily small); that is $ \left\lbrace I_k= [x, x +a_k ] \right\rbrace^n_{k=1}   $,
		$I_{k+1} \subset I_{k} \subset \ldots \subset I_1$.
 
		Since the case when the function is locally constant is trivial we consider only two cases: 
		increasing and decreasing.
	
		 Let $E_n = I_n \cap I_{n-1}$ and define
		\begin{flalign*}
		\Delta_n & := \sup_{I_n} F - \inf_{I_n} F \\
		\Delta_{n-1} & := \sup_{I_{n-1}} F - \inf_{I_{n-1}} F \\
		\Delta_E & := \sup_{E_n} F - \inf_{E_n} F \\
		\end{flalign*}
		as indicated in  Fig. \ref{fig:osc} :
		
		\begin{figure}[hpt]
			\centering
			\includegraphics[width=0.5\textheight]{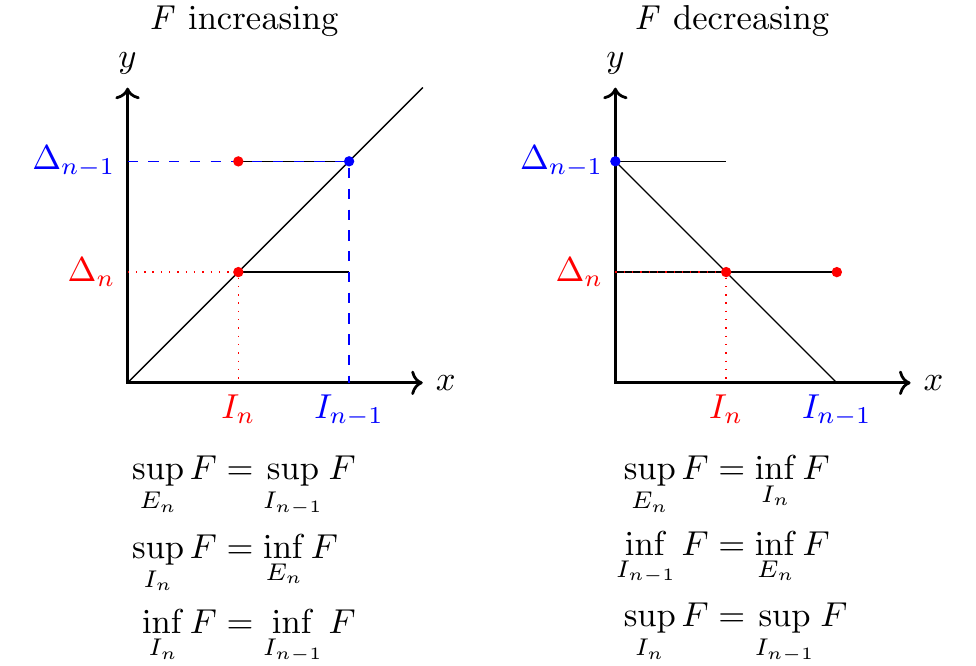}
			\caption{Schematic of the interval construction}\label{fig:osc}
		\end{figure}
	
  		\begin{description}
  			\item[Increasing case]  Suppose that $F$ is increasing in $I_{n-1}$ then
  		
  			\begin{multline*}
  			\Delta_{n-1} - \Delta_n =  \sup_{I_{n-1}} F - \inf_{I_{n-1}} F - \sup_{I_{n}} F + \inf_{I_{n}} F =  \\
  			\sup_{E_n} F - \inf_{I_{n-1}} F - \inf_{E_n} F + \inf_{I_{n-1}} F = \Delta_E = 
  			\osci{E_n} f
  			\end{multline*}
  			
  			\item[Decreasing case] Suppose that $F$ is decreasing in $I_{n-1}$ then
  			$$
  			\Delta_{n-1} - \Delta_n = 
  			\sup_{I_n} F - \inf_{E_{n}} F - \sup_{I_n} F + \sup_{E_{n}} F = \Delta_E = 
  			\osci{E_n} f
  			$$	   
  		\end{description}

 			On the other hand, $ \osci{I_n}f= \Delta_n$ and $ \osci{I_{n-1}}f = \Delta_{n-1}$.
		  If $F$ is continuous on the opening of $I_{n-1}$, in limit
		  \[
		  		  \llim{n}{\infty}{  \osci{I_n}f - \osci{I_{n-1}}f } =0 \Rightarrow  \llim{|E_n|}{0} { \osci{E_n}f  } =0
		  \]
		  and $\Delta [F, I] \setminus \{x\} = \emptyset$. 
		  On the other hand, $F$ can be discontinuous on the boundary points $\{x, x+a_{n-1}\}$, which are disconnected.
		  
		  Suppose then that both endpoints are points of discontinuity.
		  Hence, $\{x, x+a_{n-1}\} \subseteq \Delta [F, I]$.
		  If $F$ is discontinuous on the opening of  $I_{n-1}$, we take $I_n$ and proceed in the same way.
		  Therefore,  $\Delta [F, I] \neq \emptyset$ 
		   is a union of totally disconnected sets and hence it is totally disconnected.

	\end{proof}

 \section{Moduli of continuity}\label{sec:contmod}
 
 The moduli of continuity will be discussed as second-order properties of the preimage functions  
 and will be studied in a point-wise manner.
 A modulus will be indexed by a point $x$ in the domain of the preimage function but no other restrictions will be placed there.
 \begin{definition}[Modulus of continuity]
 	A point-wise modulus of continuity $\funct{g_x}{R}{R} $ of a function $\funct{f}{R}{} J \subseteq \fclass{R}{} $  is a 
 	\begin{enumerate}
 		\item non-decreasing continuous function, such that 
 		\item $g_x(0)=0$ and 
 		\item $ | \deltapm{f}{x}  | \leq  K\ g_x (\epsilon)  $ 
 		holds in the interval $I=[x, x \pm \epsilon] \subset J$  for some constant $K$.
 	\end{enumerate}

 	A regular modulus is such that $g_x(1)=1$.
 \end{definition}
 
 Unsurprisingly, under this definition every continuous function admits a modulus of continuity:
 \begin{theorem}[Modulus characterization theorem]\label{th:modcont}
 	Every continuous function admits a modulus of continuity on an interval, which is a subset of its domain. 
 	Any modulus of continuity is $BVC[I]$ for such interval \textit{I}. 
 \end{theorem}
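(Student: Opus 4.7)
The plan is to reduce both claims to properties of the point oscillation function $\omega_x$ from Subsection \ref{sec:oscfunct}, so that the point oscillation itself serves as the canonical modulus.

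For the existence part, given a continuous function $f$ on an open set containing the compact interval $I = [x, x+h]$, I would set $g_x(\epsilon) := \omega_x^+(\epsilon)$, and symmetrically $\omega_x^-(\epsilon)$ for the left-sided version on $[x-h, x]$. The three defining conditions would then be verified by direct appeal to earlier results: Proposition \ref{prop:oscmajor} supplies both the non-decreasing property and the majorization $|\deltapm{f}{x}| \leq \omega_x^{\pm}(\epsilon)$, so Property (3) of the definition holds with $K = 1$. The Second Oscillation Lemma (Lemma \ref{corr:contosc}) delivers both the continuity of $\omega_x$ on $[0,h]$ and the boundary identity $\omega_x(0) = 0$, precisely because $f \in \predf{C}{I}$ by hypothesis.

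For the second claim, continuity is built into the definition of a modulus, so only bounded variation needs separate verification. Since any modulus $g_x$ is non-decreasing on a compact sub-interval (and continuous, hence bounded there), it is automatically of bounded variation, with total variation equal to $g_x(h) - g_x(0) = g_x(h) < \infty$. Taken together, continuity and bounded variation place every modulus in $BVC[I]$.

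The only mild obstacle is ensuring the hypotheses of the Second Oscillation Lemma are met on an interval $I$ lying in the domain of $f$; since $f$ is continuous on its domain, any sufficiently small $h > 0$ guarantees this. Everything else is essentially bookkeeping built on top of the oscillation machinery already in place.
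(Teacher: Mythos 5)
Your proposal is correct and follows essentially the same route as the paper: both take the point oscillation function $\omega_x$ as the canonical modulus, invoke Proposition \ref{prop:oscmajor} for monotonicity and majorization, and the Second Oscillation Lemma for continuity and the vanishing at $0$; your explicit verification of bounded variation for a non-decreasing continuous modulus fills in a step the paper leaves implicit. The only cosmetic difference is that the paper additionally normalizes by $\omega_x(1)$ to obtain a regular modulus.
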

 \begin{proof}
 	Consider the point oscillation function $\omega_x (\epsilon)$.
 	Then $\omega_x$ is non-decreasing.
 	Trivially,  
 	$| \deltapm{f}{x}  | \leq   \omega_x (\epsilon)
 	$ holds.
 	Finally, $\omega_x$ is continuous and $\omega_x(0)= \osc{f}{x}{\epsilon}{+} =0 $ by Prop. \ref{corr:contosc}.
 	Then $g_x(\epsilon)= \omega_x (\epsilon) / \omega_x (1)  $.
 \end{proof}
 The point oscillation function used is the proof of Th. \ref{th:modcont} will be called a \textit{canonical modulus of continuity} of a continuous function.

 \subsection{Classification of the moduli of continuity}
 \label{sec:classmod}    
 
 \begin{proposition}\label{prop:classif}
 	Suppose that $\omega_x$ is strictly sub-additive in $I=[0, h]$. 
 	Then $\omega_x^\prime (0) = \infty$.
 
 	Suppose that $\omega_x$ is additive. 
 	Then $\omega_x$ is linear and homogeneous in \textit{h} and $\omega_x^\prime (0) $ exists.

	Suppose that $\omega_x$ is  super-additive.
	Then $\omega_x^\prime (0) $ exists.
 \end{proposition}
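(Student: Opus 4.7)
The proof hinges on transferring each additivity hypothesis into a statement about the chord slope $\phi(\epsilon):=\omega_x(\epsilon)/\epsilon$, which, because $\omega_x(0)=0$, is precisely the difference quotient whose limit as $\epsilon\to 0^+$ is $\omega_x'(0)$. Lemma \ref{th:convsuperadd} supplies the bridge: sub-additivity forces concavity, super-additivity forces convexity, and additivity forces both; combined with $\omega_x(0)=0$, each of these is equivalent to a monotonicity statement for $\phi$, at which point existence of a one-sided derivative reduces to the monotone convergence of a bounded sequence.

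For the additive case I would iterate $\omega_x(a+b)=\omega_x(a)+\omega_x(b)$ in the standard Cauchy fashion: induction first gives $\omega_x(n\epsilon)=n\,\omega_x(\epsilon)$ for positive integers $n$, then $\omega_x((p/q)\epsilon)=(p/q)\,\omega_x(\epsilon)$ for rationals, and continuity of $\omega_x$ (which is part of the definition of a modulus) extends rational homogeneity to all real scalars in $[0,1]$. Setting $c:=\omega_x(h)/h$ yields $\omega_x(\epsilon)=c\epsilon$, which is linear and homogeneous in $h$, and $\omega_x'(0)=c$ is a finite nonnegative number.

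For the super-additive case, Lemma \ref{th:convsuperadd} makes $\omega_x$ convex on $I$. Together with $\omega_x(0)=0$ this makes $\phi$ non-decreasing (the standard fact that the chord slope of a convex function based at $0$ is non-decreasing); hence $\phi(\epsilon)$ decreases monotonically to $\inf_{\epsilon>0}\phi(\epsilon)\in[0,\omega_x(h)/h]$ as $\epsilon\to 0^+$, and this infimum is $\omega_x'(0)$.

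For the sub-additive case, Lemma \ref{th:convsuperadd} gives concavity, so $\phi$ is non-increasing and thus $\phi(\epsilon)\nearrow \sup_{\epsilon>0}\phi(\epsilon)\in[0,\infty]$ as $\epsilon\to 0^+$; existence of the limit is automatic, but to obtain $\omega_x'(0)=\infty$ under the strict hypothesis I would argue by contradiction. Suppose the supremum $L$ were finite; then $\omega_x(\epsilon)\le L\epsilon$ throughout $I$, and concavity together with this matching linear majorant would force $\omega_x$ to agree with $\epsilon\mapsto L\epsilon$ on a neighbourhood of $0$, making $\omega_x$ additive there and contradicting strict sub-additivity. I expect this final step to be the main obstacle, since concavity alone is compatible with a finite right-derivative at $0$ (as for $\omega_x(\epsilon)=\epsilon-\epsilon^2$), so the word \emph{strict} in the hypothesis must be used essentially — exactly to exclude a concave modulus that is tangent to a line at the origin.
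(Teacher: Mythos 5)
Your treatment of the additive and super-additive cases is correct and essentially coincides with the paper's own argument: for additivity, the Cauchy functional-equation iteration over integers and rationals followed by continuity; for super-additivity, the monotone bounded doubling sequence $\frac{2^n}{h}\,\omega_x\!\left(\frac{h}{2^n}\right)$, which is the chord-slope monotonicity of a convex function vanishing at the origin that you invoke via Lemma \ref{th:convsuperadd}.

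The strictly sub-additive case, however, contains a genuine gap, and it sits exactly where you predicted. The closing step --- that a finite supremum $L$ of the chord slope, i.e. the linear majorant $\omega_x(\epsilon)\le L\epsilon$, together with concavity would force $\omega_x$ to \emph{agree} with $\epsilon\mapsto L\epsilon$ near $0$ --- is false. A concave function vanishing at the origin always satisfies $\omega_x(\epsilon)\le L\epsilon$ with $L=\sup_{\epsilon>0}\omega_x(\epsilon)/\epsilon=\omega_x^\prime(0)$ while touching that line only at $\epsilon=0$; your own example $\omega_x(\epsilon)=\epsilon-\epsilon^2$ does precisely this. Worse, that example is not merely ``concave with finite derivative'': it is \emph{strictly} sub-additive, since $\omega_x(a)+\omega_x(b)-\omega_x(a+b)=2ab>0$ for $a,b>0$, and it is a legitimate modulus on $[0,\tfrac12]$ (continuous, non-decreasing, vanishing at $0$). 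So it is a counterexample to the first claim of the proposition as literally stated, and no contradiction argument can close the gap without strengthening the hypothesis. For comparison, the paper's own proof of this case stumbles at the same spot: after establishing $M>\frac{2^n}{h}\,\omega_x\!\left(\frac{h}{2^n}\right)\ge m$ it divides by $2^n$ and passes to the limit in $\frac{1}{h}\,\omega_x\!\left(\frac{h}{2^n}\right)$, which tends to $0$ by continuity and is not the difference quotient $\omega_x\!\left(\frac{h}{2^n}\right)\big/\frac{h}{2^n}$; the asserted conclusion $0>\omega_x^\prime(0)\ge 0$ does not follow, and $\epsilon-\epsilon^2$ defeats that argument as well. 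The conclusion $\omega_x^\prime(0)=\infty$ does hold under a genuinely stronger hypothesis, e.g. a uniform doubling condition such as $\liminf_{\epsilon\to 0^+} 2\,\omega_x(\epsilon/2)/\omega_x(\epsilon)>1$ (the quantity that also drives Th. \ref{th:contgderiv}), which is satisfied by $\epsilon^\alpha$, $\alpha<1$, but not by strict sub-additivity alone.
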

 \begin{proof}
 	The proof proceeds in three cases.
 	\begin{description}
 		\item[Strictly sub-additive case]  
 		Suppose that the derivative exists finitely and let $M > \omega_x^\prime( 0)  \geq m >0$.
 		By sub-additivity there is $h$, such that
 		\[
 		2\, \omega_x (h/2) > \omega_x (h) \Rightarrow  M> \frac{2}{h} \; \omega_x \left( \frac{h}{2} \right)  > \frac{1}{h} \; \omega_x \left( h \right) \geq m
 		\]
 		Then by induction:
 		\[
 		M > \frac{2^n}{h} \; \omega_x \left( \frac{h}{2^n} \right) \geq m \Rightarrow  \frac{M}{2^n} > \frac{1}{h} \; \omega_x \left( \frac{h}{2^n} \right) \geq  \frac{m}{2^n}
 		\]
 		Taking the limit in $n \rightarrow \infty $ leads to
 		\[
 		0 > \omega_x^\prime( 0) \geq 0
 		\]
 		which is a contradiction. Therefore, the limit does not exist finitely and $\omega_x^\prime( 0) = \infty$.

 		\item[Additive case] 
 		By additivity, for all integer $k$ :  $\omega_x ( k h)= k \; \omega_x (h) $. Then by change of variables $z=k h$.
 		$ \omega_x ( z) = k \; \omega_x ( z/h)$. 
 		Therefore,   $\omega_x ( q h) = q \omega_x (h) $ for all rational $q$.
 		Then by continuity, $\omega_x (h)= K h$ for some $K>0$. 
 		
 		\item[Super-additive case]
 		\[
 		2\, \omega_x (h/2) \leq \omega_x (h) \Rightarrow  \frac{2}{h} \; \omega_x \left( \frac{h}{2} \right)  \leq \frac{1}{h} \; \omega_x \left( h \right)  
 		\]
 		Then by induction:
 		\[
 	  0 \leq \frac{2^n}{h} \; \omega_x \left( \frac{h}{2^n} \right) \leq \frac{2^{n-1}}{h} \; \omega_x \left( \frac{h}{2^{n-1}} \right) \leq \ldots     \leq \frac{1}{h} \; \omega_x \left(h \right)  
 		\] 
 		Since the sequence is bounded from below and decreasing it has a limit. 
 		Taking the limit in $n \rightarrow \infty $ leads to
 		\[
 		  \omega_x^\prime( 0) \leq \frac{1}{h}   \omega_x \left(h \right)   
 		\]
 		therefore, the derivative exists finitely at $h=0$.  
 	\end{description}
 \end{proof}
 \begin{example}
 	An illustrative example of the super- additive case is the function $f(x):= e^{-\frac{1}{x^2}}$.
 	For every $n>0$\[
 	\llim{x}{0}{\frac{ e^{-\frac{1}{x^2}}}{x^n}}=0
 	\]
 	The function is super- additive in the interval $x \in \left(-\frac{\sqrt{3}}{2 \sqrt{\log{2}}}, \frac{\sqrt{3}}{2 \sqrt{\log{2}}} \right)$ since there 
 \[2 {{e}^{-\frac{3}{4 {{x}^{2}}}}} < 1\]
 \end{example}
 
 Based on this classification result it is useful to apply the following definition.
 \begin{definition}[g-continuous class]\label{def:grclass}
 	Define the growth class $\fcclass{C}{g}[I]$ induced by the modulus of continuity $g(|I|)$ by the conditions:
 	If $f \cong \fcclass{C}{g}[I] $ on the compact interval $I$ then 
 	\begin{enumerate}
 		\item $C_x= \llim{\epsilon}{0}{\frac{\omega_x (\epsilon)}{ g(\epsilon)}  }  $ exists finitely,
 		\item $C_x$ is non-zero and
 		\item $| \deltapm{f}{x} | \leq C_x \; g (\epsilon) $.
 	\end{enumerate}
 	for $\epsilon= |I|$.
 	To emphasize the dependence on $x$ we may write $\fcclass{C}{g}_x$ and skip $I$ if it is known. \end{definition}
 This definition encompasses the definitions of the H\"older and Lipschitz functions.  
 So that
$
 \fclass{L}{} \equiv \holder{1} \equiv \fcclass{C}{g} $ for $ g (x) =x
 $
 or
$
 \holder{\alpha} \equiv \fcclass{C}{g},  $ for $g (x) =x^\alpha,    0 < \alpha <1
$.
 
 By Prop. \ref{prop:classif} the  modular functions can be classified into three distinct types
 \begin{description}
 	\item[Lipshitz] for which
 	$ \llim{\epsilon}{0}{\frac{\omega_x (\epsilon)}{  \epsilon}  } < L$ for some $L$.
 	These are either linear or otherwise $\omega_x^\prime (0)$ exits finitely and hence they are Lipschitz. 
 	\item[Singular] (or strongly non-linear) for which the ratio $\omega_x (\epsilon) /\epsilon$ diverges and
 	$\omega_x^\prime (0) = \infty $ by the L'H\^opital's rule.
 \end{description}

\section{Generalized maximal  $\omega$ derivatives}
\label{sec:genderivomega}

\begin{definition}
	For a function $f $ define superior and inferior, and respectively forward and backward, maximal $\omega$ modular derivatives, as the limit numbers $L$
	\begin{flalign*}
	\left| \sup_{ \epsilon} \frac{\deltapm{f}{x} } {\modcont{x}{\epsilon} } - L  \right| < \mu  &\Longrightarrow \bar{\mathcal{D}}^{\pm}_\omega f(x)= L \\
	\left| \inf_{ \epsilon} \frac{\deltapm{f}{x} } {\modcont{x}{\epsilon} } - L  \right| < \mu  &\Longrightarrow\ubar{\mathcal{D}}^{\pm}_\omega f(x)= L
	\end{flalign*}
	for all $\epsilon:: \mu$, $\epsilon >0$.
\end{definition}

\begin{remark}
	These derivative functions obviously generalize the concept of Dini derivatives (Def. \ref{def:dini})
\end{remark}

Equipped with the above definition we can state the first existence result:
\begin{theorem}[Bounded $\omega$-derivatives]\label{th:mondiff}
	For a continuous function the four derivative functions exist as  real numbers.
	Moreover, if $f$ is non-decreasing about $x^{+}$
	\begin{flalign*}
	\bar{\mathcal{D}}^{\pm}_\omega f(x) = 1 \\
	0 \leq  \ubar{\mathcal{D}}^{\pm}_\omega f(x) \leq 1
	\end{flalign*} 
	while if $f$ is non-increasing about $x^{-}$ 
	\begin{flalign*}
	\ubar{\mathcal{D}}^{\pm}_\omega f(x) = - 1 \\
	0 \geq \bar{\mathcal{D}}^{\pm}_\omega f(x) \geq -1
	\end{flalign*} 
\end{theorem}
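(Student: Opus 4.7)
My plan is to split the proof into an existence part (the four derivative numbers are in $\mathbb{R}$ for any continuous $f$) and an evaluation part (their values in the monotone case). Existence is a quick consequence of Proposition \ref{prop:oscmajor}: the inequality $|\deltapm{f}{x}| \leq \omega_x^\pm(\epsilon)$ forces the quotient $\deltapm{f}{x}/\omega_x^\pm(\epsilon)$ to lie in $[-1,1]$ whenever $\omega_x^\pm(\epsilon) > 0$. Consequently $\sup_{0<\epsilon<\delta}$ of this quotient is bounded above by $1$ and non-increasing in $\delta$, while $\inf_{0<\epsilon<\delta}$ is bounded below by $-1$ and non-decreasing in $\delta$; both therefore converge in $[-1,1]$ as $\delta \to 0^+$, and this converged value is precisely the $L$ of the definition. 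The degenerate possibility $\omega_x^\pm \equiv 0$ on a whole side of $x$ (that is, $f$ locally constant there) produces a $0/0$ quotient and needs a convention; one naturally sets the derivative to $0$ in that case.

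For the evaluation, I would observe that if $f$ is non-decreasing on $[x, x+\delta]$, then for every $\epsilon \in (0,\delta]$ one has $f(y) \geq f(x)$ for all $y \in [x, x+\epsilon]$, so the absolute value in the definition of $\omega_x^+$ may be dropped and the supremum is attained at the right endpoint:
\[
\omega_x^+(\epsilon) \;=\; \sup_{y \in [x, x+\epsilon]} \bigl(f(y)-f(x)\bigr) \;=\; f(x+\epsilon)-f(x) \;=\; \deltaplus{f}{x}.
\]
The quotient is therefore identically $1$ on $(0,\delta]$, giving $\bar{\mathcal{D}}^+_\omega f(x)=1$ and, a fortiori, $0 \leq \ubar{\mathcal{D}}^+_\omega f(x) \leq 1$. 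The backward assertion follows by the mirror argument on $[x-\epsilon, x]$; the non-increasing cases are dual and produce a quotient identically equal to $-1$, which flips the signs in the stated bounds.

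The main obstacle is interpretational rather than technical: the statement uses a single ``$\pm$'' in the conclusion together with a single-sided hypothesis ``about $x^+$'' (resp.\ ``about $x^-$''), so I would read the hypothesis and conclusion as sidedness-matched, and prove the forward and backward cases in parallel on the appropriate one-sided interval. A second subtlety appears if one reads ``non-decreasing about $x^+$'' weakly, as merely $f(y) \geq f(x)$ on a right-neighbourhood (not true monotonicity throughout): then showing $\bar{\mathcal{D}}^+_\omega f(x)=1$ requires the extra step of picking $y_\epsilon$ that attains the supremum in $\omega_x^+(\epsilon)$ (existing by compactness and continuity of $f$), setting $\epsilon' := y_\epsilon - x \leq \epsilon$, and using monotonicity of $\omega_x^+$ to conclude $\omega_x^+(\epsilon') = f(y_\epsilon)-f(x) = f(x+\epsilon')-f(x)$, so that the quotient equals $1$ along the null sequence $\epsilon' \downarrow 0$. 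In that weaker reading the stated bound $\ubar{\mathcal{D}}^+_\omega \geq 0$ becomes genuinely informative rather than redundant.
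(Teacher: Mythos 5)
Your proposal is correct and follows essentially the same route as the paper: existence via boundedness of the quotient $\deltapm{f}{x}/\omega_x^{\pm}(\epsilon)$ in $[-1,1]$ (from Proposition \ref{prop:oscmajor}) together with the LUB/GLB property, and evaluation via the identity $\omega_x^{+}(\epsilon)=\deltaplus{f}{x}$ for a non-decreasing $f$, with the non-increasing case handled by duality. You are in fact more careful than the paper on two points it glosses over --- the degenerate $0/0$ case where $f$ is locally constant, and the attainment of the supremum under the weaker reading of ``non-decreasing about $x^{+}$'' --- but these refinements do not change the argument's structure.
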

\begin{proof}
	Let $I=  [x, x \pm \epsilon]$ be given and $x$ is fixed but we can vary $\epsilon$. 
	Consider the auxiliary function
	\begin{equation}
	\fracvar {f}{x}{\omega}{\epsilon \pm } := \frac{\deltapm{f}{x}}{\modcont{x}{\epsilon}}
	\end{equation}
	The supremum definitions are restatements of the LUB property for 
	\fracvar {f}{x}{\omega}{\epsilon \pm } in terms of the variable $\epsilon$, while the infimum derivatives are restatements with the GLB property of the reals again for the same variable. 
	Therefore, all four numbers exist for a given argument $x$ and, therefore, under the above hypothesis.
	Moreover, 
	since $ |\deltapm{f}{x}| \leq \modcont{x}{\epsilon} $ then for an non-decreasing function 
	$|\bar{\mathcal{D}}^{\pm}_\omega f(x) | \leq 1 $.
	Therefore, by the supremum property 
	$ \sup_{ \epsilon} \frac{\deltapm{f}{x} } {\modcont{x}{\epsilon}}=1$.
	For a decreasing function $f$ it is sufficient to consider $-f$ and apply the same arguments. 
\end{proof}
\begin{corollary}\label{corr:monotone}
	Suppose that $f$ is monotone and continuous function on a compact interval. 
	If $f$ is increasing in $ [x, x +\epsilon]=I$ then
	$ \bar{\mathcal{D}}^{+}_\omega f(x) = \ubar{\mathcal{D}}^{+}_\omega f(x)  = 1 $.
	If $f$ is decreasing in $ [x, x +\epsilon]=I$ then
	$ \bar{\mathcal{D}}^{+}_\omega f(x) = \ubar{\mathcal{D}}^{+}_\omega f(x)  = - 1 $.
	If $f$ is increasing  in $ [ x -\epsilon, x]=I$ then
	$ \bar{\mathcal{D}}^{-}_\omega f(x) = \ubar{\mathcal{D}}^{-}_\omega f(x)  = 1 $.
	If $f$ is decreasing in $[ x -\epsilon, x]=I$ then
	$ \bar{\mathcal{D}}^{-}_\omega f(x) = \ubar{\mathcal{D}}^{-}_\omega f(x)  = - 1 $.
\end{corollary}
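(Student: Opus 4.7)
The plan is to reduce the corollary directly to the earlier proposition (the one immediately following Definition \ref{def:BV}) which asserts that for $f$ monotone on an interval $J$ of the form $[x,x+\epsilon]$ or $[x-\epsilon,x]$, the canonical modulus satisfies $\omega_x(|J|) = |\Delta_{|J|}^{\pm} f(x)|$. Once this identity is in hand, the ratio inside the $\sup$ and $\inf$ defining $\bar{\mathcal{D}}_\omega^{\pm}$ and $\ubar{\mathcal{D}}_\omega^{\pm}$ becomes constant in $\epsilon$, and the corollary falls out without any further limiting argument.

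First I would handle the forward, increasing case. Fix $\epsilon>0$ such that $f$ is increasing on $I=[x,x+\epsilon]$. Then for every $\epsilon'\in(0,\epsilon]$, $f$ is still increasing on $[x,x+\epsilon']$, so the cited proposition applies, giving $\omega_x(\epsilon') = f(x+\epsilon')-f(x) = \Delta_{\epsilon'}^{+} f(x) \geq 0$. Therefore $\fracvar{f}{x}{\omega}{\epsilon'+} = \Delta_{\epsilon'}^{+}f(x)/\omega_x(\epsilon') = 1$ identically on $(0,\epsilon]$. Hence both $\sup_{\epsilon'}$ and $\inf_{\epsilon'}$ of this ratio equal $1$, and the defining conditions for $\bar{\mathcal{D}}_\omega^{+}f(x)$ and $\ubar{\mathcal{D}}_\omega^{+}f(x)$ are met with $L=1$.

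The forward, decreasing case is symmetric: now $\Delta_{\epsilon'}^{+}f(x) = f(x+\epsilon')-f(x) \leq 0$, while $\omega_x(\epsilon') = -\Delta_{\epsilon'}^{+}f(x)$ by the same proposition, so $\fracvar{f}{x}{\omega}{\epsilon'+} = -1$ identically, forcing both maximal derivatives to equal $-1$. The two backward cases follow verbatim from the second half of the cited proposition, which gives $\omega_x(\epsilon') = |\Delta_{\epsilon'}^{-}f(x)|$ for $f$ monotone on $[x-\epsilon,x]$; the sign of $\Delta_{\epsilon'}^{-}f(x) = f(x)-f(x-\epsilon')$ is positive for increasing $f$ and negative for decreasing $f$, yielding the stated values.

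There is essentially no main obstacle here: the corollary is a direct consequence of identifying $\omega_x$ with the signed increment under monotonicity. The only point requiring any care is the bookkeeping of signs across the four subcases, and the verification that the hypothesis of monotonicity on $I$ automatically supplies monotonicity on every sub-interval $[x,x+\epsilon']\subseteq I$ (resp.\ $[x-\epsilon',x]$) so that the ratio is constant over the entire range of $\epsilon'$ used in the $\sup$/$\inf$.
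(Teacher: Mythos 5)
Your proposal is correct and follows essentially the same route as the paper: the paper's own proof simply observes that for monotone $f$ one has $|\deltapm{f}{x}| = \modcont{x}{\epsilon}$, so the defining ratio is identically $\pm 1$ and the supremum and infimum coincide. Your version merely spells out the sign bookkeeping and the restriction to sub-intervals more explicitly.
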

\begin{proof}
	Fix $x$ and consider $I=[x, \pm \epsilon]$.
	The proof follows from the fact that in both cases
	$ |\deltapm{f}{x}| = \modcont{x}{\epsilon} $.
\end{proof}
We can give generalized definition of local differentiability (called $\omega$-differentiability) as follows
\begin{definition}
	A function $f$ is $\omega$-differentiable at $x$ if at least one of the two limits exist
	\begin{flalign*}
	\left| \frac{\deltapm{f}{x} } {\modcont{x}{\epsilon} } - L  \right| < \mu  &\Longrightarrow \mathcal{D}^{\pm}_\omega f(x)= L 
 	\end{flalign*}
	where the conventions for $L, \mu$ and $\epsilon$ are as above. 
\end{definition}
Note that the definition only supposes that the one-sided limits of the increments -- $\mathcal{D}^{+}_\omega f(x)$ (respectively $\mathcal{D}^{-}_\omega f(x)$) exist as real numbers.
That is, 
\[
\mathcal{D}^{+}_\omega f(x) \neq \mathcal{D}^{-}_\omega f(x)
\]
is admissible.
This is the minimal statement that can be given for the limit of an increment of a function.
Nevertheless, based on two strong properties -- monotonicity and continuity -- it can be claimed that
\begin{proposition}[Monotone $\omega$-differentiation]\label{th:omegamono}
	If a function $f$ is monotone and continuous in a closed interval $I$ then it is continuously $\omega$-differentiable everywhere in the opening $I^{\circ}$.
\end{proposition}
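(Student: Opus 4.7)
The plan is to observe that this proposition follows almost directly from Corollary \ref{corr:monotone} and then reduce the continuity claim to the triviality that a constant function is continuous. Specifically, fix any $x \in I^{\circ}$. Because $x$ is interior, there exists $\epsilon_0 > 0$ such that $[x - \epsilon_0, x + \epsilon_0] \subset I$, so both the forward interval $[x, x + \epsilon]$ and the backward interval $[x - \epsilon, x]$ lie in $I$ for all sufficiently small $\epsilon > 0$. Since $f$ is monotone on the entirety of $I$, it is monotone on each of these one-sided intervals, and hypothesis of continuity of $f$ transfers from $I$ as well.

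Next, I would invoke Corollary \ref{corr:monotone} applied at $x$. Assuming first that $f$ is non-decreasing, the corollary gives $\bar{\mathcal{D}}^{\pm}_\omega f(x) = \ubar{\mathcal{D}}^{\pm}_\omega f(x) = 1$. Because the superior and inferior $\omega$-derivatives coincide at $x$, the one-sided quotient $\Delta^{\pm}_\epsilon f(x) / \omega_x(\epsilon)$ has a unique limit as $\epsilon \to 0^+$, namely $1$, so $\mathcal{D}^{\pm}_\omega f(x) = 1$. The non-increasing case is symmetric and yields $\mathcal{D}^{\pm}_\omega f(x) = -1$. In either case, the derivative function $x \mapsto \mathcal{D}^{\pm}_\omega f(x)$ is constant on $I^{\circ}$, hence continuous there.

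The only subtlety worth highlighting is verifying that the canonical modulus $\omega_x$ used in forming the quotient is well-defined at each interior point. This is guaranteed by Theorem \ref{th:modcont}: continuity of $f$ on $I$ ensures that $\omega_x$ exists, is non-decreasing, vanishes at $0$, and is continuous, so the quotient $\Delta^{\pm}_\epsilon f(x)/\omega_x(\epsilon)$ is meaningful for all small $\epsilon > 0$. For a monotone function one additionally has $\omega_x(\epsilon) = |\Delta^{\pm}_\epsilon f(x)|$ by the proposition preceding Theorem \ref{th:oscont}, which is precisely what drives Corollary \ref{corr:monotone} and makes the quotient equal $\pm 1$ identically, not only in the limit.

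I do not anticipate a real obstacle here: the statement is essentially a restatement of Corollary \ref{corr:monotone} combined with the trivial continuity of a constant. The only point that requires a sentence of care is the restriction to $I^{\circ}$, which is needed so that \emph{both} one-sided $\omega$-derivatives can be formed; at endpoints of $I$ only one of $\mathcal{D}^{+}_\omega f$ or $\mathcal{D}^{-}_\omega f$ is available, and the bilateral continuity statement would need to be reformulated accordingly.
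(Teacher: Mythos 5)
Your proposal is correct and follows essentially the same route as the paper: the paper's own proof simply invokes Corollary \ref{corr:monotone} to get $\mathcal{D}^{\pm}_\omega f = \pm 1$ (a constant, hence continuous) and notes that the restriction to $I^{\circ}$ is needed because only one increment is available at a boundary point. Your additional remarks on the well-definedness of $\omega_x$ via Theorem \ref{th:modcont} and on $\omega_x(\epsilon)=|\Delta_\epsilon^{\pm}f(x)|$ for monotone $f$ merely make explicit what already underlies that corollary.
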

\begin{proof}
	The continuity follows directly from Corr. \ref{corr:monotone}, while the restriction comes from the fact that at the boundary only one of the increments can be defined without further hypothesis for the values of $f$ outside of $I$.  
\end{proof}	
\begin{proposition}[BVC $\omega$-differentiation]
	If the function $f$ is $BVC[I]$  in a closed interval $I$ then it is  $\omega$-differentiable everywhere in the opening $I^{\circ}$.
\end{proposition}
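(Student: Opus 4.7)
The plan is to reduce to the monotone case via the Jordan decomposition. Since $f \in BVC[I]$ is continuous and of bounded variation on $I$, the positive and negative variation functions derived from $f$ are continuous and non-decreasing on $I$, giving $f = f(a) + g - h$ with $g, h$ continuous non-decreasing. Their continuity is inherited from the continuity of $f$ combined with bounded variation. This reduces the problem to analysing the $\omega$-derivative of a difference of continuous non-decreasing functions at a fixed interior point.

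I would then invoke the preceding Monotone $\omega$-differentiation proposition on $g$ and $h$ separately: each is continuously $\omega$-differentiable in $I^\circ$ with $\mathcal{D}_\omega^{\pm} g(x) = \mathcal{D}_\omega^{\pm} h(x) = +1$ when the canonical point oscillation moduli $\omega_x^{g}$ and $\omega_x^{h}$ are used, and in the monotone case these moduli coincide exactly with the signed increments $\Delta_\epsilon^\pm g(x)$ and $\Delta_\epsilon^\pm h(x)$ themselves.

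To pass from $g,h$ back to $f = g - h$, the natural choice of modulus is $\omega_x(\epsilon) := \omega_x^{g}(\epsilon) + \omega_x^{h}(\epsilon)$, which up to a shift is just the increment of the total variation function of $f$. This majorant is non-decreasing, continuous, vanishes at $0$, and dominates $|\Delta_\epsilon^\pm f(x)|$ by the triangle inequality, so it qualifies as a point-wise modulus of continuity for $f$ in the sense of the paper. With this choice the difference quotient collapses to
\[
\frac{\Delta_\epsilon^\pm f(x)}{\omega_x(\epsilon)} = \frac{\omega_x^{g}(\epsilon) - \omega_x^{h}(\epsilon)}{\omega_x^{g}(\epsilon) + \omega_x^{h}(\epsilon)},
\]
and existence of $\mathcal{D}_\omega^{\pm} f(x)$ becomes a statement about the limiting behaviour of the ratio of the two variation components as $\epsilon \to 0^\pm$.

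The main obstacle will be actually proving convergence of this quotient. The monotone result only controls $\omega_x^{g}$ and $\omega_x^{h}$ against themselves, not against one another, so a naive argument leaves open the possibility of oscillation of $\omega_x^{g}/\omega_x^{h}$ on finer and finer scales. Overcoming this presumably requires exploiting the concavity/sub-additivity properties of the canonical moduli established earlier (Theorems on properties of $\omega_x$ for sub-/super-additive functions), or else enlarging $\omega_x$ to a slightly coarser non-decreasing continuous majorant so that the quotient is forced to a common limit (typically zero when both variation parts are comparable). Once the limit is secured from each side separately, $\omega$-differentiability of $f$ at every $x \in I^\circ$ follows, and the proof closes by the same boundary argument used for the monotone proposition.
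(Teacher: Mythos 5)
Your strategy is the same as the paper's: the published proof consists essentially of the single sentence that the claim ``follows from the Jordan theorem, since a BV[I] function can be decomposed into a difference of two non-decreasing functions,'' followed by a remark about why $\mathcal{D}^{+}_\omega f(x)$ and $\mathcal{D}^{-}_\omega f(x)$ need not coincide. So on the level of the reduction you and the author agree. The difference is that you carry the reduction far enough to see that it does not obviously close, and then you stop.

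The gap you name is real and you do not fill it. Writing $f = g - h$ with $g,h$ continuous non-decreasing, the monotone proposition gives only $\Delta_\epsilon^{+} g(x) = \omega_x^{g}(\epsilon)$ and $\Delta_\epsilon^{+} h(x) = \omega_x^{h}(\epsilon)$ separately; it gives no control of one variation component against the other, so nothing forces convergence of
\[
\frac{\Delta_\epsilon^{+} f(x)}{\omega_x^{g}(\epsilon) + \omega_x^{h}(\epsilon)} \;=\; \frac{\omega_x^{g}(\epsilon) - \omega_x^{h}(\epsilon)}{\omega_x^{g}(\epsilon) + \omega_x^{h}(\epsilon)},
\]
and the same objection applies to the quotient formed with the canonical modulus $\omega_x(\epsilon) = \sup_{0 \le t \le \epsilon} |f(x+t) - f(x)|$ of $f$ itself, which is what the definition of $\mathcal{D}^{\pm}_\omega$ actually requires. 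Two continuous non-decreasing functions can have increments whose ratio oscillates without limit as $\epsilon \to 0$, so the concavity and sub-additivity results you gesture at will not rescue the argument without additional hypotheses; your closing paragraph (``presumably requires \dots or else enlarging $\omega_x$'') is a statement of the problem, not a solution. You should be aware, however, that the paper's own proof does not close this step either --- it never explains how $\omega$-differentiability of $g$ and $h$ with respect to their own moduli transfers to $g-h$ with respect to its modulus. You have not failed where the paper succeeds; you have isolated the precise point at which the published one-line argument is incomplete.
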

\begin{proof}
	The proof follows from the Jordan theorem, since a BV[I] function can be decomposed into a difference of two non-decreasing functions.
	On the other hand, without further hypotheses we can not claim anything about the eventual equality of 
	$\mathcal{D}^{+}_\omega f(x)$ and $\mathcal{D}^{-}_\omega f(x)$
	since $J_{x} = [x- \epsilon, x] \cap [x, x +\epsilon] = \{x\} $ so that we can form only the trivial map $x \mapsto \{x\}$, which without further restrictions of the domain of \textit{x} (i.e. by means of some topological obstructions) is uncountable.
\end{proof}

We can further utilize the concept of oscillation to give a concise  
general differentiability condition as
\begin{equation}\label{eq:gendiff}
\llim{\epsilon}{0}{\mathrm{osc}_{\epsilon} \frac{\deltapm{f}{x}}{\modcont{x}{\epsilon}}  } = 0 
\end{equation}

\begin{theorem}[Characterization of $\omega$-derivative]\label{th:omegadiff}
	The following implications hold
	\[
	\bar{\mathcal{D}}^{\pm}_\omega f(x)  = \ubar{\mathcal{D}}^{\pm}_\omega f(x)   = \mathcal{D}^{\pm}_\omega f(x) \Longrightarrow f \cong \mathcal{C}[x^{\pm}]
	\]
	\[
	\llim{\epsilon}{0}{\mathrm{osc}_{\epsilon}\frac{\deltapm{f}{x}}{\modcont{x}{\epsilon}}  } = 0 
	\Longleftrightarrow    \bar{\mathcal{D}}^{\pm}_\omega f(x)  =   \ubar{\mathcal{D}}^{\pm}_\omega f(x)   = \mathcal{D}^{\pm}_\omega f(x)
	\]
	so that if Eq. \ref{eq:gendiff} holds  at \textit{x} then $f$ is  $\omega$-differentiable (and hence continuous) at \textit{x}.
\end{theorem}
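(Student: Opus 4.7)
The plan is to establish the two assertions separately, then derive the summary ``so that'' assertion by combining them. The first implication is a direct squeeze argument exploiting that the modulus $\modcont{x}{\epsilon}$ vanishes at $0$, while the biconditional is the Cauchy criterion for existence of a one-sided limit, rewritten in oscillation form.

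For the first claim I would set $L := \bar{\mathcal{D}}^{\pm}_\omega f(x) = \ubar{\mathcal{D}}^{\pm}_\omega f(x)$. The definitions of the maximal $\omega$-derivatives give, for every $\mu > 0$, a right neighborhood of $0$ on which both $\sup_{\epsilon} \bigl( \deltapm{f}{x}/\modcont{x}{\epsilon} \bigr)$ and $\inf_{\epsilon} \bigl( \deltapm{f}{x}/\modcont{x}{\epsilon} \bigr)$ lie within $\mu$ of $L$; consequently the quotient itself is trapped within $\mu$ of $L$ throughout that neighborhood. Multiplying the estimate by $\modcont{x}{\epsilon}$ yields
\[
\bigl| \deltapm{f}{x} - L\,\modcont{x}{\epsilon} \bigr| \; \leq \; \mu\,\modcont{x}{\epsilon}.
\]
Because $\modcont{x}{\cdot}$ is a modulus of continuity (continuous at $0$ with $\modcont{x}{0} = 0$), both the right-hand side and the term $L\,\modcont{x}{\epsilon}$ tend to $0$ as $\epsilon \to 0^{\pm}$, forcing $\deltapm{f}{x} \to 0$, which is exactly one-sided continuity of $f$ at $x$, i.e. $f \cong \predf{C}{x^{\pm}}$.

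For the biconditional I would view $h(\epsilon) := \deltapm{f}{x}/\modcont{x}{\epsilon}$ as a scalar function of $\epsilon > 0$ alone and reread the derivative definitions as $\bar{\mathcal{D}}^{\pm}_\omega f(x) = \limsup_{\epsilon \to 0^{+}} h(\epsilon)$ and $\ubar{\mathcal{D}}^{\pm}_\omega f(x) = \liminf_{\epsilon \to 0^{+}} h(\epsilon)$. Since $\sup_{(0,\epsilon]} h$ is non-increasing and $\inf_{(0,\epsilon]} h$ is non-decreasing in $\epsilon$, taking limits gives
\[
\lim_{\epsilon \to 0^{+}} \mathrm{osc}_{\epsilon}\, h \; = \; \bar{\mathcal{D}}^{\pm}_\omega f(x) - \ubar{\mathcal{D}}^{\pm}_\omega f(x).
\]
The left-hand side vanishes exactly when the two maximal derivatives coincide, in which case their common value is $\mathcal{D}^{\pm}_\omega f(x)$ by definition; this is simply the Cauchy/oscillation criterion for existence of a one-sided limit of $h$ at $0$. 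Chaining this biconditional with the first implication produces the final sentence: whenever Eq.~\ref{eq:gendiff} holds at $x$, the function $f$ is both $\omega$-differentiable and one-sidedly continuous there.

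The main obstacle is notational rather than conceptual, namely reconciling the author's $\sup_{\epsilon}$ and $\inf_{\epsilon}$ inside the derivative definitions with $\limsup$ and $\liminf$ as $\epsilon \to 0^{+}$, and handling the degenerate case in which $\modcont{x}{\epsilon}$ vanishes on a right neighborhood of $0$. The modulus bound $|\deltapm{f}{x}| \leq K\,\modcont{x}{\epsilon}$ forces $\deltapm{f}{x} = 0$ there as well, so the quotient can be extended by continuity and both assertions of the theorem become trivial in that regime.
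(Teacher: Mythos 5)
Your treatment of the biconditional is correct and is essentially the paper's own argument: viewing $h(\epsilon)=\deltapm{f}{x}/\modcont{x}{\epsilon}$ as a function of $\epsilon$ alone, the limit of its oscillation equals $\limsup h-\liminf h$, which vanishes exactly when the two maximal derivatives coincide, and their common value is then $\mathcal{D}^{\pm}_\omega f(x)$ by definition. The concluding ``so that'' sentence follows by chaining, as you say.

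The continuity implication, however, contains a genuine gap: it is circular. In this theorem $\modcont{x}{\epsilon}$ is the canonical point oscillation function $\omega_x^{\pm}(\epsilon)=\sup_I|f(x\pm\epsilon)-f(x)|$ of a bounded, a priori possibly discontinuous $f$; it is non-decreasing and bounded, but by the Second Oscillation Lemma (Lemma \ref{corr:contosc}) the property $\modcont{x}{\epsilon}\to 0$ as $\epsilon\to 0$ is \emph{equivalent} to the one-sided continuity of $f$ at $x$ --- which is precisely the conclusion you are trying to reach. Your squeeze step, ``both the right-hand side and the term $L\,\modcont{x}{\epsilon}$ tend to $0$,'' therefore assumes what is to be proven. (If instead you insist that $\modcont{x}{\epsilon}$ is a modulus of continuity in the strict sense of the paper's definition, i.e. continuous with value $0$ at $0$, then continuity of $f$ follows from the growth bound alone and the derivative hypothesis is never used, so the implication becomes vacuous.) Note also that the obvious repair --- passing to $|\deltapm{f}{x}|\leq(|L|+\mu)\,\modcont{x}{\epsilon}$ and taking suprema --- only yields $\modcont{x}{\epsilon}\leq(|L|+\mu)\,\modcont{x}{\epsilon}$, which is uninformative in the generic case $|L|=1$ (cf. Theorem \ref{th:mondiff}). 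The paper's proof avoids the circle by subtracting the two estimates for the supremum and the infimum of the increment: with $M=\sup_\epsilon\deltapm{f}{x}$ and $m=\inf_\epsilon\deltapm{f}{x}$ it obtains $M-m\leq\mu\,\modcont{x}{\epsilon}$, and then uses only the boundedness of $\omega_x$ together with the arbitrariness of $\mu$ to force $M=m$, after which the First Oscillation Lemma delivers continuity. You should rework the first implication along these lines.
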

\begin{proof}
	\begin{description}
		\item[Continuity implication]  
		Consider the inequality
		\[
		\bar{\mathcal{D}}^{\pm}_\omega f(x) = \ubar{\mathcal{D}}^{\pm}_\omega f(x)  \Longrightarrow \left|  \frac{\deltapm{f}{x} } {\modcont{x}{\epsilon}} - L \right|  \leq \mu/2, \ \epsilon :: \mu
		\]
		so that 
		\begin{flalign*}
		L - \mu/2 \leq \frac{\deltapm{f}{x} } {\modcont{x}{\epsilon}}  & \leq L + \mu/2 \Longrightarrow \\
		\sup_{ \epsilon} \deltapm{f}{x}    &\leq \left( L + \mu/2 \right) \modcont{x}{\epsilon} \\
		\left( L - \mu/2 \right)  \modcont{x}{\epsilon} & \leq \inf_{ \epsilon} \deltapm{f}{x}   \Longrightarrow \\
		\left| \frac{ \sup_{ \epsilon} \deltapm{f}{x}}{\modcont{x}{\epsilon}} - L \right| \leq  \mu/2 \\
		\left| \frac{ \inf_{ \epsilon} \deltapm{f}{x}}{\modcont{x}{\epsilon}} - L \right| \leq  \mu/2 \\
		\end{flalign*}
		Let $\sup_{ \epsilon} \deltapm{f}{x}= M$ and $\inf_{ \epsilon} \deltapm{f}{x} =m $.
		Then by triangle inequality 
		\begin{flalign*}
		\frac{ \sup_{ \epsilon} \deltapm{f}{x}}{\modcont{x}{\epsilon}} -  \frac{ \inf_{ \epsilon} \deltapm{f}{x}}{\modcont{x}{\epsilon}} = \frac{ M- m }{\modcont{x}{\epsilon}}  &\leq \mu \\
		M - m &\leq \mu \; \modcont{x}{\epsilon} \\
		\end{flalign*}
		Therefore, in limit $ M -m \leq 0$, hence $M=m$ and $f$ is continuous. 
		This sequence of operations reminds the fact that real numbers are constructed by a limiting process.
		
		\item[Forward statement]  
		Suppose that $ \bar{\mathcal{D}}^{\pm}_\omega f(x)= L_1$ and 
		$\ubar{\mathcal{D}}^{\pm}_\omega f(x)= L_2$
		Then by LUB
		\begin{flalign*}
		\left| \sup_{ \epsilon} \frac{\deltapm{f}{x} } {\modcont{x}{\epsilon} } - L_1  \right| \leq \mu/2   \\
		\left| \inf_{ \epsilon} \frac{\deltapm{f}{x} } {\modcont{x}{\epsilon} } - L_2  \right| \leq \mu/2   
		\end{flalign*}
		so that 
		\[
		\left| \sup_{ \epsilon} \frac{\deltapm{f}{x} } {\modcont{x}{\epsilon} } - L_1  \right| +
		\left| \inf_{ \epsilon} \frac{\deltapm{f}{x} } {\modcont{x}{\epsilon} } - L_2  \right| \leq \mu 
		\]
		Then by the triangle inequality
		\[
	\left| L_1 - L_2 \right| \leq	\left| \underbrace{\sup_{ \epsilon} \frac{\deltapm{f}{x} } {\modcont{x}{\epsilon} } -
			\inf_{ \epsilon} \frac{\deltapm{f}{x} } {\modcont{x}{\epsilon} }
		}_{ \mathrm{osc}_{\epsilon} \frac{\deltapm{f}{x}}{\modcont{x}{\epsilon}} } +  L_1- L_2 \right|  \leq \mu
		\] 
		Then in limit 	by lemma. \ref{th:osc}
		\[
		\left|  L_1- L_2   \right| \leq 0 \Longrightarrow L_1 = L_2
		\]
		Further, starting from
		\begin{flalign*}
		&\inf_{ \epsilon} \frac{\deltapm{f}{x} } {\modcont{x}{\epsilon} } \leq    \frac{\deltapm{f}{x} } {\modcont{x}{\epsilon} } -	
		\leq \sup_{ \epsilon} \frac{\deltapm{f}{x} } {\modcont{x}{\epsilon} }  \Longrightarrow\\
		0 &\leq    \frac{\deltapm{f}{x} } {\modcont{x}{\epsilon} } -
		\inf_{ \epsilon} \frac{\deltapm{f}{x} } {\modcont{x}{\epsilon} }
		\leq \sup_{ \epsilon} \frac{\deltapm{f}{x} } {\modcont{x}{\epsilon} } -
		\inf_{ \epsilon} \frac{\deltapm{f}{x} } {\modcont{x}{\epsilon} } = \mathrm{osc}_{\epsilon} \frac{\deltapm{f}{x}}{\modcont{x}{\epsilon}} 
		\end{flalign*}
		Therefore, 
		\begin{flalign*}
		\left| \frac{\deltapm{f}{x} } {\modcont{x}{\epsilon} } -
		\inf_{ \epsilon} \frac{\deltapm{f}{x} } {\modcont{x}{\epsilon} }
		\right| \leq    \mu   \Longrightarrow
		\left| \sup_{ \epsilon} \frac{\deltapm{f}{x} } {\modcont{x}{\epsilon} } -  \frac{\deltapm{f}{x} } {\modcont{x}{\epsilon} } \right|  \leq   \mu
		\end{flalign*}
		Therefore, all three limits coincide.
		
		\item[Converse statement]  
		Suppose that 
		\[
		\bar{\mathcal{D}}^{\pm}_\omega f(x) = \ubar{\mathcal{D}}^{\pm}_\omega f(x) = L >0
		\]
		By hypothesis 
		\begin{flalign*}
		\left|\underbrace{ L -
			\inf_{ \epsilon} \frac{\deltapm{f}{x} } {\modcont{x}{\epsilon} } 
		}_{  A_\epsilon } \right| \leq    &\mu /2 \\
		\left|\underbrace{ \sup_{ \epsilon}  \frac{\deltapm{f}{x} } {\modcont{x}{\epsilon} } - L
		}_{  B_\epsilon } \right|  \leq   & \mu /2 \\
		\underbrace{\sup_{ \epsilon} \frac{\deltapm{f}{x} } {\modcont{x}{\epsilon} } -
			\inf_{ \epsilon} \frac{\deltapm{f}{x} } {\modcont{x}{\epsilon} }
		}_{ \mathrm{osc}_{\epsilon} \frac{\deltapm{f}{x}}{\modcont{x}{\epsilon}} } &\leq |A_\epsilon| +|B_\epsilon| \leq \mu 
		\end{flalign*}
		Therefore, in limit
		\[
		0 \leq \llim{\epsilon}{0}{}\mathrm{osc}_{\epsilon} \frac{\deltapm{f}{x}}{\modcont{x}{\epsilon}}     \leq 0
		\]
		so that $ \llim{\epsilon}{0}{\mathrm{osc}_{\epsilon} \frac{\deltapm{f}{x}}{\modcont{x}{\epsilon}}}= 0$.
		
	\end{description}
\end{proof}

\begin{corollary}[Range of $\mathcal{D}^{\pm}_\omega $ ]
	The range of $\mathcal{D}^{\pm}_\omega $ is given by the discrete set
	\[
	\mathcal{D}^{\pm}_\omega f(x) \subseteq \{-1, 0, +1 \}
	\]
\end{corollary}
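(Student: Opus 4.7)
The plan is to exploit the definition of $\omega_x(\epsilon)$ as a supremum: along every sequence $\epsilon_n\to 0$ one can find a nearby sequence $\eta_n\to 0$ on which the ratio $\deltapm{f}{x}/\modcont{x}{\epsilon}$ approaches $\pm 1$ in absolute value. If the $\omega$-derivative exists as a number $L$, this forces $|L|=1$, and the only remaining case is the degenerate one where $\omega_x$ vanishes identically near $x$, which yields $L=0$.

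First I would split into two cases. \textbf{Case A} is the generic one in which $\modcont{x}{\epsilon}>0$ for all sufficiently small $\epsilon>0$. Working with $\mathcal{D}^{+}_\omega$ (the minus case is symmetric), I would use the supremum characterization from Definition \ref{def:oscfunct}: for each $n$ and each $\epsilon_n\to 0$ there exists $t_n\in[x,x+\epsilon_n]$ with
\[
|f(t_n)-f(x)|\;\geq\;\bigl(1-\tfrac{1}{n}\bigr)\,\modcont{x}{\epsilon_n}.
\]
Setting $\eta_n:=t_n-x\leq \epsilon_n$ and using that $\modcont{x}{\cdot}$ is non-decreasing (Prop. \ref{prop:oscmajor}) together with the trivial bound $|f(t_n)-f(x)|\leq \modcont{x}{\eta_n}$, I would obtain
\[
\bigl(1-\tfrac{1}{n}\bigr)\frac{\modcont{x}{\epsilon_n}}{\modcont{x}{\eta_n}}\;\leq\;\frac{|f(x+\eta_n)-f(x)|}{\modcont{x}{\eta_n}}\;\leq\;1,
\]
and since the left-most ratio is sandwiched between $1-1/n$ and $1$, the absolute value of $\fracvarom{f}{x}{\pm}$ evaluated at $\eta_n$ tends to $1$.

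Second, I would invoke the existence of $\mathcal{D}^{+}_\omega f(x)=L$ via Theorem \ref{th:omegadiff}: the ratio $\deltaplus{f}{x}/\modcont{x}{\epsilon}$ converges to $L$ along every sequence $\epsilon\to 0$, and in particular along $\eta_n$. Consequently $|L|=1$. A brief sign-consistency remark finishes this step: the convergence of the ratio to $L$ fixes the sign of $\deltaplus{f}{x}/\modcont{x}{\eta_n}$ for all large $n$, so $L\in\{-1,+1\}$.

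Finally, in \textbf{Case B}, where $\modcont{x}{\epsilon}=0$ on some right-neighbourhood of $0$, Lemma \ref{corr:contosc} together with Corollary \ref{corr:osc} shows that $f$ is locally constant about $x$, hence $\deltaplus{f}{x}=0$ identically there; by the natural convention for $0/0$ in this context, the value assigned is $L=0$. Combining the cases gives $\mathcal{D}^{\pm}_\omega f(x)\subseteq\{-1,0,+1\}$. The only delicate point is Case A, where the supremum in the definition of $\modcont{x}{\cdot}$ need not be attained, so the approximation step with the $(1-1/n)$ factor is essential; everything else is routine.
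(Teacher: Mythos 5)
Your proof is correct, and it takes a genuinely different --- and in fact more robust --- route than the paper's. The paper disposes of the corollary in three lines by splitting into the cases where $f$ is constant, increasing, or decreasing on $I=[x,x+\epsilon]$ and citing Corollary \ref{corr:monotone}; this is quick but tacitly assumes that $f$ is monotone (or constant) on some one-sided neighbourhood of $x$, which need not hold for a general function admitting an $\omega$-derivative (e.g.\ one that is nowhere monotone near $x$). Your argument replaces that trichotomy with a direct approximation of the supremum defining $\omega_x$: choosing $t_n$ with $|f(t_n)-f(x)|\geq\bigl(1-\tfrac{1}{n}\bigr)\omega_x(\epsilon_n)$ and passing to $\eta_n=t_n-x$ sandwiches $|f(x+\eta_n)-f(x)|/\omega_x(\eta_n)$ between $1-\tfrac{1}{n}$ and $1$, so that existence of the limit forces $|L|=1$ whenever $\omega_x$ is not identically zero near $0$; the degenerate locally constant case then gives $0$, exactly as in the paper (both treatments assign the value $0$ to what is formally a $0/0$ expression, a convention rather than a limit). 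What your approach buys is that the conclusion holds at every $x$ where $\mathcal{D}^{+}_\omega f(x)$ exists, with no monotonicity hypothesis; what the paper's buys is brevity and a clean link to Corollary \ref{corr:monotone}. The only small points to make explicit in a final write-up are that $\eta_n>0$ (guaranteed since $\bigl(1-\tfrac{1}{n}\bigr)\omega_x(\epsilon_n)>0$ for $n\geq 2$ in your Case A) and that $\omega_x(\eta_n)>0$ (it majorizes $|f(x+\eta_n)-f(x)|>0$), so the quotient is well defined along the subsequence.
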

\begin{proof}
	Let $I=[x, x+ \epsilon]$ be given.
	If $f$ is constant in $I$ trivially $	\mathcal{D}^{\pm}_\omega f(x) =0$.
	If $f$ is increasing in $I$ then $	\mathcal{D}^{\pm}_\omega f(x) =1$ and by duality if
	$f$ is decreasing in $I$ then $	\mathcal{D}^{\pm}_\omega f(x) =-1$. 
\end{proof}

The $\omega$ non-differentiability set of a continuous function can be characterized by the following theorem.
\begin{theorem}[$\omega$ non-differentiability set]
	Consider the  function $f \cong \mathcal{C}[I]$ on the compact interval \textit{I}. Then the sets
	\[
	\Delta^{+}_\omega [I] := \left\lbrace 
	x:  \bar{\mathcal{D}}^{+}_\omega f(x)  > \ubar{\mathcal{D}}^{+}_\omega f(x)  \right\rbrace \cap I ,
\quad
	\Delta^{-}_\omega [I] := \left\lbrace 
	x:  \bar{\mathcal{D}}^{-}_\omega f(x)  > \ubar{\mathcal{D}}^{-}_\omega f(x)  \right\rbrace \cap I 
	\] 
	are  null sets.
	That is for a continous function the $\omega$ non-differentiability set is null.
\end{theorem}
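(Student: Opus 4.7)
The plan is to reduce the statement, via Theorem \ref{th:omegadiff}, to a Lebesgue-style decomposition: $x \in \Delta^{+}_\omega[I]$ precisely when the ratio $\varphi_x(\epsilon) := \deltapm{f}{x}/\modcont{x}{\epsilon}$ fails to have a limit as $\epsilon \to 0^+$. Since $\varphi_x(\epsilon) \in [-1,1]$ by the majorization $|\deltapm{f}{x}| \leq \modcont{x}{\epsilon}$, I would write
\[
\Delta^{+}_\omega[I] = \bigcup_{\substack{p,q \in \mathbb{Q} \cap [-1,1] \\ p < q}} E^{+}_{p,q}, \qquad E^{+}_{p,q} := \{x \in I :  \ubar{\mathcal{D}}^{+}_\omega f(x) < p < q < \bar{\mathcal{D}}^{+}_\omega f(x)\},
\]
and, invoking Definition \ref{def:nullsetf}, it suffices to prove each $E^{+}_{p,q}$ is null, since a countable union of null sets is null.

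Next, I would exploit a structural property of the canonical modulus. For any $x$ at which $f$ is not locally constant on the right, the compactness of $[x, x+\epsilon]$ and continuity of $f$ guarantee a point $y^{*}_\epsilon \in [x, x+\epsilon]$ attaining the supremum in $\modcont{x}{\epsilon}$. Setting $\epsilon^{*} := y^{*}_\epsilon - x \leq \epsilon$, monotonicity of $\modcont{x}{\cdot}$ forces $\modcont{x}{\epsilon^{*}} = \modcont{x}{\epsilon}$ and hence $\varphi_x(\epsilon^{*}) = \pm 1$. Taking $\epsilon \to 0$ and applying pigeonhole on the sign, at least one of $\bar{\mathcal{D}}^{+}_\omega f(x) = 1$ or $\ubar{\mathcal{D}}^{+}_\omega f(x) = -1$ holds at every such $x$. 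This restricts the shape of $E^{+}_{p,q}$: either $\bar{\mathcal{D}}^{+}_\omega f(x) = 1$ and $\ubar{\mathcal{D}}^{+}_\omega f(x) < p$, or $\ubar{\mathcal{D}}^{+}_\omega f(x) = -1$ and $\bar{\mathcal{D}}^{+}_\omega f(x) > q$, or both.

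Finally, and this is the main obstacle, I would close the argument with a Vitali / rising-sun type covering applied to $E^{+}_{p,q}$. At each $x$ in this set there are arbitrarily small $\epsilon$ witnessing $\varphi_x(\epsilon) > q$ (say) and arbitrarily small $\epsilon'$ witnessing $\varphi_x(\epsilon') < p$; one extracts a disjoint fine subcover of $E^{+}_{p,q}$ by the corresponding intervals and aims to show the total length is arbitrarily small. The technical ingredients available are the sub- and super-additivity inequalities for $\modcont{x}{\cdot}$ (Lemmas \ref{th:oscsupadd}, \ref{th:oscsuperadd}) together with the continuity-of-oscillation result of Theorem \ref{th:oscont}, which together force a measure contradiction whenever one tries to assemble overlapping pieces where $\varphi_x$ straddles the gap $[p,q]$. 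The difficulty is that without a bounded-variation hypothesis there is no readily available monotone mass to run the classical Riesz distribution argument, and the replacement has to come from the concavity (resp.\ convexity) dichotomy of $\modcont{x}{\cdot}$ established in Section \ref{sec:contmod}. The analogous claim for $\Delta^{-}_\omega[I]$ follows by the reflection $\epsilon \to -\epsilon$.
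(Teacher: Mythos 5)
Your reduction to the sets $E^{+}_{p,q}$ indexed by rational pairs, and the observation that the quotient $\varphi_x(\epsilon)$ lives in $[-1,1]$, are sound; the middle paragraph (that $\bar{\mathcal{D}}^{+}_\omega f(x)=1$ or $\ubar{\mathcal{D}}^{+}_\omega f(x)=-1$ wherever $f$ is not locally constant on the right) is also a correct and genuinely useful structural remark. But the proof has a gap exactly where you flag it: the covering argument for $E^{+}_{p,q}$ is never carried out, and it is not a routine adaptation of the Riesz/Vitali scheme. In the classical proof for monotone $f$ the denominators of the difference quotients are the interval lengths themselves, so the rising-sun selection yields a chain of the form $q\sum_k|I_k| \le \sum_k \Delta^{+}_{\epsilon_k}f(x_k) \le p\sum_m|J_m|$ after a second selection, and the contradiction $q\,|E|\le p\,|E|$ falls out because both coverings are measured against the same Lebesgue mass. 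Here the denominator is $\modcont{x}{\epsilon}$, which depends on the base point of each selected interval; the quotients $\varphi_{x_k}$ at different $x_k$ are normalized by different, mutually incomparable functions, so there is no common mass against which to compare the two coverings. The concavity/convexity dichotomy you invoke (Theorems \ref{th:subaddosc} and \ref{th:subaddosc2}) controls how a single $\omega_x$ scales along its own $\epsilon$-axis, but says nothing about how $\omega_{x_1}$ and $\omega_{x_2}$ relate for $x_1\neq x_2$, which is precisely what the telescoping step needs. Until that comparison is supplied, the claim that each $E^{+}_{p,q}$ is null is unproven, and this is where all of the analytic work of the theorem lies.

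For orientation: the paper does not attempt a covering argument of this kind at all. Its proof fixes a threshold $\mu>0$ for the oscillation of the quotient, associates to each point where the quotient oscillates a rational in a small interval $J_r$, concludes that the set of such points is a countable union of the $J_k$, and then uses the total disconnectedness of discontinuity sets (Theorem \ref{th:discont}) to choose the covering intervals with lengths $\delta/2^k$, giving total measure at most $\delta$. So the two routes are genuinely different, and the machinery your plan requires (a Vitali covering theorem plus a substitute for the monotone mass estimate) is not available in the paper's toolkit; you would have to import and verify it independently to complete your version.
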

\begin{proof}
	Consider the case wherever the right $\omega$-derivative does not exist.
	That is, the defining quotient oscillates without a limit.
	Then  for $0<u, v \leq \delta$ 
	\[\label{ineq:D2}
	\left| \frac{\deltaop {f}{x}{u}{+} }{\modcont{x}{u} } -  \frac{\deltaop {f}{x}{v}{+} }{\modcont{x}{v}} \right|  > \mu \tag{D1}
	\]
	for some $\mu>0$.
	We can consider a variable $\xi \in [x, x+ u] \cap [x, x+ v] = [x, x+ \min (u,v)] =J $.
	There is a rational $r \in \fclass{Q}{} \cap J$.
	Associate $ (r, J) \equiv J_r$ so that $J_r$ can be counted by an enumeration of the rationals and index $\delta :: r$.
	Therefore, the set
	\[
	\Delta_{\omega}: = \bigcup_{k=1}^{\infty} \{z:  \rref{ineq:D2}  \  \mathrm{true},  z \in J_k  \}
	\]
	is countable $\forall \delta >0$. Since $\Delta_{\omega}$ is totally disconnected by Th. \ref{th:discont} 
	we can select $\delta_k= \delta /2^k$ and $J_k \subset J_r$.
	Therefore, 
	\[
	| \Delta_{ \omega}| = \sum\limits_{k=1}^{\infty} |J_k | \leq \sum\limits_{k=1}^{\infty} \frac{\delta}{2^k} = \delta
	\]
	and
	$\Delta_{ \omega}$ is a null set.
	The left derivative case holds by duality. 
\end{proof}

	Note that this is the best possible result for the local-type of derivatives and partially corresponds to the expectation of Ampere.


	\section{Continuity sets of derivatives}\label{sec:contderiv}
	
	
	In the following we re-state the classical result of the Lebesgue differentiation theorem.
	The poof is given using the machinery of $\omega$-differentiation.
	In the following argument I reserve the term "\textbf{strictly monotone function}" to mean only a strictly increasing or strictly decreasing function in an interval.

	\begin{theorem}[Lebesgue monotone differentiation theorem]
		Suppose that $f$ is strictly  monotone and continuous in the compact interval $I$. 
		Then $f$ is continuously differentiable almost everywhere.
		The set
		\[
		\Delta_f [I] := \left\lbrace 
		x:  f^\prime_{+} (x) \neq f^\prime_{-} (x) \right\rbrace \cap I 
		\] 
		is a  null set.
	\end{theorem}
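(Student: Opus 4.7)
My plan is to convert the classical differentiability statement into a question about the canonical modulus of continuity and then attack it by a Vitali-type covering argument. Since $f$ is strictly monotone (assume increasing; the decreasing case follows by applying the argument to $-f$) and continuous, the canonical modulus satisfies $\omega_x^{+}(\epsilon) = f(x+\epsilon)-f(x)$ and $\omega_x^{-}(\epsilon) = f(x)-f(x-\epsilon)$, so the one-sided Newton quotients coincide identically with $\omega_x^{\pm}(\epsilon)/\epsilon$ for every $\epsilon > 0$. Hence $f'_{+}(x)$ and $f'_{-}(x)$ exist as finite real numbers iff the one-sided limits $\lim_{\epsilon \to 0^{+}} \omega_x^{\pm}(\epsilon)/\epsilon$ exist finitely, and $\Delta_f[I] \subseteq S_\infty \cup S_{\neq}$, where $S_\infty$ is the set of $x$ at which at least one of the two limits equals $+\infty$ and $S_{\neq}$ is the set at which both limits are finite but unequal.

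By Prop.~\ref{prop:classif}, at each $x$ the modulus $\omega_x^{\pm}$ is either of Lipschitz type (finite derivative at $0$), additive (linear), or singular (one-sided derivative equal to $+\infty$ at $0$); only the singular case contributes to $S_\infty$. To show $|S_\infty| = 0$, I would invoke a Vitali-type covering extraction. If $|S_\infty| > 0$, then for every $N > 0$ and every $x \in S_\infty$ there are arbitrarily small intervals $J_x = [x, x + \epsilon_N(x)]$ (or the left-hand analogue) on which $\omega_x^{+}(\epsilon_N(x)) / \epsilon_N(x) > N$. Extracting an almost-disjoint countable subfamily $\{J_{x_k}\}$ that covers $S_\infty$ up to a fraction of its measure gives
\[
\sum_k \bigl|f(x_k + \epsilon_N(x_k)) - f(x_k)\bigr| \;>\; N \cdot \tfrac{1}{2}|S_\infty|,
\]
while the left-hand side is dominated by the total variation $f(\sup I) - f(\inf I) < \infty$; letting $N \to \infty$ produces a contradiction.

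For the residual set $S_{\neq}$ on which $f'_{+}(x)$ and $f'_{-}(x)$ both exist finitely but differ, I would stratify by rational thresholds $\eta > 0$ and apply a mirror Vitali argument: at each $x$ with $|f'_{+}(x) - f'_{-}(x)| > \eta$ one selects a small symmetric pair of intervals around $x$ on which the two one-sided increments differ by at least $(\eta/2)$ times the interval length, and summing over an almost-disjoint subcover converts a positive-measure set of such discrepancies into a telescoping contribution that again exceeds the finite total variation. Taking a countable union over rational $\eta$ in the style of Th.~\ref{th:disconect} then gives $|S_{\neq}| = 0$, completing the bound $|\Delta_f[I]| = 0$.

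The main obstacle is the Vitali-type covering extraction itself, both for $S_\infty$ and for $S_{\neq}$: this is the genuine measure-theoretic content of the classical Lebesgue monotone differentiation theorem. The $\omega$-differentiation framework of Section~\ref{sec:genderivomega} makes the reduction to moduli of continuity transparent and isolates the role of monotonicity as the device that equates the modulus with the variation of $f$, but the final bounding step still rests on a genuine covering lemma rather than on the point-oscillation and continuity-set machinery alone.
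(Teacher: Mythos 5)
Your reduction of the Newton quotient to $\omega_x^{\pm}(\epsilon)/\epsilon$ is sound and matches the paper's own starting point (it is exactly the content of Corollary \ref{corr:monotone} together with the proposition that $\omega_x(\epsilon)=|\Delta_\epsilon^{\pm}f(x)|$ for monotone $f$). The route you take from there, however, is genuinely different from the paper's, and it has two concrete gaps. First, the decomposition $\Delta_f[I]\subseteq S_\infty\cup S_{\neq}$ is not exhaustive: it omits the set where a one-sided difference quotient has finite but distinct $\limsup$ and $\liminf$, i.e. where $f'_{+}$ or $f'_{-}$ simply fails to exist. That set is where the real weight of the Lebesgue theorem sits (it is needed for the claim of almost-everywhere differentiability), and it requires the two-parameter version of the covering argument: for each rational pair $s<t$ one covers the set where the lower quotient is below $s$, re-covers inside by intervals where the upper quotient exceeds $t$, and compares the two resulting sums via monotonicity. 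The single-scale extraction you describe does not reach it. Second, your argument for $S_{\neq}$ does not close: with $\eta>0$ fixed, summing the second differences $f(x_k+\epsilon_k)-2f(x_k)+f(x_k-\epsilon_k)>(\eta/2)\,\epsilon_k$ over a disjoint subfamily is dominated by $f(\sup I)-f(\inf I)$ and yields only the bound $\sum_k\epsilon_k\leq 2\bigl(f(\sup I)-f(\inf I)\bigr)/\eta$. Unlike the $S_\infty$ case there is no parameter tending to infinity, so no contradiction with $|S_{\neq}|>0$ follows. (The $S_\infty$ half is fine modulo the covering lemma, which, note, is nowhere established in the paper, so as written your proof rests on an imported tool.)

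For comparison, the paper avoids covering arguments entirely: it takes $\mathcal{D}_\omega f(x)=1$ everywhere from Corollary \ref{corr:monotone}, factors each one-sided Newton quotient as $\bigl(\Delta_\epsilon^{\pm}f(x)/\omega_x(\epsilon)\bigr)\cdot\bigl(\omega_x(\epsilon)/\epsilon\bigr)$, and from the coincidence of the $\omega$-derivatives deduces $\bigl|\Delta_\epsilon^{+}f(x)/\epsilon-\Delta_\epsilon^{-}f(x)/\epsilon\bigr|\leq\mu$, concluding $\Delta_f[I]=\emptyset$. Your instinct that a genuine covering lemma is the irreducible measure-theoretic content here is the classically correct one --- the paper's factorization presupposes convergence of $\omega_x(\epsilon)/\epsilon$, which is the whole question --- but to be a proof your sketch needs the missing third case and a repaired treatment of $S_{\neq}$. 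For the latter, the set where both one-sided derivatives exist finitely but differ is known to be at most countable, so Theorem \ref{th:disconect} would dispose of it without any covering argument at all.
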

	\begin{proof}
		Let $\mathcal{D}_\omega f(x) = L >0 $.
		By Corr. \ref{corr:monotone} for $\epsilon:: \mu$
		\begin{flalign*}
			\left|\underbrace{ L -
				\frac{\deltaplus{f}{x} } {\modcont{x}{\epsilon}^{+} } \frac{\modcont{x}{\epsilon}}{\epsilon}^{+}
			}_{  A_\epsilon } \right| \leq    &\mu /2 \\
			\left|\underbrace{  \frac{\deltamin{f}{x} } {\modcont{x}{\epsilon}^{-} }\frac{\modcont{x}{\epsilon}^{-} }{\epsilon} - L
			}_{  B_\epsilon } \right|  \leq   & \mu /2 \\
			\left| \frac{\deltaplus{f}{x} } {\modcont{x}{\epsilon} } \frac{\modcont{x}{\epsilon}}{\epsilon}^{+} -
			\frac{\deltamin{f}{x} } {\modcont{x}{\epsilon}^{-} } \frac{\modcont{x}{\epsilon}^{-} }{\epsilon}
			\right|  &\leq |A_\epsilon| +|B_\epsilon| \leq \mu \\
			\left| \frac{\modcont{x}{\epsilon}}{\epsilon}^{+} -  \frac{\modcont{x}{\epsilon}^{-} }{\epsilon} \right|  \leq \mu 
		\end{flalign*}
		Therefore, by monotonicity using the original notation
		\[
		\left|  \frac{\deltaplus{f}{x} }{\epsilon} - \frac{\deltamin{f}{x} }{\epsilon} \right| = \left| \frac{\deltaop{f}{x}{\epsilon}{2} }{\epsilon} \right| \leq \mu
		\]
		hence $f^\prime_{+} (x) = f^\prime_{-} (x) $ and $\Delta_f [I] =\emptyset$.
	\end{proof}

	Recall the definitions of nowhere monotone functions:
	\begin{definition}\label{def:monotone}
		A function $f$ is non-decreasing  on $I=[a,b]$ if given any $a<x<y<b$
		\[
		f(y) - f(x) \geq 0
		\]
		and non-increasing on \textit{I} if 
		\[
		f(y) - f(x) \leq 0 .
		\]
		A function, which is neither  non-decreasing nor non-increasing changes direction of growth in $I$.
		A function is nowhere monotone (NM[I]) if given any $a<x<y<z<b$
		\[
		\left( f(y) - f(x)\right) \left( f(z) - f(y)\right) \leq 0 
		\]
		so that NM[I] function is  neither  non-decreasing nor non-increasing  on any sub-interval of \textit{I}. 
		A function, which is nowhere monotone at a point (NM[y]), is treated as above while $y$ is fixed.
	\end{definition}

	From the Lebesgue monotone differentiation theorem it follows  that a nowhere differentiable function on an  open interval \textit{I} is simultaneously nowhere monotone on \textit{I}.
	Brown et al. establish that no continuous function of bounded variation BVC is MN[y]. \cite{Brown1999}[Th. 12. Corr. 3].
	That is to say $NM[x]$ for $x \in I$ as above.
	Therefore, it is of interest to establish the following result.

	\begin{theorem}[NM continuous $\omega$-differentiability]
		Suppose that $f \cong \mathcal{C}[I]$ and $f \cong NM[I]$. 
		Then   
		$$\mathcal{D}^{\pm}_\omega f(x) \cong \mathcal{C}[I] \Longrightarrow 	\mathcal{D}^{\pm}_\omega f(x) =0$$
	\end{theorem}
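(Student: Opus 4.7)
The plan is to exploit the fact that $\mathcal{D}^{\pm}_\omega f$ can only take values in the discrete set $\{-1,0,+1\}$, as established by the Range of $\mathcal{D}^{\pm}_\omega$ corollary. Combined with the hypothesis $\mathcal{D}^{\pm}_\omega f \cong \mathcal{C}[I]$ and the connectedness of $I$, this forces the $\omega$-derivative to be constant on $I$: the continuous image of a connected set is connected, and the only connected subsets of $\{-1,0,+1\}$ are singletons. Hence $\mathcal{D}^{\pm}_\omega f \equiv c$ for some $c \in \{-1,0,+1\}$, and the whole task reduces to showing that $c$ cannot be $\pm 1$.

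Suppose, for contradiction, $c = +1$ (working with $\mathcal{D}^{+}_\omega$). Then for every $x \in I$ and every $\mu \in (0,1)$, there exists $\delta_x > 0$ such that $\left| \deltaplus{f}{x}/\modcont{x}{\epsilon} - 1 \right| < \mu$ for $0 < \epsilon < \delta_x$. Since $\modcont{x}{\epsilon} \geq 0$ by Proposition \ref{prop:oscmajor}, this yields $\deltaplus{f}{x} \geq (1-\mu)\,\modcont{x}{\epsilon} \geq 0$, i.e.\ $f(x+\epsilon) \geq f(x)$ in a right-neighborhood of each $x \in I$. Combining this pointwise local monotonicity with the hypothesis $f \cong \mathcal{C}[I]$, the classical ``locally non-decreasing from every point implies globally non-decreasing'' lemma then forces $f$ to be non-decreasing on $I$, directly contradicting $f \cong NM[I]$.

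The case $c = -1$ is entirely symmetric: apply the argument above to $-f$ (whose $\omega$-derivative at $x$ is $+1$), or equivalently observe that $c = -1$ yields $f(x+\epsilon) \leq f(x)$ in a right-neighborhood of every $x$, making $f$ non-increasing on $I$ and again violating $NM[I]$. The left-sided variant $\mathcal{D}^{-}_\omega$ is handled identically by reflecting the argument across $x$ using the backward increment $\deltamin{f}{x}$. Having excluded $\pm 1$, the only admissible constant value is $c = 0$, which is the desired conclusion.

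The principal obstacle is the passage from ``$f$ is locally non-decreasing from every point'' to ``$f$ is globally non-decreasing on $I$''. While standard, this step is where the continuity of $f$ becomes indispensable: the argument proceeds by showing that, for any $a < b$ in $I$, the set $\{y \in [a,b] : f(y) \geq f(a)\}$ is non-empty, closed in $[a,b]$ by continuity, and right-open under the local condition, so by connectedness it exhausts $[a,b]$. Every other ingredient of the proof is a direct consequence of results already established in the paper, so this lemma is the only non-trivial piece to write out carefully.
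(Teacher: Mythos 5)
Your proof is correct and follows essentially the same strategy as the paper's: exploit the discrete range $\{-1,0,+1\}$ of $\mathcal{D}^{\pm}_\omega f$, use continuity of the derivative on the connected interval to force (local) constancy, and use nowhere-monotonicity to exclude the values $\pm 1$. The paper's own proof is a three-line sketch that merely asserts the level sets $\{x:\mathcal{D}^{\pm}_\omega f(x)=\pm 1\}$ are totally disconnected; your version supplies the justification it omits --- namely the passage from $\mathcal{D}^{+}_\omega f\equiv 1$ to local, and then via the connectedness lemma to global, monotonicity of $f$ --- and is the more complete argument.
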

	\begin{proof}
		The set 
		$
		\{x: \mathcal{D}^{\pm}_\omega f(x)=1  \}
		$ is totally disconnected. 
		By duality, the set $
		\{x: \mathcal{D}^{\pm}_\omega f(x)=-1  \}
		$ is also totally disconnected.
		Hence, only the set $\{x: \mathcal{D}^{\pm}_\omega f(x)=0  \}$ has connected components.
	\end{proof}
	
	\begin{theorem}[Continuity of derivatives]\label{th:contderiv}
		Consider a bounded and continuous function $f$ on a compact interval $I$.
		Suppose that $f^\prime_{+} (x)$ and $f^\prime_{-} (x)$ are separately continuous
		then the following holds:
		\begin{enumerate}
			\item $
			f^\prime_{+} (x)= f^\prime_{-} (x) = f^\prime (x)
			$
			\item 	$\Delta_{f, I } := \{ x: f^\prime \notin \mathcal{C},  x \in I  \}$ is totally disconnected with empty interior. 
			\item The total discontinuity set can be written as 
			$
			\Delta_{f, I} =\Delta_{1, f} \cup \Delta_{2, f}
			$, where $\Delta_{1, f}$ is $F_\sigma$ and $\Delta_{2, f}$ is a null set.
			\item  The continuity set $\mathcal{C} _f$ is $G_\delta$.
		\end{enumerate}
		
	\end{theorem}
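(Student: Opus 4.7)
The plan is to unpack the four assertions in turn, using the $\omega$-differentiation machinery developed in the previous sections together with classical Baire-category facts about derivatives of continuous functions.

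First, for part (1), I would invoke the fundamental theorem of calculus. If $f^\prime_{+}$ is continuous on the compact interval $I=[a,b]$, the primitive $g(x) := f(a) + \int_a^x f^\prime_{+}(t)\,dt$ is $\mathcal{C}^1$ on $I$ with $g^\prime = f^\prime_{+}$. The continuous auxiliary function $h := f - g$ then has identically vanishing right derivative, and the standard lemma that a continuous function on $[a,b]$ whose right Dini derivatives vanish everywhere must be constant forces $h$ constant. Hence $f^\prime_{+}$ is the genuine two-sided derivative of $f$, and by the symmetric argument applied to $f^\prime_{-}$, the three quantities in (1) coincide.

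For parts (2)--(4), the main structural input is that $f^\prime$, being the pointwise limit of the continuous functions $\phi_n(x) := n\bigl(f(x+1/n)-f(x)\bigr)$, belongs to Baire class $1$ on $I$. By the Baire--Osgood theorem, the continuity set of a Baire-$1$ function on a complete metric space is a dense $G_\delta$; dually, the discontinuity set is a meager $F_\sigma$, which already yields the $F_\sigma$ component of (3), the empty-interior statement in (2), and assertion (4). Total disconnectedness of $\Delta_{f,I}$ in (2) would then follow from Theorem \ref{th:discont} applied to $f^\prime$ on subintervals where it is bounded, together with a covering argument to handle possible unboundedness near discontinuity points.

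For the decomposition in (3), I would split $\Delta_{f,I} = \Delta_{1,f}\cup\Delta_{2,f}$ according to the source of the discontinuity: $\Delta_{1,f}$ consists of points where $f^\prime$ exists finitely but is discontinuous, providing the $F_\sigma$ part inherited from the Baire-$1$ structure; $\Delta_{2,f}$ collects the points where the two one-sided $\omega$-derivatives fail to coincide, and is null by the $\omega$ non-differentiability theorem established earlier in Section \ref{sec:genderivomega}. The main obstacle I anticipate is the total-disconnectedness claim: although meager $F_\sigma$ immediately gives empty interior, excluding non-trivial connected components requires threading Theorem \ref{th:discont} through possibly unbounded behaviour of $f^\prime$ near discontinuity points while remaining compatible with the $\Delta_{1,f}/\Delta_{2,f}$ decomposition. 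Confirming that every discontinuity of $f^\prime$ lies in exactly one of the two pieces, and that the pieces retain their declared descriptive complexity, is the delicate part of the argument.
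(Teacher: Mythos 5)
Your proposal is essentially correct but reaches the conclusion by a genuinely different route in two places. For part (1) the paper does not use the fundamental theorem of calculus at all: it writes the difference quotient over $[v,u]$ as a convex combination $\frac{f(u)-f(x)}{u-x}(1-\lambda)+\frac{f(x)-f(v)}{x-v}\lambda$ and passes to the limits $u\rightarrow x$, $v\rightarrow x$ while exploiting the assumed continuity of $f^\prime_{+}$ and $f^\prime_{-}$; your FTC-plus-constancy-lemma argument is more classical and, frankly, more robust, since the paper's simultaneous limit with $\lambda$ held fixed is delicate, though your version needs $f^\prime_{+}$ to exist on a whole subinterval before the integral can be formed (which is how the paper also localizes, working on subintervals $I_r$ indexed by rationals). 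For parts (2)--(4) you invoke the Baire-class-1/Baire--Osgood machinery directly; the paper states exactly these facts only later (Section \rref{sec:gderivcont}) and instead proves the $F_\sigma$ and null-set claims of Theorem \rref{th:contderiv} by hand, via a rational enumeration of subintervals and a covering argument with $\delta_k=\delta/2^k$ applied to the ordinary difference quotients (its condition (D2)), together with Theorem \rref{th:discont}. Your decomposition $\Delta_{f,I}=\Delta_{1,f}\cup\Delta_{2,f}$ matches the paper's, except that you cite the $\omega$ non-differentiability theorem for the nullity of $\Delta_{2,f}$ whereas the paper reruns the identical covering argument with $\epsilon$ in place of $\modcont{x}{\epsilon}$ in the denominator; the transfer is immediate. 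Finally, the obstacle you flag at the end is not actually an obstacle: a meager subset of $\fclass{R}{}$ cannot contain a nondegenerate interval (Baire category), so every connected subset of $\Delta_{f,I}$ is a singleton and total disconnectedness follows at once, with no need for the covering argument near points of unbounded behaviour. What your route buys is rigour and brevity via standard descriptive set theory; what the paper's route buys is self-containedness within its own oscillation framework.
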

	\begin{proof}
		Consider the interval $I= [u, v]$.
		Then there is rational $ r  \in \fclass{Q}{} \cap I$.
		
		Associate $ (r, I) \equiv I_r$ so that $I_r$ can be counted by an enumeration of the rationals.
		
		Assume that $f^\prime_{+} (x)$ and $f^\prime_{-} (x)$ are separately continuous on the  opening
		of $I^\circ_r = I_r -\{u\} - \{v \} $.
		Fix $x$, such that $u \geq x > v$.
		\begin{flalign*}
			u > v &  \ \ \  u \geq x > v  \\
			\frac{f (u)- f(v)}{u-v} & = \frac{f (u)- f(x) +f(x) - f(v)}{u-v} = \\
			& \frac{f (u)- f(x)}{u-x} \underbrace{\frac{u-x}{u-v}}_{1- \lambda} +  \frac{f (x)- f(v)}{x-v} \underbrace{\frac{x-v}{u-v}}_{\lambda} = \\
			& \frac{f (u)- f(x)}{u-x} (1- \lambda) +  \frac{f (x)- f(v)}{x-v} \; \lambda \\
			& \ \ \ \ \downarrow \lim\limits_{u \rightarrow x} \ \ \ \ \ \ \ \ \ \ \ \ \ \ \ \ \ \ \ \ \ \downarrow \lim\limits_{v \rightarrow x} \\
			& (1-\lambda )f^\prime_{+} (x) + \lambda f^\prime_{-} (x) = f^\prime_{+} (x) - \lambda \left( f^\prime_{+} (x)  - f^\prime_{-} (x) \right) 
		\end{flalign*}
		By continuity 
		$$
		\llim{v}{x}{} f^\prime_{+} (v) = f^\prime_{+} (x) = f^\prime_{+} (x) - \lambda \left( f^\prime_{+} (x)  - f^\prime_{-} (x) \right) 
		$$
		However, since $x$ and hence $\lambda \neq 0 $ is arbitrary 
		$f^\prime_{+} (x)  = f^\prime_{-} (x) $ must hold $\forall x  \in I^\circ_r$. 
		Hence, $f^\prime$ is continuous on $I^\circ_r$.
		
		By this argument we establish that
		the set $\Delta_{1, f} := \{ x: f^\prime \not\cong \mathcal{C} \} \cap I$ is  $F_\sigma$,
		where we also assume that whenever $f^\prime (x)$ does not exist it is replaced by a value that makes  
		$f^\prime $ discontinuous. 
		By Th. \ref{th:discont}  the discontinuity set is totally disconnected and with empty interior.  
		
		Let us further consider the case wherever left and right derivatives do not exist (either diverge or oscillate without a limit).
		It is enough to consider the right derivative. 
		Then we have that for $0<u, v \leq \delta$ 
		\[
		\label{ineq:D1}
		\left| \frac{\deltaop {f}{x}{u}{+} }{u} -  \frac{\deltaop {f}{x}{v}{+} }{v} \right|  > \epsilon >0 \tag{D2}
		\]
		for some $\epsilon$.
		We can consider a variable $\xi \in [x, x+ u] \cap [x, x+ v] = [x, x+ \min (u,v)] =J $.
		There is a rational $r \in \fclass{Q}{} \cap J$.
		Associate $ (r, J) \equiv J_r$ so that $J_r$ can be counted by an enumeration of the rationals and index $\delta :: r$.
		Therefore, the set
		\[
		\Delta_{2, f}: = \bigcup_{k=1}^{\infty} \{z:  \rref{ineq:D1} \ \mathrm{true},  z \in J_k  \}
		\]
		is countable  $\forall \delta > 0$. Since it is totally disconnected by Th. \ref{th:discont} 
		we can select $\delta_k= \delta /2^k$.
		Therefore, 
		\[
		| \Delta_{2, f}| = \sum\limits_{k=1}^{\infty} |J_k | \leq \sum\limits_{k=1}^{\infty} \frac{\delta}{2^k} = \delta
		\]
		and
		$\Delta_{2, f}$ is a null set.
		
		The same argument can be applied to the left derivative considering $f(-x)$.
		
		The total discontinuity set can be written as 
		\[
		\Delta_{f, I} =\Delta_{1, f} \cup \Delta_{2, f}
		\]
		Therefore, the continuity set can be written as
		$
		\mathcal{C} _f =  \left( \Delta_{1, f} \cup \Delta_{2, f}\right)^c
		$
		hence it is $G_\delta$.
	\end{proof}

	 \section{Modular derivatives} 	
	 \label{sec:genderiv}
 
	 As indicated in Sec \ref{sec:intro}, the derivatives can be generalized in different directions. 
     If locality is the leading requirement, then the most natural way for such generalization is to replace the assumption of local Lipschitz growth with the more general modular-bound growth. 
	 In such way one can generalize, previously introduced  fractional velocity of Cherbit \cite{Cherbit1991}. 
 
 
	 \begin{definition}
	 	\label{def:fracvar}
	 	Define \textit{g-variation} operators  as
	 	\begin{align}
	 	\label{eq:fracvar1}
	 	\gvarpm {f}{x} := \frac{ \deltapm{f}{x}  }{ g(\epsilon)} 
	 	\end{align}
	 	for a positive  $\epsilon$ and a modular function $g$.
	 \end{definition}
	 \begin{condition}[Modulus-bound growth condition]
	 	For  given  $x$ and a modular function $g$.
	 	\begin{equation}\label{C1} 
	 	\mathrm{osc}_{\epsilon }^{\pm} f (x)  \leq C g \left( \epsilon \right)  \tag{C1}
	 	\end{equation}
	 	for some  $C \geq 0$ and $\epsilon > 0$.    
	 \end{condition}
	 \begin{condition}[Vanishing oscillation condition]
	 	For  given  $x$ and $\epsilon>0$ 	
	 	\begin{equation}\label{C2}
	 	\mathrm{osc}^{\pm} \gvarpm {f}{x} =0  \tag{C2}
	 	\end{equation}
	 	where the limit is taken in  $\epsilon$.
	 \end{condition}
	 
	 Define the modular derivative as:
	 \begin{definition}[Modular derivative, g-derivative]\label{def:genderiv}
	 	Consider an interval $[x, x \pm \epsilon]$ and define 
	 \begin{equation}\label{eq:gderiv}
			\gdiffpm{f}{x}   := \llim{\epsilon}{0}{ } \frac{\deltapm{f}{x}}{ g( \epsilon)}
	 \end{equation}	 
	 for a modulus of continuity $g(\epsilon)$.
	 The last limit will be called modular derivative or a g-derivative.
	 
	 NB! We do not demand equality of $ \mathcal{D}^{+}_{ g}  f(x) $ and $ \mathcal{D}^{-}_{ g}  f(x) $.
	 \end{definition}
	 
      We are ready to establish the existence conditions of the g-derivative.
  
 \begin{theorem}[Conditions for existence of g-derivative]\label{th:aexit}
 	If \gdiffplus {f}{x}  exists (finitely), then $f$ is right-continuous at $x$ and \ref{C1} holds, and the analogous result holds for \gdiffmin {f}{x}  and left-continuity.
 	
 	Conversely, if \ref{C2} holds then $\gdiffpm {f}{x} $ exists finitely.
 	Moreover, \ref{C2} implies \ref{C1}. 
 \end{theorem}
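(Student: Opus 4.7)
The strategy is to reduce both directions of the statement to a pointwise bound on the increment $\deltapm{f}{x}$ and then promote that bound to an interval oscillation estimate via a one-line triangle inequality; the converse (and hence \ref{C2} $\Rightarrow$ \ref{C1}) reduces to the First Oscillation Lemma applied in the variable $\epsilon$ to the difference quotient $\gvarpm{f}{x}$.

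\textbf{Forward direction.} Suppose $\gdiffplus{f}{x} = L$ exists finitely. The definition of the limit provides, for every $\mu > 0$, a $\delta > 0$ such that $|f(x+\epsilon)-f(x)| \leq (|L|+\mu)\, g(\epsilon)$ whenever $0 < \epsilon \leq \delta$. Since $g$ is a modulus of continuity with $g(0)=0$ and $g$ continuous, the right-hand side vanishes with $\epsilon$, giving right-continuity of $f$ at $x^{+}$ at once. For \ref{C1}, note that the same bound applies to every $\epsilon' \leq \epsilon \leq \delta$, so for any two points $u,v \in [x,x+\epsilon]$ both $|f(u)-f(x)|$ and $|f(v)-f(x)|$ are bounded by $(|L|+\mu)\, g(\epsilon)$ by monotonicity of $g$; the triangle inequality through the anchor $x$ then yields $|f(u)-f(v)| \leq 2(|L|+\mu)\, g(\epsilon)$, i.e.\ $\osc{f}{x}{\epsilon}{+} \leq C\, g(\epsilon)$ with $C = 2(|L|+\mu)$. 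The analogous claim for $\gdiffmin{f}{x}$ is symmetric, via $\epsilon \to -\epsilon$.

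\textbf{Converse, and \ref{C2} $\Rightarrow$ \ref{C1}.} Set $\phi_{\pm}(\epsilon) := \gvarpm{f}{x}$, viewed as a function of the positive variable $\epsilon$. Condition \ref{C2} says precisely that $\phi_{\pm}$ has vanishing one-sided oscillation as $\epsilon \to 0^{+}$; by the First Oscillation Lemma (Lemma \ref{th:osc}) applied in the variable $\epsilon$, the $\sup$ and $\inf$ of $\phi_{\pm}$ over shrinking right-neighborhoods of $0$ must collapse to a single finite value, which by Definition \ref{def:genderiv} is $\gdiffpm{f}{x}$. Hence the $g$-derivative exists finitely. Finiteness of the limit makes $\phi_{\pm}$ bounded on some right-neighborhood of $0$, say $|\phi_{\pm}(\epsilon)| \leq M$ for all sufficiently small $\epsilon$; this rewrites as $|\deltapm{f}{x}| \leq M\, g(\epsilon)$, and the triangle-inequality step from the forward direction upgrades it to $\osc{f}{x}{\epsilon}{\pm} \leq 2M\, g(\epsilon)$, which is \ref{C1}.

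\textbf{Main obstacle.} The only non-routine point is that existence of $\gdiffpm{f}{x}$, and likewise \ref{C2}, directly controls only the endpoint increment $|\deltapm{f}{x}|$, whereas \ref{C1} asks for the sup-minus-inf of $f$ over the full interval $[x,x\pm\epsilon]$. The resolution is the observation that the increment bound holds uniformly for every smaller $\epsilon' \leq \epsilon$, so splitting $|f(u)-f(v)|$ through the anchor $x$ absorbs the entire interval oscillation into the estimate at the harmless cost of a factor $2$.
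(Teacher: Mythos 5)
Your proof is correct and follows essentially the same route as the paper's: the forward direction rearranges the limit definition and uses the (reverse) triangle inequality to extract the growth bound, and the converse applies the Oscillation Lemma to the quotient $\gvarpm{f}{x}$ viewed as a function of $\epsilon$ so that its $\sup$ and $\inf$ collapse. If anything you are more careful than the paper at one point: the paper silently identifies the endpoint increment bound $|\deltapm{f}{x}| \leq C\,g(\epsilon)$ with the oscillation bound of \ref{C1}, whereas you explicitly bridge that gap by splitting $|f(u)-f(v)|$ through the anchor $x$ and using monotonicity of $g$, at the harmless cost of a factor $2$ in the constant.
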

 \begin{proof}  
 	We will first prove the case for right continuity.
 	Condition C1 trivially implies the g- continuity, which according to our notation is given  as  $\gvarpm {f}{x} \leq C g(\epsilon)$.   
 	\begin{description}
 		\item[Forward statement]
 		~\\
 		Without loss of generality suppose that $L>0$ is the value of the limit. Then by hypothesis  
 		\[
 		\left| 	\frac{\deltaplus{f}{x}}{g(\epsilon)} - L \right|  < \mu
 		\]
 		holds for every $\mu:: \delta, \epsilon < \delta$ .
 		Straightforward rearrangement gives
 		\[
 		\left| 	f(x+ \epsilon) - f(x) - L g(\epsilon) \right|  < \mu\; g(\epsilon) 
 		\]
 		Then by the reverse triangle inequality
 		\[
 		\left| 	f(x+ \epsilon) - f(x)   \right| - L g(\epsilon) \leq
 		\left| 	f(x+ \epsilon) - f(x) - L g(\epsilon) \right|  < \mu \, g(\epsilon) 
 		\]
 		so that 
 		$
 		\left| 	f(x+ \epsilon) - f(x)   \right| < \left( \mu + L  \right)  g(\epsilon)
 		$.
 		Further,  by the least-upper-bound property there exists a number $C \leq \mu + L $, such that
 		\[
 		\left| 	f(x+ \epsilon) - f(x)   \right| \leq C g(\epsilon),
 		\]
 		which is precisely the Modulus bound growth condition. 
 		The left continuity can be proven in the same way.
 		\item[Converse statement]
 		~\\
 		In order to prove the converse statement we can observe that  condition \ref{C2} implies that
 		$
 		\mathrm{osc}^{+} \gvarplus {f}{x} =0 
 		$
 		so that 
 		\[
 		\mathrm{osc}^{+}_{\epsilon }  \frac{\deltaplus{f}{x}     }{g(\epsilon)} \leq \mu
 		\]
 		for $ \mu::\epsilon$ (and in particular for a Cauchy null-sequence $\mu$)
 		so that 
 		\[
 		\left|\sup_\epsilon \frac{\deltaplus{f}{x} }{g(\epsilon)} - \inf_\epsilon   \frac{\deltaplus{f}{x} }{g(\epsilon)} \right|  \leq \mu
 		\]
 		by lemma \ref{th:osc} and 
 		\[
 		\sup_\epsilon \frac{\deltaplus{f}{x} }{g(\epsilon)} \leq \mu + \inf_\epsilon   \frac{\deltaplus{f}{x} }{g(\epsilon)},
 		\]
 		so that taking the limits in  $\mu$ (and hence $\epsilon$) implies 
 		\[
 		\limsup\limits_{\epsilon \rightarrow 0 }  \frac{\deltaplus{f}{x} }{g(\epsilon)}    = \liminf\limits_{\epsilon \rightarrow 0 }    \frac{\deltaplus{f}{x} }{g(\epsilon)a}  
 		\]
 		Hence $\llim{\epsilon}{0}{ \gvarplus {f}{x} } = L =\gdiffplus{f}{x}$
 		for some real number $L$.

 		However, the latter limit can be rewritten from its definition as
 		\[
 		\left| \frac{\Delta_\epsilon^{+} f (x) - L g(\epsilon)}{g(\epsilon)} \right| < \mu  
 		\]
 		for an arbitrary $\mu :: \epsilon$.
 		Then since $\mu$ is arbitrary by the least upper bound property there is
 		$\epsilon^\prime$, such that
 		\[
 		\left| \Delta_{\epsilon^\prime}^{+} f (x) \right| = \osc{f}{x}{\epsilon^\prime}{+} \leq (\mu +L ) g(\epsilon^\prime)  
 		\]   
 		for $\mu::\epsilon^\prime$  
 		and we identify condition \ref{C1}. 
 	\end{description}
 	
 	The left case follows by applying the right case, just proved, to the reflected function $f(-x)$.
 \end{proof}

\subsection{Generalized Taylor-Lagrange property}\label{sec:gtaylor2}

\begin{proposition}[Generalized Taylor-Lagrange property]
	\label{th:holcomp1}
	The existence of $\gdiffpm{f}{x}\neq 0$   implies that
	\begin{equation}\label{eq:frtaylorlag}
	f(x \pm \epsilon) = f(x) \pm \gdiffpm {f}{x}   g(\epsilon) + \bigoh{g({\epsilon} ) }   
	\end{equation}
	for the modular function $g$.
	While if  
	\[
	f(x \pm \epsilon)= f(x) \pm K g(\epsilon) +\gamma_\epsilon \; g( \epsilon)  
	\] 
	uniformly in  the interval $ x \in [x, x+ \epsilon]$ for some Cauchy sequence $\gamma_\epsilon = \bigohx{x}$ and $K \neq 0$ is constant in $\epsilon$  then $\gdiffpm {f}{x}  = K$.
\end{proposition}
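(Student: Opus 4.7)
The plan is to unpack the limit definition of the modular derivative from Definition \ref{def:genderiv} for the forward direction, and then to invert the same bookkeeping for the converse. Both steps are rearrangements of the sort that appears in the existence proof of Theorem \ref{th:aexit}, so no genuinely new machinery is required.

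For the forward direction I would fix $x$ and set $L := \gdiffpm{f}{x} \neq 0$. By the very definition of the limit, for every $\mu > 0$ there exists $\delta > 0$ such that
\[
\left| \frac{\deltapm{f}{x}}{g(\epsilon)} - L \right| < \mu, \qquad 0 < \epsilon < \delta.
\]
Multiplying through by $g(\epsilon) > 0$ gives $|\deltapm{f}{x} - L\, g(\epsilon)| < \mu\, g(\epsilon)$. Setting $r_\epsilon := \deltapm{f}{x} - L\, g(\epsilon)$, the bound $|r_\epsilon|/g(\epsilon) < \mu$ with $\mu$ arbitrary forces $r_\epsilon = \bigoh{g(\epsilon)}$. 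I would then unfold the increments $\deltaplus{f}{x} = f(x+\epsilon) - f(x)$ and $\deltamin{f}{x} = f(x) - f(x-\epsilon)$ and solve for $f(x \pm \epsilon)$, which gives the stated Taylor--Lagrange expansion.

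For the converse I would assume the expansion $f(x \pm \epsilon) = f(x) \pm K\, g(\epsilon) + \gamma_\epsilon\, g(\epsilon)$ with $\gamma_\epsilon \to 0$ as $\epsilon \to 0^+$. Rearranging,
\[
\frac{\deltaplus{f}{x}}{g(\epsilon)} = K + \gamma_\epsilon, \qquad \frac{\deltamin{f}{x}}{g(\epsilon)} = K - \gamma_\epsilon,
\]
so in both cases $\gvarpm{f}{x} \to K$ as $\epsilon \to 0^+$. Hence the limit in Definition \ref{def:genderiv} exists and $\gdiffpm{f}{x} = K$, as required.

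The main obstacle is not computational but interpretational. One must match the sign convention $\deltamin{f}{x} = f(x) - f(x-\epsilon)$ to the Taylor-style expansion so that the minus sign lands on the leading term $K\, g(\epsilon)$ rather than on the remainder; otherwise the converse identification of $K$ with $\gdiffmin{f}{x}$ would acquire an incorrect sign. The uniformity of the Cauchy null sequence $\gamma_\epsilon = \bigohx{x}$ in the hypothesis is used only mildly here, since pointwise convergence $\gamma_\epsilon \to 0$ at the fixed point $x$ is enough to pass to the limit in the rearrangement above. The assumption $K \neq 0$ plays no role in the argument; it is kept in the statement only to emphasize that the expansion has a genuine leading term.
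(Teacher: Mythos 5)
Your proof is correct and follows essentially the same route as the paper's: the forward direction is the same unpacking of the limit definition of $\gdiffpm{f}{x}$, and your converse simply computes the limit of the difference quotient directly, where the paper instead notes that the assumed expansion verifies conditions \ref{C1} and \ref{C2} and appeals to Theorem \ref{th:aexit} --- the underlying rearrangement is identical. Your closing observations (that pointwise vanishing of $\gamma_\epsilon$ at the fixed $x$ suffices, and that $K\neq 0$ is not actually used in the argument) are accurate and slightly sharper than the paper's own terse treatment.
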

\begin{proof}
	We prove only for the forward modular derivative. 	
	The case for the backward modular derivative is proven in the same way following a reflection of the function argument \textit{x}.
	\begin{description}
		\item[Forward statement]  
		By the definition of the modular derivative $\exists \gamma$, such that
		$
		f(x +\epsilon)= f(x) + \gdiffplus {f}{x}  g(\epsilon) +\gamma   
		$.
		Moreover,  $\gamma = \bigoh{g(\epsilon) }  $.
		
		\item[Converse statement]
		Suppose that 
		\[
		f(x +\epsilon)= f(x) + K g(\epsilon) +\gamma_\epsilon \; g(\epsilon)
		\] 
		uniformly in  the interval $ x \in [x, x+ \epsilon]$ for some number $K$ and  $\gamma_\epsilon = \bigohx{x}$.
		Then this fulfils both Modulus bound growth and Vanishing oscillation conditions. 
		Therefore, 	$ K = \gdiffplus{f}{x}$ observing that \llim{\epsilon}{0}{ \gamma_\epsilon}=0.
	\end{description}
\end{proof}

\subsection{Characterization by $\omega$-derivatives}\label{sec:ommegachar}
\begin{proposition}\label{prop:omegaequiv}
	Consider the modular function $g$. Then
	\[
	\wdiffpm{f}{x} = K \gdiffpm{f}{x}
	\]
	for some constant $K$ wherever all limits exist.
\end{proposition}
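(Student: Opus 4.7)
The plan is to exploit the algebraic identity
\[
\frac{\deltapm{f}{x}}{\modcont{x}{\epsilon}} \;=\; \frac{\deltapm{f}{x}}{g(\epsilon)} \cdot \frac{g(\epsilon)}{\modcont{x}{\epsilon}}
\]
and then pass to the limit $\epsilon \to 0$ using the multiplicative rule for limits. First I would invoke the hypothesis that both modular derivatives exist as finite numbers, i.e.\ $\deltapm{f}{x}/g(\epsilon) \to \gdiffpm{f}{x}$ and $\deltapm{f}{x}/\modcont{x}{\epsilon} \to \wdiffpm{f}{x}$. Provided $\gdiffpm{f}{x} \neq 0$, the increments $\deltapm{f}{x}$ are eventually non-zero, so I may invert and form the ratio
\[
\frac{g(\epsilon)}{\modcont{x}{\epsilon}} \;=\; \frac{\deltapm{f}{x}/\modcont{x}{\epsilon}}{\deltapm{f}{x}/g(\epsilon)},
\]
whose right-hand side converges to $\wdiffpm{f}{x}/\gdiffpm{f}{x}$ by the quotient rule. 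Calling this limit $K$, reinsertion in the first identity yields $\wdiffpm{f}{x} = K \gdiffpm{f}{x}$ on the nose.

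To tie the constant $K$ to the intrinsic data of the two moduli, I would identify $1/K$ with the growth constant $C_x = \lim_{\epsilon \to 0} \modcont{x}{\epsilon}/g(\epsilon)$ of Definition \ref{def:grclass}. Indeed, existence of $\gdiffpm{f}{x}$ forces the modulus-bound growth condition \ref{C1} by Theorem \ref{th:aexit}, while $\modcont{x}{\cdot}$ is the canonical modulus of Theorem \ref{th:modcont}; the above quotient argument then shows $C_x$ is finite and, whenever $\wdiffpm{f}{x} \neq 0$, also non-zero, so $f \cong \fcclass{C}{g}_x$ is consistent with the setting.

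The main obstacle, and the only real case distinction, is the degenerate situation $\gdiffpm{f}{x} = 0$: here the quotient rule cannot be applied directly. In that case I would argue separately that $\wdiffpm{f}{x} = 0$ as well, so that the identity holds trivially for any constant $K$. Concretely, $\gdiffpm{f}{x} = 0$ means $\deltapm{f}{x} = \smallO(g(\epsilon))$; combining this with the bound $\modcont{x}{\epsilon} \geq |\deltapm{f}{x}|$ from Proposition \ref{prop:oscmajor} and the finiteness of $C_x$ (where it exists) gives $\deltapm{f}{x}/\modcont{x}{\epsilon} \to 0$, hence $\wdiffpm{f}{x} = 0$. The proposition as stated only requires the equality where all limits exist, so this degenerate case completes the picture.
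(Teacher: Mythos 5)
Your main case is essentially the paper's own argument run in the opposite direction: the paper writes $\deltaplus{f}{x}/g(\epsilon) = \left(\deltaplus{f}{x}/\modcont{x}{\epsilon}\right)\cdot\left(\modcont{x}{\epsilon}/g(\epsilon)\right)$ and simply \emph{postulates} that $K=\lim_{\epsilon\to 0}\modcont{x}{\epsilon}/g(\epsilon)$ exists finitely, concluding $\gdiffpm{f}{x}=K\,\wdiffpm{f}{x}$; you instead derive the existence of the ratio limit from the two derivative limits via the quotient rule. That is a reasonable, arguably more self-contained, reading of ``wherever all limits exist,'' and in the non-degenerate case $\gdiffpm{f}{x}\neq 0$ your argument is correct.

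The gap is in the degenerate case. The implication $\gdiffpm{f}{x}=0\Rightarrow\wdiffpm{f}{x}=0$ is false without an additional hypothesis: take $f(x)=x$ and $g(\epsilon)=\sqrt{\epsilon}$ at $x=0$. Then $\modcont{0}{\epsilon}=\epsilon$, so $\wdiffplus{f}{0}=1$ while $\gdiffplus{f}{0}=0$, and no finite $K$ satisfies $1=K\cdot 0$. Your patch invokes ``finiteness of $C_x$,'' but what you actually need is $C_x=\lim_{\epsilon\to 0}\modcont{x}{\epsilon}/g(\epsilon)$ finite \emph{and non-zero} (as in Definition \ref{def:grclass}); in the counterexample $C_x=0$ and your step $\deltapm{f}{x}/\modcont{x}{\epsilon}\to 0$ fails --- that ratio is identically $1$ there, and the majorization $\modcont{x}{\epsilon}\geq|\deltapm{f}{x}|$ only bounds it by $1$, not by $0$. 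So you must either add the hypothesis $f\cong\fcclass{C}{g}_x$ (i.e.\ $0<C_x<\infty$), or prove the identity in the orientation the paper's proof actually establishes, namely $\gdiffpm{f}{x}=K\,\wdiffpm{f}{x}$ with $K=\lim_{\epsilon\to 0}\modcont{x}{\epsilon}/g(\epsilon)$, which survives $K=0$. (The statement and the paper's own proof in fact disagree about which of these two reciprocal identities is being asserted; your difficulty is a symptom of that.) As written, your degenerate case does not close.
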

\begin{proof}
	Let $$K = \llim{\epsilon}{0}{\frac{\omega_{x} ( \epsilon)}{g (\epsilon)}}$$ and suppose that the limit exists as a finite number.
	Then
	\[
	\frac{\deltaplus{f}{x}}{g (\epsilon)} = \frac{\deltaplus{f}{x}}{\modcont{x}{\epsilon }} \frac{\modcont{x}{\epsilon }}{g (\epsilon)}
	\]
	Therefore, the statement of the result follows.
\end{proof}
In view of Prop. \ref{th:omegamono} this means that a function can change its modulus of continuity point-wise. 
Since the cases of H\"older and Lipschitz functions have been treated extensively in literature we will consider only the general case.

\subsection{Continuity of $g$-derivatives}\label{sec:gderivcont}

Gleyzal \cite{Gleyzal1941} established that a function is Baire class I if and only if it is the limit of an interval function.
Therefore, \gdiffpm {f}{x}  are Baire class I from which it follows that \gdiffpm {f}{x} must be continuous on a dense set. 
Moreover, since the continuity set of a function is a $G_{\delta}$ set, (i.e. an intersection of at most countably many open sets), 
from the Osgood-Baire Category theorem  it follows that the set of points of discontinuity of
\gdiffpm {f}{x} is $F_\sigma$ meagre (i.e. a union of at most countably many nowhere dense sets or else  it has empty interior).

Since in the previous sections it was established that the modulus of continuity can be conveniently classified as used conventionally in applied literature we are ready to state an important result concerning the continuity of $g$-derivatives.
First, we have the following theorem:

\begin{theorem}[Continuity of $g$-derivatives]\label{th:contgderiv}
	Suppose that $g$ is a \textbf{strictly sub-additive} modular function on the compact interval I.
	Then wherever \gdiffpm{f}{x} is continuous it is zero. 
	That is 
	\[
	\gdiffpm{f}{x} \cong \mathcal{C}[I] \Rightarrow \gdiffpm{f}{x}=0
	\]
\end{theorem}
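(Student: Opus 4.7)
The plan is to argue the forward case by contradiction; the backward case follows by applying the forward case to $t \mapsto f(-t)$. Suppose $\gdiffplus{f}{x_0} = L \neq 0$ at some $x_0 \in I^\circ$ at which $\gdiffplus{f}{\cdot}$ is continuous; without loss of generality take $L > 0$. Continuity at $x_0$ yields a neighborhood $(x_0 - \delta, x_0 + \delta) \subset I$ on which $\gdiffplus{f}{x}$ exists and satisfies $\gdiffplus{f}{x} > L/2$.

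Next I would invoke the Generalized Taylor--Lagrange property (Prop.~\ref{th:holcomp1}) pointwise: for each $x$ in this neighborhood there is a threshold $\epsilon_0(x) > 0$ such that $f(x+\epsilon) - f(x) \geq \tfrac{L}{4}\, g(\epsilon)$ whenever $0 < \epsilon < \epsilon_0(x)$. To promote this pointwise bound to one valid uniformly on an open subinterval, introduce the nested sets
\[
E_n := \bigl\{\, x \in [x_0 - \tfrac{\delta}{2}, x_0 + \tfrac{\delta}{2}] :\ f(x+\epsilon) - f(x) \geq \tfrac{L}{4}\, g(\epsilon) \ \text{for every } \epsilon \in (0, 1/n] \,\bigr\}.
\]
Each $E_n$ is closed by continuity of $f$, and $\bigcup_n E_n$ covers the compact interval $[x_0 - \delta/2, x_0 + \delta/2]$. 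The Baire category theorem then supplies an index $N$ for which $E_N$ contains a nondegenerate open interval $(c, d)$.

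The contradiction comes by telescoping over $(c, d)$. Fix $x_1 < x_2$ in $(c, d)$ with $h := x_2 - x_1$ and $[x_1, x_2] \subset (c, d)$; for every integer $m$ with $h/m \leq 1/N$, each partition point $x_1 + kh/m$ lies in $E_N$, so
\[
f(x_2) - f(x_1) = \sum_{k=0}^{m-1} \left[ f\!\bigl(x_1 + \tfrac{(k+1)h}{m}\bigr) - f\!\bigl(x_1 + \tfrac{kh}{m}\bigr) \right] \geq \frac{L\, m}{4}\, g\!\left(\tfrac{h}{m}\right).
\]
By Prop.~\ref{prop:classif}, strict sub-additivity forces $g'(0) = \infty$, whence $g(\epsilon)/\epsilon \to \infty$ as $\epsilon \to 0^+$. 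Therefore $m\, g(h/m) = h \cdot g(h/m)/(h/m) \to \infty$ as $m \to \infty$, while $f(x_2) - f(x_1)$ is a fixed finite number---this is the desired contradiction, and forces $L = 0$.

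The principal obstacle is precisely the step from a pointwise Taylor--Lagrange bound to one uniform across a full subinterval; without such uniformity, a telescoped sum of $m$ point-dependent remainders of size $o(g(h/m))$ cannot be controlled. The Baire-category argument sidesteps this cleanly by exploiting closedness of the $E_n$, which rests only on continuity of $f$, and notably does not require any discreteness of the range of $\gdiffpm{f}{\cdot}$.
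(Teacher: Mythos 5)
Your proof is correct as a derivation from the paper's stated results, and it takes a genuinely different route from the paper's own argument. The paper's proof is a local midpoint-splitting: it decomposes $\deltaplus{f}{x}$ into two increments of step $\epsilon/2$, uses the assumed continuity of $\gdiffplus{f}{\cdot}$ to identify each half-quotient with $K$ in the limit, and extracts $K = GK$ with $G = \limsup_{\epsilon \to 0} 2g(\epsilon/2)/g(\epsilon)$, so its contradiction rests on this doubling ratio being bounded away from $1$. You instead upgrade the pointwise bound $\deltaplus{f}{x} \geq \tfrac{L}{4}\, g(\epsilon)$ to a uniform one on a subinterval by Baire category, telescope over an equispaced partition, and obtain the contradiction from $m\, g(h/m) \to \infty$, i.e.\ from $\lim_{\epsilon \to 0} g(\epsilon)/\epsilon = \infty$. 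This is the more robust formulation: the only property of $g$ your argument consumes is the divergence of $g(\epsilon)/\epsilon$, which is exactly what Prop.~\ref{prop:classif} asserts for strictly sub-additive moduli, whereas control of $2g(\epsilon/2)/g(\epsilon)$ is a strictly stronger, doubling-type requirement. Be aware, though, that your proof thereby concentrates all of its dependence on $g$ in Prop.~\ref{prop:classif}: a function such as $g(\epsilon) = \epsilon - \epsilon^2$ is strictly sub-additive near $0$ yet has $g^\prime(0) = 1$ finite, so strict sub-additivity alone does not deliver $g(\epsilon)/\epsilon \to \infty$, and both your argument and the paper's stand or fall with that proposition. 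Two smaller points: the closedness of your sets $E_n$ uses two-sided continuity of $f$, which is harmless here since $f$ admits $g$ as a modulus of continuity but does not follow merely from the existence of the one-sided $g$-derivative (Th.~\ref{th:aexit} gives only one-sided continuity); and the pointwise lower bound you attribute to the Taylor--Lagrange property is really just the definition of the limit $\gdiffplus{f}{x} > L/2$, so no appeal to Prop.~\ref{th:holcomp1} is needed.
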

\begin{proof}
	Let $\gdiffplus{f}{x}=K>0$. 
	  \begin{flalign*}
	    \frac{\deltaplus{f}{x}}{g(\epsilon) }  & = \frac{ f (x+ \epsilon) - f(x + \epsilon/2) }{ g(\epsilon)} + 
	    \frac{ f (x ) - f(x - \epsilon/2) }{ g(\epsilon)}= \\
	    & = \frac{ f (x+ \epsilon) - f(x + \epsilon/2) }{ g(\epsilon/2)} \frac{g(\epsilon/2)}{g(\epsilon)} + 
	    \frac{ f (x ) - f(x - \epsilon/2) }{g(\epsilon/2) } \frac{g(\epsilon/2)}{g(\epsilon)}
	  \end{flalign*}
	  Therefore, in  limit supremum and by hypothesis of continuity
	  \[
	  K=  K \underbrace{ \limsup\limits_{ \epsilon \rightarrow 0} \frac{ 2\, g(\epsilon/2)}{g(\epsilon)}}_G
	  \]
	  By strict sub-additivity $2 g(\epsilon/2) / g(\epsilon) < 1$,  therefore, the limit $G$ exists.
	  So it is established that
	  $ K = G K < K $, which is a contradiction.
	  Therefore, $K=0$ on the first place. 
	  The case for the left derivative  follows by duality.  
\end{proof}

\begin{corollary}
	The continuity requirement is equivalent to requiring that
	$$\llim{\epsilon}{0}{  \frac{ 2 \, g^\prime (\epsilon/2 ) }{g^\prime (\epsilon)}} = 1$$
\end{corollary}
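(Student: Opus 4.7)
The plan is to extract the key identity from the proof of Theorem \ref{th:contgderiv}, namely the relation $K = G\,K$ obtained after splitting the difference quotient across $x + \epsilon/2$, where
\[
G := \limsup_{\epsilon \to 0} \frac{2\,g(\epsilon/2)}{g(\epsilon)} .
\]
In that proof, strict sub-additivity forced $G \neq 1$ and therefore $K = 0$. For $\gdiffpm{f}{x}$ to be continuous and possibly nonzero, the identity $K = G K$ must admit nonzero solutions, which happens precisely when $G = 1$. So the first step is to record that the continuity requirement is equivalent to
\[
\lim_{\epsilon \to 0} \frac{2\,g(\epsilon/2)}{g(\epsilon)} = 1 ,
\]
the upper limit being replaced by an ordinary limit once equality is enforced.

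The second step is to translate this growth-type identity into the differential form claimed in the corollary. Since $g$ is a modulus of continuity, $g(0)=0$, so both numerator and denominator tend to zero as $\epsilon \to 0^{+}$, giving a genuine $0/0$ indeterminate form. One applies the one-sided L'Hôpital rule: differentiate $2\,g(\epsilon/2)$ and $g(\epsilon)$ with respect to $\epsilon$, compare the resulting ratio of derivatives at the scales $\epsilon/2$ and $\epsilon$, and equate the transformed limit to $1$. This directly yields the displayed relation on $g'$.

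The principal obstacle is justifying L'Hôpital's rule, since $g$ is only assumed to be a modulus of continuity (non-decreasing, continuous, vanishing at $0$) and need not be differentiable everywhere near $0$. For the Lipshitz and Hölder-type subclasses discussed after Prop.~\ref{prop:classif}, the one-sided derivative $g'(0^{+})$ exists in the appropriate extended sense, and $g$ is smooth on $(0, h]$ for some $h>0$, so the hypotheses of the one-sided L'Hôpital theorem are met. For general moduli one must argue that the very continuity requirement on $\gdiffpm{f}{x}$ already forces enough regularity on $g$ (through Prop.~\ref{prop:omegaequiv} and Th.~\ref{th:modcont}) to make the differentiated ratio meaningful along a dense sequence $\epsilon \to 0$. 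Once the L'Hôpital step is legitimate, the equivalence is immediate, and the converse direction follows from integrating $g'$ over $[\epsilon/2, \epsilon]$ and using continuity to recover the $2\,g(\epsilon/2)/g(\epsilon) \to 1$ condition.
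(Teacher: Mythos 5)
The paper states this corollary without proof, so your reconstruction can only be measured against the argument implicit in Theorem \ref{th:contgderiv}. Your first step --- reading ``the continuity requirement'' as the demand that the identity $K = GK$ admit a nonzero solution, i.e. $G := \lim_{\epsilon\to 0} 2g(\epsilon/2)/g(\epsilon) = 1$ --- is exactly that implicit argument and is fine. The problems are in the translation to the displayed formula. First, a factor of $2$: $\frac{d}{d\epsilon}\bigl[2g(\epsilon/2)\bigr] = g'(\epsilon/2)$, not $2g'(\epsilon/2)$, so the L'H\^opital quotient associated with $G$ is $g'(\epsilon/2)/g'(\epsilon)$, and the condition it produces is $\lim_{\epsilon\to 0} g'(\epsilon/2)/g'(\epsilon) = 1$. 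Your claim that differentiation ``directly yields the displayed relation'' is therefore false as written: the test case $g(\epsilon)=\epsilon$ (the one case in which a continuous nonzero $g$-derivative certainly exists) gives $2g(\epsilon/2)/g(\epsilon) \equiv 1$ but $2g'(\epsilon/2)/g'(\epsilon) \equiv 2$, and $g(\epsilon)=\epsilon^\beta$ satisfies the displayed identity only for $\beta=2$, outside the admissible range. Either the corollary carries a typographical factor of $2$ or it is simply wrong; a proof attempt should detect this rather than reproduce it.

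Second, the direction of L'H\^opital. The rule transfers existence of the limit of the derivative quotient to the function quotient, never the reverse, so the implication you need for the forward half of the equivalence --- from $G=1$ to the derivative condition --- is precisely the invalid direction. Your converse step (integrate $g'$ over $[\epsilon/2,\epsilon]$ and recover the ratio condition) is the legitimate direction and is fine modulo the factor of $2$. For the forward direction you would need either an additional regularity hypothesis on $g'$ near $0$ (monotonicity or regular variation, which does hold for the H\"older-type moduli classified in Section \ref{sec:classmod}) or a restatement of the corollary entirely in terms of $g$ rather than $g'$. You correctly flag that differentiability of $g$ is not guaranteed, but the deeper obstruction is that even where $g'$ exists everywhere, the function-ratio limit does not control the derivative-ratio limit.
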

 
 \begin{theorem}\label{th:mesgderiv}
 	Consider a function $f$ having a strictly sub-additive modulus function $g$ on the compact interval \textit{I}. 
 	Then the set 
 	\[
 	\chi_g^{\pm} (f):= \{x:  \gdiffpm{f}{x} \neq 0  \} \cap I
 	\] 
 	is totally disconnected and of measure zero, that is
 	$|\chi_g^{\pm} (f)| = 0$.
 	The set $\chi_g^{\pm} $ will be called the set of change of $f$.
 \end{theorem}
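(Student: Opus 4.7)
The plan is to derive both conclusions from the immediately preceding Theorem \ref{th:contgderiv} combined with the rational-enumeration covering technique used in Theorems \ref{th:discont} and \ref{th:contderiv}. The key observation is that Theorem \ref{th:contgderiv} is a strong rigidity statement: for a strictly sub-additive $g$, continuity of $\gdiffpm{f}$ at a point $x$ forces $\gdiffpm{f}{x}=0$. Taking the contrapositive, every point of $\chi_g^{\pm}(f)$ must lie in the discontinuity set $\Delta[\gdiffpm{f}, I]$ of the (Baire class I) function $\gdiffpm{f}$. This inclusion is the first and most important step.

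For the topological claim, I would then apply Theorem \ref{th:discont}, which asserts that the discontinuity set of a bounded function on a compact interval is totally disconnected. Stratifying $\chi_g^{\pm}(f)$ by the level sets $\chi_g^{\pm, n}(f):=\{x\in I: |\gdiffpm{f}{x}|\in (1/(n+1), 1/n]\}$ handles any boundedness worries, since each level set is a subset of a discontinuity set of a bounded truncation of $\gdiffpm{f}$. Total disconnection is inherited by subsets, so $\chi_g^{\pm}(f)=\bigcup_n \chi_g^{\pm,n}(f)$ is totally disconnected.

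For the measure claim I would adapt the rational-enumeration covering argument from the second half of the proof of Theorem \ref{th:contderiv}. Fix $n$ and consider $x\in \chi_g^{\pm,n}(f)$. By Theorem \ref{th:contgderiv} and the proof of Theorem \ref{th:omegadiff}, the oscillation of $\gvarpm{f}{x}$ witnesses a jump of size at least $1/(n+1)$, so there exist small $u,v>0$ for which $\gvarpm{f}{x}$ differs by more than $1/(2n)$ on the two scales. For each such $x$ choose a rational $r\in \mathbb{Q}\cap [x, x+\min(u,v)]$, associate the pair $(r, J_r)$, and index the resulting countable collection by $k$. Using the total disconnectedness from the previous step we may take the enumerating intervals $J_k$ pairwise disjoint with lengths $\delta_k=\delta/2^k$, so $|\chi_g^{\pm,n}(f)|\leq \sum_k |J_k|\leq \delta$ for arbitrary $\delta>0$; hence each $\chi_g^{\pm,n}(f)$ is null, and a countable union of null sets gives $|\chi_g^{\pm}(f)|=0$.

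The main obstacle I anticipate is the correct setup of the covering step: one must guarantee that the intervals $J_k$ can simultaneously (i) be chosen disjoint (using the total disconnectedness of the discontinuity set), (ii) cover $\chi_g^{\pm,n}(f)$, and (iii) be indexed by distinct rationals so that the geometric series bound $\sum \delta/2^k = \delta$ applies. The first two requirements lean on Theorem \ref{th:discont}, while the third requires verifying that the map $x\mapsto (r, J_r)$ is injective after refining the scale; this is precisely the combinatorial core of the proof of Theorem \ref{th:contderiv} and can be transplanted with no essential modification.
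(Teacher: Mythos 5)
Your proposal follows essentially the same route as the paper: the paper's own proof likewise reduces $\chi_g^{\pm}(f)$ to the discontinuity set of \gdiffpm{f}{x} via Theorem \ref{th:contgderiv} and then appeals to Theorem \ref{th:contderiv} (whose rational-enumeration covering argument you reproduce) for the null-measure conclusion, with Theorem \ref{th:discont} supplying total disconnectedness; you merely spell out the steps the paper leaves as citations. The only slip is cosmetic: the level sets $\left(1/(n+1),\,1/n\right]$ exhaust only the values in $(0,1]$, so they should be replaced by $\{x : |\gdiffpm{f}{x}| > 1/n\}$ for the union over $n$ to cover all of $\chi_g^{\pm}(f)$.
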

 \begin{proof}
 	Using the same argument as in the proof of Th. \ref{th:contgderiv} we establish that either $K=0$ allowing for continuity of \gdiffpm{f}{x} or
 	$K \neq 0$ but then \gdiffpm{f}{x} can not be continuous.
 	Furthermore, by Th. \ref{th:contderiv} it follows that $|\chi_g (f)| = 0$. 
 \end{proof}
 \begin{corollary}
 	Under the same notation, let $g(\epsilon) =\epsilon^\beta$, for $\beta \in (0, 1]$.
 	If $|\chi_g (f)|>0$  then $\beta=1$ and $f$ is Lipschitz. 
 \end{corollary}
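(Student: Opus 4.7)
The plan is to derive this corollary as a direct specialization of Theorem~\ref{th:mesgderiv}, by recognizing when the power modulus $g(\epsilon) = \epsilon^\beta$ fails to be strictly sub-additive. The whole content of the statement reduces to the elementary trichotomy of $\epsilon^\beta$ with respect to additivity, which was already sketched in the classification of Section~\ref{sec:classmod}.

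First I would verify that $g(\epsilon) = \epsilon^\beta$ is strictly sub-additive on $(0,\infty)$ precisely for $\beta \in (0,1)$. For $a,b>0$ and $\beta\in(0,1)$, setting $t=b/a$ reduces the inequality $(a+b)^\beta < a^\beta + b^\beta$ to $\varphi(t) := (1+t)^\beta - 1 - t^\beta < 0$ for $t>0$; since $\varphi(0)=0$ and $\varphi'(t) = \beta\bigl((1+t)^{\beta-1} - t^{\beta-1}\bigr) < 0$ by the monotonicity of $s \mapsto s^{\beta-1}$, strict sub-additivity follows. For $\beta = 1$, on the other hand, $g$ is linear, hence additive but not strictly sub-additive.

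Next, I would invoke Theorem~\ref{th:mesgderiv}: if $g$ is strictly sub-additive on $I$, then $|\chi_g^{\pm}(f)| = 0$. Taking the contrapositive and combining with the dichotomy just established, $|\chi_g(f)| > 0$ forces $\beta \notin (0,1)$, and since the hypothesis restricts $\beta$ to $(0,1]$, this leaves $\beta = 1$. Finally, with $\beta = 1$, the condition $\mathcal{D}_g^{\pm} f(x) \neq 0$ on $\chi_g(f)$ triggers by Theorem~\ref{th:aexit} the modulus-bound growth condition \ref{C1} with $g(\epsilon)=\epsilon$, namely $|\deltapm{f}{x}| \leq C_x\,\epsilon$ at each $x \in \chi_g(f)$, which is exactly the (pointwise) Lipschitz property.

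There is no real obstacle: the only technical ingredient is the classical strict sub-additivity of $\epsilon^\beta$ for $\beta < 1$, and everything else is a direct application of Theorems~\ref{th:aexit} and \ref{th:mesgderiv}.
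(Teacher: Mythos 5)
Your proposal is correct and is exactly the argument the paper intends (it states this corollary without proof): the strict sub-additivity of $\epsilon^\beta$ for $\beta\in(0,1)$ combined with the contrapositive of Theorem~\ref{th:mesgderiv} forces $\beta=1$, and Theorem~\ref{th:aexit} then yields the growth bound $|\deltapm{f}{x}|\leq C\,\epsilon$ on $\chi_g(f)$. Your parenthetical caveat is apt: what one actually obtains is the \emph{point-wise} Lipschitz condition at the points of $\chi_g(f)$, which is how the paper uses the term via the identification $\fclass{L}{}\equiv\fcclass{C}{g}$ for $g(x)=x$, not a uniform Lipschitz constant on all of $I$.
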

 
 \begin{corollary}
 	Under the same hypotheses the image set $\mathcal{D}_g^{\pm}f$ is totally disconnected.
 \end{corollary}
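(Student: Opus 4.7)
My plan hinges on combining Theorem~\ref{th:mesgderiv} with Proposition~\ref{prop:omegaequiv}. By Theorem~\ref{th:mesgderiv}, the set $\chi_g^{\pm}(f)$ where the $g$-derivative is nonzero is totally disconnected, and by Theorem~\ref{th:contgderiv} the derivative takes the constant value $0$ on the complement. Consequently the image decomposes as
\[
\mathcal{D}_g^{\pm}f(I) \;=\; \{0\} \;\cup\; \mathcal{D}_g^{\pm}f\bigl(\chi_g^{\pm}(f)\bigr),
\]
and it suffices to show this set contains no nondegenerate interval of $\mathbb{R}$.

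The first step is to use Proposition~\ref{prop:omegaequiv} to constrain the nonzero part of the image. Writing $\mathcal{D}_\omega^{\pm} f(x)=K(x)\,\mathcal{D}_g^{\pm}f(x)$ with $K(x)=\lim_{\epsilon\to 0}\omega_x(\epsilon)/g(\epsilon)$, and recalling that the range of $\mathcal{D}_\omega^{\pm} f$ is the discrete set $\{-1,0,+1\}$, every nonzero value of $\mathcal{D}_g^{\pm}f$ must be of the form $\pm 1/K(x)$ for some $x\in\chi_g^{\pm}(f)$. Hence
\[
\mathcal{D}_g^{\pm}f\bigl(\chi_g^{\pm}(f)\bigr) \;\subseteq\; \bigl\{\pm 1/K(x) : x\in\chi_g^{\pm}(f)\bigr\}.
\]

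The second step is to argue by contradiction. Assume the image contained a nondegenerate interval $J$; by splitting at $0$ and restricting, we may take $J=(\alpha,\beta)$ with $0<\alpha<\beta$ lying inside the set above. For each $y\in(\alpha,\beta)$ pick $x_y\in\chi_g^{\pm}(f)$ with $1/K(x_y)=y$; this exhibits a continuum of distinct points of $\chi_g^{\pm}(f)$ continuously parameterised by $y$. I would then close the argument by invoking the strict sub-additivity of $g$ and the dichotomy of Theorem~\ref{th:contgderiv}: between any two choices $x_{y_1},x_{y_2}$ the continuity set of $\mathcal{D}_g^{\pm}f$ (a dense $G_\delta$ on which the derivative vanishes, as noted in Section~\ref{sec:gderivcont}) is encountered, forcing the parameterisation $y\mapsto x_y$ to jump across $0$-valued points and therefore to be incompatible with $K(x_y)$ varying continuously through $1/J$.

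The main obstacle will be this second step: passing from total disconnectedness of $\chi_g^{\pm}(f)$ in $I$ to total disconnectedness of $\{\pm 1/K(x)\}$ in $\mathbb{R}$. Set-theoretic inclusion is not enough because, in general, a totally disconnected set can be mapped onto an interval (as with the Cantor function); the non-trivial input that rules this out here is the strict sub-additivity of $g$, the very ingredient already used to derive Theorem~\ref{th:contgderiv}. I would expect the formal argument to reduce, via the factorisation $\mathcal{D}_g^{\pm}f(x)=\mathcal{D}_\omega^{\pm}f(x)/K(x)$, to showing that an interval of values $1/K(x_y)$ would force an interior accumulation of $\chi_g^{\pm}(f)$, contradicting Theorem~\ref{th:mesgderiv}.
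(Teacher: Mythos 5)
The paper states this corollary without any proof, so there is nothing to match your argument against; judged on its own terms, your proposal has a genuine gap that you yourself flag but do not close. There are two problems. First, the constraint you extract from Proposition~\ref{prop:omegaequiv} is vacuous: if $\gdiffplus{f}{x}=L\neq 0$ exists, then $\deltaplus{f}{x}=L\,g(\epsilon)+\bigoh{g(\epsilon)}$ and, since $g$ is non-decreasing, $\modcont{x}{\epsilon}=|L|\,g(\epsilon)+\bigoh{g(\epsilon)}$; hence the factor $K(x)=\lim_{\epsilon\to 0}\modcont{x}{\epsilon}/g(\epsilon)$ equals $|L|$, and the identity $\gdiffplus{f}{x}=\pm 1/K(x)$ (in your normalisation) merely restates $L=\pm|L|$. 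The factorisation through the $\omega$-derivative therefore places no restriction whatsoever on the image of $\mathcal{D}_g^{\pm}f$.

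Second, and more seriously, the contradiction you aim for in the final step is not available. Theorem~\ref{th:mesgderiv} gives that $\chi_g^{\pm}(f)$ is totally disconnected and null; neither property forbids ``interior accumulation'' --- the classical Cantor set is totally disconnected, null and perfect, so every one of its points is an accumulation point, and (as you correctly note via the Cantor function) such a set can be mapped onto a full interval. Exhibiting a continuum of points $x_y\in\chi_g^{\pm}(f)$ parameterised by $y\in(\alpha,\beta)$ therefore contradicts nothing: there is no continuity of the choice function $y\mapsto x_y$ to exploit, and the dichotomy of Theorem~\ref{th:contgderiv} concerns continuity of $\mathcal{D}_g^{\pm}f$ as a function on $I$, not the topology of its range. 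What is missing is precisely the mechanism you ask for at the end --- some quantitative link between distinct values of $\mathcal{D}_g^{\pm}f$ and separation (or countability) in the domain, presumably extracted from strict sub-additivity of $g$. Without it, the step from a totally disconnected domain set to a totally disconnected image is exactly the false implication you warned against, and the proof is not complete.
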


 \section{Discussion}\label{sec:disc}
 
   	The relaxation of the differentiability assumption opens new avenues in describing non-linear physical phenomena, for example, using \textit{stochastic calculus}   or the \textit{scale relativity theory} developed by  Nottale \cite{Nottale1989}, which assume fractal character of the space-time geodesics and hence of quantum-mechanical paths.  
   	
 	In contrast to the Riemann-Liouville or Caputo fractional derivatives, the geometrical, and hence physical, interpretation of a modular derivative is easier to establish due to its local character and the demonstrated generalized Taylor-Lagrange property.  
 	That is, presented results demonstrate that the modular derivative provides the best possible local non-linear approximation for its natural modulus of continuity function at the point of interest.
 
 	The desirable properties of the derivatives, such as their continuity,  are established from the more general perspective of the moduli of continuity.   		
 	From the perspective of approximation, derivatives can be viewed as mathematical idealizations of the linear growth. 
 	The linear growth, i.e. the Lipschitz  condition, has special properties, which make it preferred. 
 	Importantly, the statements of the Th. \ref{th:contgderiv} 	and \ref{th:mesgderiv}  give further insight on why the ordinary derivatives are so useful for describing physical phenomena in terms of differential equations.

  \appendix

\section{General definitions and conventions}
\label{sec:definitions}

The term \textit{variable} denotes an indefinite number taken from the real numbers. 
Sets are denoted by capital letters, while variables taking values in sets are denoted by lowercase.

The action of the function is denoted as $f(x)=y$. 
Implicitly the mapping acts on the real numbers: \funct{f}{R}{R}.	 
If a statement of a function \textit{f} fulfils a certain predicate with argument $A$ (i.e. $Pred[A]$) the following short-hand notation will be used   $f \cong Pred[A] $.

Square brackets are used for the arguments of operators, 
while round brackets are used for the arguments of functions.	
The term Cauchy sequence will always be interpreted  as a null sequence.

Everywhere, $\epsilon$ will be considered as a small positive variable.
\begin{definition}[Asymptotic small $\smallO$ notation]
	\label{def:bigoh}
	The notation  $\bigoh{x^\alpha}$ is interpreted as the convention  
	$$
	\llim{x}{0}{ \frac{\bigoh{x^\alpha}}{x^\alpha} } =0 
	$$
	for $\alpha >0 $.
	Or in general terms
	\[
	\bigoh{ g(x)} \Rightarrow 	\llim{x}{0}{ \frac{\bigoh{ g(x)}}{g(x)} } =0 
	\]
	for a decreasing function $g$ on a right-open interval containing 0.
	The notation $\bigohone{x}$ will be interpreted to indicate a Cauchy-null sequence possibly indexed by the variable $x$.
\end{definition}

\begin{definition}\label{def:deltas}
	Define the parametrized difference operators acting on the function $f(x)$ as
	\begin{flalign*}
	\deltaplus{f}{x} & :=  f(x + \epsilon) - f(x),\\
	\deltamin{f}{x}  & :=  f(x) - f(x - \epsilon)  
	\end{flalign*}
	for  the variable $\epsilon>0$. 
	The two operators are referred to as \textit{forward difference} and
	\textit{backward difference}
	operators, respectively.
\end{definition}

\begin{definition}[Anonymous function notation]
	\label{def:pair}
	The notation for the pair $\mu :: \epsilon$ will be interpreted as the implication that 
	if Left-Hand Side (LHS)  is fixed then the Right-Hand Side (RHS) is fixed by the value chosen on the left, 
	i.e. as an anonymous functional dependency
	$\epsilon= \epsilon (\mu) $.
\end{definition}
 
 \begin{definition}[Dini derivatives]\label{def:dini}
 		Define the Dini derivatives as the functions
 	\begin{flalign*}
 	\bar{\mathcal{D}}^{\pm} f (x) & = \limsup\limits_{\epsilon \rightarrow 0} \frac{\deltapm{f}{x}}{\epsilon}  \\
 	\ubar{\mathcal{D}}^{\pm} f (x) & = \liminf\limits_{\epsilon \rightarrow 0} \frac{\deltapm{f}{x}}{\epsilon}
 	\end{flalign*}
 	For the function $f $.
 \end{definition}
 
 \begin{definition}[Baire categories]
 	Let $X$ be a metric space. A set $E \subseteq X $ is of first category 
 	if it can be written as a countable union of nowhere dense sets, 
 	and is of second category if $E$ is not of first category.
 \end{definition}
 For example $\fclass{Q}{}$ and $\emptyset$ are  I category, while the class of continuous functions is of category 0.
 \begin{definition}[Baire function classes]\label{deff:bairef}
 	The function $f : \fclass{R}{} \mapsto \fclass{R}{}$ is called Baire-class I if there is a sequence of continuous functions converging to \textit{f} point-wise. 
 \end{definition}
 
 \begin{definition}[$G_{\delta}$ and $F_\sigma$  sets]\label{def:cat2}
 	Let $X$ be a metric space. 
 	\begin{itemize}
 		\item The set  $E \subseteq X $ is $G_\delta$ if it is countable intersection of open sets, and it is $F_\sigma$ if it is countable union of closed sets.
 		\item The set  $E \subseteq X $ is meagre if it can be expressed as the union of countably many nowhere dense subsets of $X$. 
 		\item 	Dually, a co-meagre set is one whose complement is meagre, or equivalently, the intersection of countably many sets with dense interiors.	 
 	\end{itemize}
 \end{definition}
 
 \section{The First Oscillation Lemma}\label{sec:osclem}
 
 The lemma was stated in \cite{Prodanov2017}:
 \begin{lemma}[Oscillation lemma]
 	\label{th:osc}	 
 	Consider the function $f: X \mapsto Y \subseteq \fclass{R}{}$.
 	Suppose that $I_{+} =\left[x, x+ \epsilon \right] \subseteq X$,  
 	$I_{-} =\left[ x - \epsilon, x \right] \subseteq X$, respectively. 
 	
 	If  $\mathrm{osc^{+}} [f] (x)=0 $ then  $f$  is {right-continuous}  at x. Conversely, if $f$  is right-continuous at x  then $  \mathrm{osc^{+}} [f] (x)=0 $.	     	
 	If  $\mathrm{osc^{-}} [f] (x)=0 $ then  $f$  is {left-continuous}   at x. Conversely, if $f$  is left-continuous at x  then $\mathrm{osc^{-}} [f] (x)=0 $.  	 
 	That is, 
 	\[
 	\llim{\epsilon}{0}{	\oscpm{f}{x}} =0 \Longleftrightarrow \llim{ \epsilon}{0}{ f(x \pm \epsilon)}= f(x)
 	\]
 \end{lemma}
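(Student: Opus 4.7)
The plan is to treat the right-sided case in detail and obtain the left-sided case by the substitution $\epsilon \to -\epsilon$ (equivalently, by reflection $f(x) \mapsto f(-x)$), since the two cases are formally symmetric. Throughout I write $I_\epsilon = [x, x+\epsilon]$ and use the fact that $f(x)$ belongs to $I_\epsilon$, so that $\inf_{I_\epsilon} f \leq f(x) \leq \sup_{I_\epsilon} f$ for every $\epsilon > 0$. Note also that $\sup_{I_\epsilon} f$ is non-increasing and $\inf_{I_\epsilon} f$ is non-decreasing as $\epsilon \to 0^+$, so both one-sided limits exist (possibly $\pm\infty$), and consequently $\oscplus{f}{x}$ admits a limit in the extended sense.

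For the forward implication, suppose $\mathrm{osc}^{+}[f](x) = 0$. Given $\mu > 0$, choose $\delta > 0$ such that for every $0 < \epsilon \leq \delta$ one has $\sup_{I_\epsilon} f - \inf_{I_\epsilon} f \leq \mu$. For any $y \in (x, x+\delta]$, pick $\epsilon = y - x$; then both $f(y)$ and $f(x)$ lie in $[\inf_{I_\epsilon} f, \sup_{I_\epsilon} f]$, whence $|f(y) - f(x)| \leq \mu$. This is exactly right-continuity of $f$ at $x$.

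For the converse, suppose $f$ is right-continuous at $x$. Fix $\mu > 0$ and choose $\delta > 0$ such that $|f(y) - f(x)| < \mu/2$ for all $y \in [x, x+\delta]$. Then for every $0 < \epsilon \leq \delta$, both $\sup_{I_\epsilon} f$ and $\inf_{I_\epsilon} f$ lie in $[f(x) - \mu/2, f(x) + \mu/2]$, so
\[
\sup_{I_\epsilon} f - \inf_{I_\epsilon} f \leq \mu,
\]
and since $\mu$ was arbitrary, $\oscplus{f}{x} \to 0$ as $\epsilon \to 0^+$. A small subtlety to handle cleanly is that $\sup$ and $\inf$ may or may not be attained inside $I_\epsilon$; one bypasses this by taking approximating sequences $y_n, z_n \in I_\epsilon$ with $f(y_n) \to \sup_{I_\epsilon} f$ and $f(z_n) \to \inf_{I_\epsilon} f$ and applying the $\mu/2$ bound term-by-term before passing to the limit.

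The left-continuous statement is obtained line-for-line by replacing $[x, x+\epsilon]$ with $[x-\epsilon, x]$. The combined biconditional displayed at the end of the lemma is then merely the simultaneous statement of the two directional equivalences. I do not anticipate a genuine obstacle here; the only point that requires a moment of care is justifying that the supremum/infimum of $f$ on $I_\epsilon$ can be compared to $f(x)$ without assuming they are attained, which the approximation argument above handles.
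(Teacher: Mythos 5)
Your proposal is correct and follows essentially the same route as the paper's own proof: the forward direction bounds $|f(y)-f(x)|$ by the oscillation since both values lie between $\inf_{I_\epsilon} f$ and $\sup_{I_\epsilon} f$, and the converse uses the $\mu/2$ triangle-inequality argument before identifying the supremum and infimum. Your explicit handling of the case where the supremum and infimum are not attained (via approximating sequences) is a welcome tightening of a step the paper treats somewhat informally.
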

 Then the negation of the statement is also true.
 \begin{corollary}
 	The following two statements are equivalent
 	\[
 	\llim{\epsilon}{0}{	\oscpm{f}{x}} >0 \Longleftrightarrow \llim{ \epsilon}{0}{ f(x \pm \epsilon) \neq f(x)}
 	\]
 \end{corollary}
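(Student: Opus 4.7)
The plan is to derive this corollary as a direct logical consequence of the First Oscillation Lemma (Lemma \ref{th:osc}), using essentially the contrapositive. Only one preliminary observation is required, together with care about interpreting the notation on the right-hand side.

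First I would observe that the directed oscillation $\oscpm{f}{x}$ is monotone non-increasing as $\epsilon \searrow 0$: shrinking the interval $[x, x\pm\epsilon]$ can only decrease the supremum of $f$ and raise the infimum, hence can only decrease their difference. Being bounded below by zero, this quantity admits a limit in $[0,\infty)$ by the monotone convergence principle. Consequently the value $\llim{\epsilon}{0}{\oscpm{f}{x}}$ is well defined and satisfies the exhaustive dichotomy ``equals $0$'' versus ``strictly positive''.

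Next I would invoke Lemma \ref{th:osc} in its stated biconditional form,
\[
\llim{\epsilon}{0}{\oscpm{f}{x}} = 0 \Longleftrightarrow \llim{\epsilon}{0}{f(x\pm\epsilon)} = f(x),
\]
and negate both sides. Logical negation preserves the biconditional, yielding
\[
\llim{\epsilon}{0}{\oscpm{f}{x}} \neq 0 \Longleftrightarrow \llim{\epsilon}{0}{f(x\pm\epsilon)} \neq f(x),
\]
and by the dichotomy of the previous paragraph ``$\neq 0$'' on the left is the same as ``$>0$'', which is precisely the desired statement.

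The only subtle point, and the one I expect to be the main obstacle, is the interpretation of the notation on the right: $\llim{\epsilon}{0}{f(x\pm\epsilon) \neq f(x)}$ must be read as the failure of one-sided continuity at $x$, covering both the case in which the one-sided limit exists but differs from $f(x)$ and the case in which it fails to exist as a finite number. Either outcome is the exact complement of the condition appearing on the right of Lemma \ref{th:osc}, so no additional argument is needed once this reading is made explicit.
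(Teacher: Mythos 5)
Your proposal is correct and matches the paper's own route: the paper justifies this corollary with the single remark that it is ``the negation of the statement'' of the First Oscillation Lemma (the proof printed after the corollary is in fact the lemma's proof). Your added observation that $\oscpm{f}{x}$ is non-negative and monotone in $\epsilon$, so that its limit exists and ``$\neq 0$'' coincides with ``$>0$'', merely makes explicit a step the paper leaves implicit.
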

 
 \begin{proof}
 	\begin{description}
 		\item[Forward case]  
 		Suppose that  $\osc{f}{x}{}{+} =0 $. 
 		Then there exists a pair $  \mu::\delta, \ \delta \leq \epsilon $, such that
 		$
 		\osc{f}{x}{\delta}{+} \leq \mu
 		$.
 		Therefore, $f$ is bounded in $I_{+}$.
 		Since $\mu$ is arbitrary we select
 		$x^\prime $, such that 
 		\[
 		| f(x^\prime) - f(x) | =\mu^\prime \leq \mu
 		\]
 		and set $|x-  x^\prime|= \delta^\prime$. Since $\mu$ can be made arbitrary small so does $\mu^\prime$.
 		Therefore,  $f$ is (right)-continuous at $x$. 
 		
 		
 		\item[Reverse case]
 		If \textit{f} is (right-) continuous on \textit{x} then there exist a pair 
 		$\mu :: \delta$ such that		 
 		\begin{flalign*}
 		\left|  f(x^{\prime}) - f(x)\right| & < \mu/2,  \ \  \left| x^{\prime}  -  x \right|  < \delta/2 \\
 		\left|  f(x) - f(x^{\prime\prime}) \right|  & < \mu/2,  \ \ \left|   x -  x^{\prime\prime}    \right| < \delta/2
 		\end{flalign*}
 		Then we add the inequalities and by the triangle inequality we have
 		\begin{flalign*}
 		\left|  f(x^{\prime}) - f(x^{\prime\prime})\right| & \leq \left|  f(x^{\prime}) - f(x)\right|+ \left|  f(x) - f(x^{\prime\prime}) \right|  < \mu \\ 
 		\left| x^{\prime}  -  x^{\prime\prime}  \right| & \leq \left| x^{\prime}  -  x \right| + \left|   x -  x^{\prime\prime}    \right| < \delta
 		\end{flalign*}
 		However, since $ x^{\prime}$ and $x^{\prime\prime}$ are arbitrary we can set the former to correspond to the minimum and the latter to the maximum of \textit{f} in the interval.
 		Therefore, by the least-upper-bond property we can identify 
 		$ f(x^{\prime}) \mapsto \inf_\epsilon f (x) $, $f(x^{\prime\prime}) \mapsto \sup_\epsilon f (x) $.		
 		Therefore, 
 		$ \mathrm{osc}_{\delta}^{+} [f] (x) < \mu $  for   $ \left|  x^{\prime} - x^{\prime\prime}  \right| < \delta $ (for the pair $\mu :: \delta$ ).
 		Therefore, the limit is $\mathrm{osc^{+}} [f] (x)=0 $.
 	\end{description}
 	The left case follows by applying the right case, just proved, to the mirrored image of the function: $ f(-x)$.
 \end{proof}
 
	 \bibliographystyle{plain} 
	 \bibliography{fractint}
	\end{document}